\def\CC{\mathscr{C}}
\def\CC{\mathcal{C}}
\def\CD{\mathcal{D}}
\def\CE{\mathcal{E}}
\def\CF{\mathcal{F}}
\def\CG{\mathcal{G}}
\def\CH{\mathcal{H}}
\def\CN{\mathcal{N}}
\def\CO{\mathcal{O}}
\def\CS{\mathcal{S}}
\def\C{{\mathbf C}}
\def\E{{\mathbf E}}
\def\J{{\mathbf J}}
\def\N{{\mathbf N}}
\def\P{\mathbf{P}}
\def\R{\mathbf{R}}
\def\T{\mathbf{T}}
\def\Z{\mathbf{Z}}
\def\i{\bold i}
\def\j{\bold j}
\def\k{\bold k}
\newcommand{\Exp}{\mathbf{E}}
\renewcommand{\R}{\mathbb{R}}
\renewcommand{\C}{\mathbb{C}}
\renewcommand{\N}{\mathbb{N}}
\renewcommand{\T}{\mathbb{T}}
\renewcommand{\Z}{\mathbb{Z}}
\renewcommand{\E}{\mathbb{E}}
\renewcommand{\P}{\mathbb{P}}
\renewcommand{\J}{\mathbb{J}}
\newcommand{\cA}{\mathcal{A}}
\newcommand{\cB}{\mathcal{B}}
\newcommand{\cD}{\mathcal{D}}
\newcommand{\cF}{\mathcal{F}}
\newcommand{\cH}{\mathcal{H}}
\newcommand{\cK}{\mathcal{K}}
\newcommand{\cL}{\mathcal{L}}
\newcommand{\cM}{\mathcal{M}}
\newcommand{\cN}{\mathcal{N}}
\newcommand{\dd}{\mathrm{d}}      
\newcommand{\Id}{\mathrm{Id}}
\newcommand{\1}{\mathds{1}}
\newcommand{\ua}{\uparrow}
\newcommand{\vphi}{\varphi}
\def\sym{{\mathrm{sym}}}
\colorlet{darkblue}{blue!90!black}
\colorlet{darkred}{red!90!black}
\colorlet{darkgreen}{green!50!black}
\colorlet{darkyellow}{yellow!90!black}
\def\one{\mathrm{(I)}}
\def\two{\mathrm{(II)}}
\def\three{\mathrm{(III)}}
\def\four{\mathrm{(IV)}}
\newcommand{\half}{\frac{1}{2}}
\newcommand{\phalf}{^\half}
\def\hot{\otimes^n h}
\def\hotm{\otimes^{n-1} h}
\def\hotmm{\otimes^{n-2} h}
\newcommand{\thec}{c}			
\def\indN#1{{	\J^N_{#1}	}}
\newcommand{\fock}{\Gamma L^2}	
\def\sint{I}					
\def\wc{\CH}					
\def\qvar#1{\langle #1 \rangle}	
\def\swn{{	\eta	}}
\def\filt{{ \CG	}}		
\newcommand{\gen}{\cL^N}
\newcommand{\gens}{\cL_0^N}
\newcommand{\gensy}{\cL_0}
\newcommand{\gena}{\cA^N}
\newcommand{\genap}{\cA^N_+}
\newcommand{\genam}{\cA^N_-}
\newcommand{\gensyf}{\mathfrak{L}_0}
\newcommand{\genapf}{\mathfrak{A}^N_+} 
\newcommand{\genamf}{\mathfrak{A}^N_-} 
\newcommand{\nonlin}{\cK^N}
\newcommand{\inonlin}{\cB^N}
\newcommand{\inonlintilde}{\tilde\cB^N}
\newcommand{\nonlintilde}{\tilde\cK^N}
\newcommand{\const}{\lambda_N}
\newcommand{\donst}{\nu_N}
\def\donsth{\donst\phalf}
\def\energy{\CE^N}
\def\martf{M^N}			
\def\martb{\bar M^N}	
\def\pois{H^N}			
\def\poisl{K^N}			
\newcommand{\martnl}{\cM^N}  		
\def\she{X^N} 	
\title{2D Anisotropic KPZ at stationarity: scaling, tightness and non triviality}
\let\runtitle\@title
\date{%
    \today
    }
\begin{document}

\maketitle

\vspace{-2cm}

\noindent{\large \bf Giuseppe Cannizzaro$^1$,  Dirk Erhard$^2$, Philipp Sch\"onbauer$^3$}
\newline

\noindent{\small $^1$University of Warwick, UK\\%
    $^2$Universidade Federal da Bahia, Brazil\\%
    $^3$Imperial College London, UK\\}

\noindent\email{giuseppe.cannizzaro@warwick.ac.uk, 
erharddirk@gmail.com, \\
p.schoenbauer@imperial.ac.uk}
\newline

\begin{abstract}
In this work we focus on the two-dimensional anisotropic KPZ (aKPZ) equation, which is formally given by  
\begin{equation*}
\partial_t h =\frac{\nu}{2}\Delta h + \lambda((\partial_1 h)^2 - (\partial_2 h)^2) + \nu^\half \xi,
\end{equation*}
where $\xi$ denotes a noise which is white in both space and time, and $\lambda$ and $\nu$ are positive constants. 
Due to the wild oscillations of the noise and the quadratic 
nonlinearity, the previous equation is classically ill-posed. It is not possible
to linearise it via the Cole-Hopf transformation and the pathwise techniques for singular SPDEs 
(the theory of Regularity Structures by M. Hairer or the paracontrolled distributions approach of M. Gubinelli, P. Imkeller, 
N. Perkowski) are not applicable. 
In the present work, we consider a regularised version of aKPZ which preserves its invariant measure.  We 
show that in order to have subsequential limits once the regularisation is removed, 
it is necessary to suitably renormalise $\lambda$ and $\nu$. 
Moreover, we prove that, in the regime suggested by the (non-rigorous) renormalisation group computations
of [D.E. Wolf, ``Kinetic roughening of vicinal surfaces'', Phys. Rev. Lett., 1991], i.e. $\nu$ constant and the coupling 
constant $\lambda$ converging to $0$ as the inverse of the square root logarithm, 
any limit differs from the solution to the linear equation obtained by simply dropping the nonlinearity in aKPZ.   
%
%
%
%
%
%
%
%
%
%
%
%
%
%
\end{abstract}

\bigskip\noindent
{\it Key words and phrases.} Anisotropic KPZ equation, criticality, renormalisation, energy solution.

\tableofcontents

\section{Introduction}

The KPZ equation is a (singular) stochastic partial differential equation (SPDE), whose formal expression is 
\begin{equation}\label{eq:anKPZ}
\partial_t h= \nu \Delta h + \langle \nabla h, Q\nabla h\rangle + \sqrt{D}\xi,
\end{equation}
where $\xi$ is a space-time white noise in spatial dimension $d$, $Q$ is a $d\times d$-matrix, and $\nu$ and 
$D$ are positive constants. The importance of this equation stems from the fact that it encodes (via $Q$, $\nu$ and $\lambda$) 
the universal features of randomly evolving surfaces and it is supposed to arise as the limit of a large class of properly 
rescaled particle systems. The difficulty in establishing its universality is already on the level of the equation since, from an 
analytic viewpoint, it is ill-posed in any dimension. This is due to the fact that the noise $\xi$ is too irregular for the  
non-linear term to be canonically defined. 

The only dimension in which a rigorous solution theory has been established (for any value of the constants $\nu,\,Q$ and $D$)
and the universality claim corroborated, is $d=1$. There are by now different approaches that lead to well-posedness: 
the Cole-Hopf transformation that turns~\eqref{eq:anKPZ} into the {\it linear} multiplicative stochastic heat equation~\cite{BG}; 
the martingale approach which leads to the notion of energy solution~\cite{GubinelliJara2012,GPuni}; pathwise techniques, 
namely rough paths~\cite{KPZ}, regularity structures~\cite{Hai} and paracontrolled calculus~\cite{Para,KPZreloaded}. 
In particular, the theory of regularity structures and paracontrolled calculus additionally apply to a much larger class of equations
and, from their introduction, the field of (singular) SPDEs has experienced a tremendous growth. 
That said, their applicability is restricted to those equations that are {\it subcritical} which heuristically means that, at small 
scales, the nonlinearity does not matter much and the solution behaves (regularity wise) as the linear part of the equation. 
For~\eqref{eq:anKPZ}, this is the case only for $d=1$, while in $d=2$ and $d\geq3$ (which are said to be the {\it critical} and 
{\it supercritical} regimes respectively) the pathwise approaches break down. 

Only recently the critical and supercritical regimes started to be investigated. In the latter case 
physicists (see~\cite{Kardar}) predict that, for the parameters $\nu,\,Q$ and $D$ in a suitable window, 
the non-linearity should not matter much at large scales, so that, 
taking a smooth noise, rescaling the height function $h$ according to $h^\eps(t,x)\eqdef \eps^{\frac{d}{2}}h(t/\eps^2,x/\eps)$
and subtracting the average growth, the fluctuations should be the same as those of the solution of the linear stochastic 
heat equation. 
Partial results in this direction have been established in the case $Q=\lambda\, \Id_d$, for $\Id_d$
being the $d\times d$ identity matrix and the coupling constant $\lambda>0$ sufficiently small, first by~\cite{Magnen} 
via renormalisation group techniques and later 
by~\cite{DGRZ,CCM1,CCM2} (see also~\cite{Gu2018b} for the case of the multiplicative stochastic heat equation)
\footnote{In the supercritical regime, a phase transition is expected depending on $\lambda$ but the exact value at which 
the transition happens is still unknown.}.

The critical case, $d=2$, shows an even deeper structure. Indeed, already from the physics perspective
this regime is more delicate since finer details of the equation, and in particular the sign of $\det Q$, 
might influence its large scale dynamics. The importance of the matrix $Q$ can be understood from a microscopic 
viewpoint. Indeed, heuristically speaking, it is expected that the macroscopic 
{\it average behaviour} of a microscopic surface is given by the solution of a PDE of the form 
\begin{equation}\label{e:hydro}
\partial_t u=v(\nabla u)
\end{equation}
where $v$ is a deterministic scalar valued map depending on the specific (microscopic) features of the model at hand. 
Now, since~\eqref{eq:anKPZ} should represent the (universal) fluctuations of the surface {\it around} its hydrodynamic limit,
a second order expansion of~\eqref{e:hydro} leads to the identification of $Q$ with the Hessian of $v$. 
Through (non-rigorous) renormalisation group techniques, Wolf showed in~\cite{W91} that~\eqref{eq:anKPZ} gives rise 
to {\it two different} universal behaviours depending on the sign of $\det Q$. 
If $\det Q > 0$, the so called {\it isotropic KPZ class}, then the fluctuations should grow in time as $t^{\beta}$ for some $\beta > 0$, 
and the spatial correlation should grow as the distance to the power $\frac{2\beta}{(\beta + 1)}$, see~\cite{Kardar} 
while for $\det Q \leq 0$, the {\it anisotropic KPZ class}, the non-linearity should morally play no role and the behaviour 
should be the same as the solution to the stochastic heat equation in dimension 2. Note that the latter in particular means 
that the value of $\beta$ mentioned above should be equal to zero, and the correlations explode logarithmically. 
We emphasise that it is nowhere stated that anisotropic KPZ equation
\emph{coincides} with the stochastic heat equation, only the correlations should be of the same order.
This is though expected, especially in view of the works~\cite{BCF,BCT}, where the scaling limit of the models there 
considered is obtained via a limit transition, namely a first limit reduces the models to a system of linear SDEs and, thanks to a 
second limit, the linear stochastic heat equation is derived. 

Numerically, the conjecture for the isotropic case was for instance confirmed in~\cite{Tang} for two specific models 
where it turned out to be the case that $\beta \approx 0.24$, while the anisotropic was studied in~\cite{Healy}.

Mathematically an even deeper structure has been found for $\det Q>0$. Indeed, upon choosing $Q=\lambda \Id_2$, 
and $\lambda\sim \sqrt{\hat\lambda/\log N}$, where $N$ is a regularisation parameter, 
the work of Caravenna, Sun and Zygouras~\cite{CSZ1} shows that there is a phase transition (for the one point distribution) at 
$\hat\lambda=2\pi$. Later in~\cite{CD19}, for $\hat\lambda>0$ sufficiently close to $0$, 
it was shown that a sequence of 
approximations of~\eqref{eq:anKPZ} is tight, result then improved in~\cite{CSZ}, where not only tightness, 
but also uniqueness and characterisation of the limit was obtained in the whole
interval $\hat\lambda\in(0,2\pi)$. They proved that the limit is given by the solution of a stochastic heat equation, different from
the one obtained by simply dropping the nonlinear term in~\eqref{eq:anKPZ} (see also~\cite{Gu}). 
\newline

In the present paper, we will focus on the anisotropic KPZ class. For numerous (discrete) models the Hessian of $v$ 
appearing in~\eqref{e:hydro} has been computed, see for example~\cite{Borodin2014, Toninelli2017, BT}, and its determinant 
proven to be negative. Precise results were obtained concerning the hydrodynamic behaviour and the convergence 
of the invariant measure to the Gaussian free field (see~\cite{Borodin2014, Legras2019}).
What hinders still the progress is that the statements mentioned so far on the fluctuations have been established at {\it fixed} time 
and it is not clear how one can show that the time fluctuations are really of the logarithmic order 
as expected (some advances have been
made in~\cite{Toninelli2017, CT} where a $\log t$ upper bound has been obtained for the time increment). 

To shed some light on the behaviour as a \emph{process} for a model belonging to the anisoptropic KPZ class, 
we will be working directly at the level of the equation~\eqref{eq:anKPZ}. We make a specific choice of the matrix $Q$, 
i.e. $Q=\text{diag}(1,-1)$, and of initial condition, i.e. we start from the invariant measure, that with this choice of $Q$
can be shown to exist (see Lemma~\ref{lem:invariant} below). 
The aforementioned paper of Wolf suggests that in order to see the universal fluctuations
it is necessary to renormalise the coupling constants. Therefore, we were lead to study the following family of 
approximations
\begin{align}\label{e:kpz:reg}
\partial_t h^N = \frac{\donst}{2} \Delta h^N
+
\const \Pi_N \Big((\Pi_N \partial_1 h^N)^2 - (\Pi_N \partial_2 h^N)^2\Big) + \donst^{\frac{1}{2}}\xi\,,\qquad h^N_0=\tilde\eta
\end{align}
in which
\begin{itemize}[noitemsep]
\item[-] $\tilde\eta$ is a Gaussian free field on $\T^2$, i.e. a Gaussian field whose covariance function is
\begin{equ}
\E[\tilde\eta(\varphi)\tilde\eta(\psi)]=\langle(\Delta)^{-1}\varphi,\psi\rangle_{L^2(\T^2)} \,,\qquad\text{for all $\varphi,\psi\in H^{-1}(\T^2)$,}
\end{equ}
and it is assumed that the $0$ Fourier mode of $\varphi$ and $\psi$ is $0$.  
\item[-] $\xi$ is a space time white noise on $\R_+\times\T^2$ independent of $\tilde\eta$, 
i.e. a Gaussian field whose covariance function is 
\begin{equ}
\Exp[\xi(\varphi)\xi(\psi)]=\left\langle\varphi-\int_{\T^2}\varphi(x)\dd x,\psi-\int_{\T^2}\psi(x)\dd x\right\rangle_{L^2(\R_+\times\T^2)}
\end{equ}
for all $\varphi,\psi\in L^2(\R_+\times\T^2)$, 
\item[-] $\Pi_N$ is the operator acting in Fourier space by cutting the modes
higher that $N$, i.e. 
\begin{equ}
(\Pi_N w)_k\eqdef w_k \mathds{1}_{|k|_\infty \le N}
\end{equ} 
and $w_k$ is the $k$-th Fourier component of $w$,
\item[-] $\donst$ and $\const$ are positive constants allowed to depend on the regularisation parameter $N$. 
\end{itemize} 
In Theorem~\ref{thm:tight}, which is a consequence of Theorem \ref{thm:tightness} and 
Theorem \ref{thm:tightnessAKPZ} below, we identify a {\it family} of different scalings for $\const$ and $\donst$ for which 
the sequence $h^N$ admits subsequential limits in Besov-H\"older spaces of suitable regularity (see~\eqref{def:Besov}
for a precise definition of these spaces).  

\begin{theorem}\label{thm:tight}
Let $N\in \N$ and consider the sequence of equations in~\eqref{e:kpz:reg} 
started from the invariant measure, given by the Gaussian free field $h^N(0)=\tilde\eta$.
Then, provided that
\begin{equation}\label{e:ScalingRCintro}
\const\donst^{-\half}\sim \sqrt{\frac{ 1}{\log N}}\,,\qquad \text{as $N$ tends to $\infty$,}
\end{equation}
the sequence $\{h^N\}_N$ is tight in $C_T^\gamma\CC^\alpha$ for any $\gamma<1/2$ and $\alpha<-1$. Moreover, if 
$\donst=1$ for all $N\in\N$, then tightness holds for any $\alpha<0$ and $\gamma=0$. 
\end{theorem}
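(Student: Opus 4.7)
My strategy hinges on two ingredients: the invariance of the Gaussian free field $\tilde\eta$ established in Lemma~\ref{lem:invariant}, which makes $h^N$ stationary with $h^N(t)\stackrel{d}{=}\tilde\eta$ for every $t\ge 0$, and an It\^o-trick / Kipnis--Varadhan bound on the drift that stationarity enables. The marginal identity immediately yields
\begin{equ}
\sup_{N\in\N,\,t\in[0,T]}\Exp\bigl[\|h^N(t)\|_{\CC^\alpha}^p\bigr]<\infty,\qquad \alpha<0,\ p\ge 1,
\end{equ}
via standard Besov-space computations in the second Wiener chaos of the GFF. For the second assertion of the theorem, with $\donst\equiv 1$, this marginal bound combined with continuity of $t\mapsto h^N(t)$ in $\CC^\alpha$ (coming from the drift estimate below) already delivers tightness in $C([0,T];\CC^\alpha)$ for any $\alpha<0$, giving the $\gamma=0$ statement.

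For the Hölder-in-time statement I would apply Kolmogorov's criterion to Littlewood--Paley blocks of $h^N(t)-h^N(s)$ in combination with the mild formulation
\begin{equation}
h^N(t)-h^N(s)=\bigl(P^N_{t-s}-\Id\bigr)h^N(s)+\const\!\int_s^t P^N_{t-u}\Pi_N\bigl[(\Pi_N\partial_1 h^N)^2-(\Pi_N\partial_2 h^N)^2\bigr]du+\donst^\half\!\!\int_s^t P^N_{t-u}\,d\xi(u),
\end{equation}
where $P^N_t\eqdef e^{t\donst\Delta/2}$. The first and third contributions are Gaussian; explicit Fourier computations against the stationary Ornstein--Uhlenbeck covariance show that their $\CC^\alpha$-norm is of order $|t-s|^{1/2}$ (modulo a logarithm) as soon as $\alpha<-1$, which is precisely the source of the spatial threshold $\alpha<-1$ in the first half of the statement.

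The heart of the matter is the middle term driven by the anisotropic nonlinearity. Stationarity of $h^N$, together with the fact that the symmetric part $\gens$ of the generator $\gen$ is the Ornstein--Uhlenbeck generator associated with the GFF, allows one to invoke the It\^o trick: for every $F$ in the second Wiener chaos and every $p\ge 2$,
\begin{equation}
\Exp\Bigl[\,\bigl|\!\int_s^t F(h^N(u))\,du\bigr|^{\,p}\Bigr]^{1/p}\le C_p\,|t-s|^{1/2}\,\bigl\langle F,(-\gens)^{-1}F\bigr\rangle^{1/2}.
\end{equation}
Taking $F$ to be the pairing of $\const\Pi_N[(\Pi_N\partial_1 h)^2-(\Pi_N\partial_2 h)^2]$ with a Littlewood--Paley block localised at Fourier frequency $q$ then reduces tightness of the drift to the single analytic bound
\begin{equation}\label{e:PlanKey}
\const^2\!\!\sum_{\substack{k\in\Z^2\\|k|,|q-k|\le N}}\!\!\frac{\bigl(k_1(q_1-k_1)-k_2(q_2-k_2)\bigr)^2}{|k|^2|q-k|^2\,\donst\bigl(|k|^2+|q-k|^2\bigr)}\;\lesssim\;\frac{\const^2}{\donst}\log N.
\end{equation}

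The principal obstacle is precisely~\eqref{e:PlanKey}. A naive estimate that replaces the numerator by $|k|^2|q-k|^2$ gives an $N^2$-divergent sum, which is hopeless. The saving cancellation is intrinsically anisotropic and is transparent at $q=0$: the numerator becomes $(k_2^2-k_1^2)^2=|k|^4\cos^2(2\theta_k)$, whose average over the angle $\theta_k$ of $k$ equals $\tfrac12$, so the sum collapses to $\sum_{|k|\le N}|k|^{-2}\sim\log N$. For $q\ne 0$ an analogous $\log N$-bound, uniform in $q$, is obtained by exploiting the oddness of the integrand under $k\mapsto q-k$ together with the factor $(|k|^2+|q-k|^2)^{-1}$ from $(-\gens)^{-1}$. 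The scaling~\eqref{e:ScalingRCintro}, i.e.\ $\const^2/\donst\sim 1/\log N$, is calibrated exactly to absorb this logarithm and render the right-hand side of~\eqref{e:PlanKey} bounded in $N$. Summing the Littlewood--Paley contributions and invoking Kolmogorov's criterion then assembles the pieces into tightness in $C^\gamma_T\CC^\alpha$ for every $\gamma<1/2$ and $\alpha<-1$.
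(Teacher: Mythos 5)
Your route for the first assertion is essentially the paper's: stationarity from Lemma~\ref{lem:invariant}, the Kipnis--Varadhan/It\^o-trick bound on the time integral of the nonlinearity via the explicit Poisson solution, the mild formulation tested against Littlewood--Paley blocks, and Kolmogorov's criterion, with the $\log N$ from the energy estimate absorbed by the scaling~\eqref{e:ScalingRCintro}. The genuine gap is in the second assertion ($\donst=1$, $\gamma=0$, $\alpha<0$). Uniform bounds on the fixed-time marginals together with pathwise continuity of each $h^N$ do \emph{not} imply tightness in $C_T\CC^\alpha$: you still need an equicontinuity estimate, uniform in $N$, of the form $\Exp[\|h^N(t)-h^N(s)\|_{\CC^{\alpha'}}^p]\lesssim |t-s|^{1+\epsilon}$ for some $\alpha'>\alpha$ (or an Aldous-type bound), and your own increment estimates as stated cap the spatial regularity at $\alpha<-1$, since you only record the $|t-s|^{1/2}$ bound. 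What is missing is the space--time trade-off the paper exploits: bounding $1-e^{-\donst|k|^2\tau}\lesssim(\donst|k|^2\tau)^{\kappa}$ for arbitrarily small $\kappa>0$, which yields increments of order $|t-s|^{\kappa/2}$ at spatial regularity approaching $0$ for $h^N$; and this is precisely where $\donst=1$ enters, because after this interpolation the drift prefactor behaves like $\const\donst^{-1+\kappa/2}(\log N)^{1/2}\sim\donst^{(\kappa-1)/2}$, which is bounded for small $\kappa$ only if $\donst$ does not vanish (otherwise one is forced to $\kappa=1$, i.e.\ back to $\gamma<1/2$, $\alpha<-1$). Note also that since your drift term carries the semigroup $P^N_{t-u}$ inside the time integral, you need the time-dependent version of the It\^o trick, the analogue of~\eqref{e:itoconvolution}, not the plain bound you quote.

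Separately, your assessment of the key estimate~\eqref{e:PlanKey} is off, though in a harmless direction: with the factor $(|k|^2+|q-k|^2)^{-1}$ coming from $(\lambda-\gens)^{-1}$ already present, the ``naive'' bound $(k_1(q_1-k_1)-k_2(q_2-k_2))^2\le |k|^2|q-k|^2$ does not give an $N^2$-divergence; it gives $\sum_{0<|k|\le N}(|k|^2+|q-k|^2)^{-1}\lesssim \log N$ uniformly in $q$, which is exactly the right-hand side you need, and this is how the paper proves Lemma~\ref{lem:energy-estimate}. No anisotropic cancellation is required for the upper bound, and the mechanism you invoke does not exist as stated (the summand is symmetric, not odd, under $k\mapsto q-k$); the angular cancellation $\int_0^{2\pi}\cos^2(2\theta)\,\dd\theta=\pi$ is what the paper uses for the \emph{lower} bound and non-triviality (Proposition~\ref{p:LowerBound}, Proposition~\ref{p:MartConv}), not for tightness. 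Two smaller points: the GFF marginal lives in the first, not second, Wiener chaos; and tightness of $h^N$ (as opposed to $\nabla h^N$) requires the zero Fourier mode, which your formulation does capture through the $q=0$ case of the It\^o-trick bound, as the paper does separately in Theorem~\ref{thm:tightnessAKPZ}.
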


Let us point out some aspects of the previous theorem, which mark the difference from the results mentioned above on 
critical SPDEs. Notice that, for the equation we are considering, there is {\it no Cole-Hopf transform} which could
turn~\eqref{e:kpz:reg} into a linear SPDE and therefore no explicit representation of the solution is available. In other words,
we are forced to work directly with the equation itself and make sense of its nonlinearity. 
Moreover (at least in the case $\donst=1$ and $\const$ satisfies~\eqref{e:ScalingRCintro}), we obtain tightness for the 
sequence in the space with {\it optimal} regularity. This can be seen by power counting since $\xi$ has regularity at most $-2$
and the regularising effect of the Laplacian gains $2$. 
At last, notice that, according to~\eqref{e:ScalingRCintro}, we are allowed to take $\const=\donst=(\log N)^{-1}$. 
This is interesting since, by the scaling properties
of $\xi$, it corresponds to the situation in which one starts from the original equation ($\const=\donst=1$) and 
looks at times of order $(\log N)^{-1}$, i.e. $h^N(t,x)\eqdef h(t/\log N,x)$. In other words, we do not modify the equation
but identify the time scale at which we (should) see the relevant behaviour. 
\newline

That said, the previous statement does not rule out the possibility that the limit is trivial, i.e. it is simply constant in time
or reduces to the solution of an equation in which the summands containing a vanishing factor disappear, which would mean 
that the strength at which they converge to $0$ is too strong. 

Upon choosing $\donst=1$, we are indeed able to show that any limit point has finite non-zero energy, which 
in particular implies that 
it is not trivial. Here, we say that a stochastic process $\{Y_t\}_{t\in [0,T]}$ has {\it finite energy} if 
\begin{equation}\label{eq:energy}
\sup_{\pi=\{t_i\}_i}\E\left[\sum_{i} (Y_{t_{i+1}}-Y_{t_i})^2\right]<\infty
\end{equation} 
where the supremum is over all the partitions $\pi$ of $[0,T]$. 

\begin{theorem}\label{thm:nontriviality}
In the setting of Theorem~\ref{thm:tight} assume that $\nu_N=1$. Then, for any test function $\varphi$, any limit point of
the sequence 
\begin{equ}
\left\{\int_0^t\const \Pi_N \Big((\Pi_N \partial_1 h^N)^2 - (\Pi_N \partial_2 h^N)^2\Big)(s,\varphi)\dd s\right\}_{t}
\end{equ}
is a process with finite non-zero energy. 
\end{theorem}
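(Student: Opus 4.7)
The plan is to establish both finite and non-zero energy via two complementary estimates. For finite energy, the natural tool is the It\^o trick (Kipnis--Varadhan inequality) for stationary Markov processes, while for non-zero energy one reduces to showing $\liminf_N \E[(A^N_t(\varphi))^2] > 0$ for some $t>0$ and $\varphi$, and then exploits the Gaussian structure of the invariant measure to compute this limit explicitly.

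To carry out the upper bound, I would first record that the generator $\gen$ of the stationary process decomposes, by Lemma~\ref{lem:invariant}, as $\gen = \gens + \gena$, with $\gens$ the symmetric Ornstein--Uhlenbeck part and $\gena$ the antisymmetric contribution from the renormalised nonlinear drift. The It\^o trick then gives
\begin{equation*}
\E\left[\left(\int_s^t G(h^N_r)\dd r\right)^2\right] \le C(t-s)\|G\|_{-1,\gens}^2
\end{equation*}
for any mean-zero $G\in L^2(\mu)$. Applied to $G = \const\langle F^N,\varphi\rangle$, with $F^N := \Pi_N((\Pi_N\partial_1 h)^2 - (\Pi_N\partial_2 h)^2)$ purely in the second Wiener chaos (the zeroth-chaos parts cancel by the $k_1 \leftrightarrow k_2$ symmetry of the Fourier cut-off), and summed over a partition $\pi$ of $[0,T]$, this produces
\begin{equation*}
\sum_i \E[(A^N_{t_{i+1}}(\varphi) - A^N_{t_i}(\varphi))^2] \le CT\,\const^2\,\|\langle F^N,\varphi\rangle\|_{-1,\gens}^2.
\end{equation*}
A direct Fourier computation of this second-chaos $H^{-1}$-norm shows a growth of order $\log N$, which combined with~\eqref{e:ScalingRCintro} gives a bound $\le CT$ uniform in $N$ and $\pi$. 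Gaussian hypercontractivity on the second chaos then yields uniform integrability of the squared increments, so the bound survives the passage to any weak limit point.

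For non-zero energy, the trivial partition $\{0,t\}$ gives energy $\ge \E[(A_t(\varphi))^2]$, and by uniform integrability it suffices to show $\liminf_N \E[(A^N_t(\varphi))^2] > 0$. Stationarity yields
\begin{equation*}
\E[(A^N_t(\varphi))^2] = 2\const^2 \int_0^t (t-u)\bigl\langle \langle F^N,\varphi\rangle, e^{u\gen}\langle F^N,\varphi\rangle\bigr\rangle_\mu \dd u.
\end{equation*}
Splitting $e^{u\gen} = e^{u\gens} + (e^{u\gen} - e^{u\gens})$, the Ornstein--Uhlenbeck contribution is an explicit non-negative Fourier sum: since $\langle F^N,\varphi\rangle$ is a second-chaos element, $e^{u\gens}$ acts diagonally with eigenvalues $e^{-(|k_1|^2+|k_2|^2)u/2}$, and using $\int_0^t (t-u)e^{-\alpha u}\dd u \sim t/\alpha$ for large $\alpha t$ one checks that this piece behaves like $\const^2 \log N \cdot L_0(t,\varphi)$, which by~\eqref{e:ScalingRCintro} converges to a strictly positive limit $L(t,\varphi) > 0$. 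The Duhamel remainder $\int_0^u e^{(u-s)\gen}\gena e^{s\gens}\dd s$ carries an extra factor $\const$ from $\gena$, and decomposing $\gena = \genap + \genam$ into chaos-raising and chaos-lowering pieces and applying hypercontractivity on each chaos one shows that this remainder contributes only $o(1)$ as $N\to\infty$. Combining these two estimates yields $\liminf_N \E[(A^N_t(\varphi))^2] \ge L(t,\varphi) > 0$.

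The principal obstacle is precisely this last step: controlling the Duhamel correction uniformly in $N$. Although $\gena$ carries the small factor $\const$, its iterated action on second-chaos elements generates sums over enlarged Fourier index sets whose naive estimates are logarithmically divergent, and a careful multi-scale argument tracking separately the contributions of $\genap$ and $\genam$ is needed to confirm that the correction is genuinely sub-leading under~\eqref{e:ScalingRCintro}. Once this is in hand, the theorem follows directly from the two-sided bounds described above.
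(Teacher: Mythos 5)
Your finite-energy half is essentially the paper's argument: the It\^o/Kipnis--Varadhan bound applied to the second-chaos functional, a Fourier computation of the $\gens$-weighted $H^{-1}$ norm giving a $\log N$ growth killed by the scaling \eqref{e:ScalingRCintro} (this is \eqref{e:nonlin:energy-estimate}), and uniform integrability via hypercontractivity to pass to limit points. That part is sound.

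The non-zero energy half, however, has a genuine gap, and it is exactly the step you flag yourself: the claim that the Duhamel remainder in $e^{u\gen}=e^{u\gens}+\int_0^u e^{(u-s)\gen}\gena e^{s\gens}\,\dd s$ contributes only $o(1)$ because $\gena$ carries a factor $\const$. This is not merely unproven; it is almost certainly false. The first-order term vanishes by chaos orthogonality (since $\gena$ maps the second chaos to the first and third), so the leading correction is quadratic in $\gena$, i.e.\ carries $\const^2\sim (\log N)^{-1}$, and the associated mode sums are themselves logarithmically divergent --- by precisely the same mechanism that makes your leading Ornstein--Uhlenbeck term $\const^2\log N$ of order one. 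Concretely, quantities such as $\langle \gena G,(\lambda-\gens)^{-1}\gena G\rangle$ for second-chaos $G$ are $O(1)$, not $o(1)$ (these are the terms $\three$ and $\four$ bounded in Proposition~\ref{p:LowerBound}, whose constants do not vanish with $N$), so the antisymmetric part is not a perturbation of the symmetric dynamics at this scaling, and a naive expansion gives competing $O(1)$ contributions with no control on cancellation. The paper circumvents this obstruction entirely: it passes to the Laplace transform (Lemma~\ref{lem:Laplace}) and uses the resolvent variational formula of Lemma~\ref{lem:VarFor}, plugging in the single trial function $G=\delta\pois(\varphi)$; the favourable pairing $\one$ is linear in $\delta$ while the terms $\two$, $\three$, $\four$ involving $\gena$ are quadratic in $\delta$ and merely need to be bounded uniformly in $N$ and $\lambda$, so a small fixed $\delta$ yields the uniform lower bound of Corollary~\ref{cor:NontrivNonlin} without ever requiring the antisymmetric contributions to be sub-leading. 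The two-sided Laplace bound is then converted into the short-time behaviour $\Exp[\cB_t(\varphi)^2]\sim t$ by the Tauberian-type Lemma~\ref{lem:ST}, which gives non-zero energy of any limit point. To repair your argument you would either have to prove genuine smallness of the full Duhamel series (which the structure of the problem does not support), or replace that step by a one-sided device of the variational type used in the paper.
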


Let us remark that in the paper of Wolf, the scaling chosen in the previous statement is indeed the relevant regime according 
to his renormalisation group computations, see~\cite[eq. (10)-(11)-(12)]{W91}\footnote{Indeed, the equations mentioned 
seem to suggest that, for $Q$ as in our case, in order to get the effective constants, one should let the 
strength of the noise to $0$. By scaling properties of the equation, this is equivalent to taking the nonlinearity to $0$.}. 

Theorem~\ref{thm:nontriviality} is proved in Proposition~\ref{p:LimitPoints} and Theorem~\ref{thm:AKPZ-nontriviality}, where it 
is actually shown more. In particular, our results suggest that {\it any} subsequential limit of $\{h^N\}_N$ will contain a 
{\it new} noise which is 
produced by the dynamics itself. Understanding the nature of this new noise (and its relation to the original one) will be crucial 
in the characterisation of the limit points and is
currently being investigated by the authors.

\subsection{Strategy}

Using tools from Malliavin calculus, we show in Lemma~\ref{lem:invariant} that the invariant measure of 
$h^N$  is given by a Gaussian free field $\tilde\eta$. 
Starting from the invariant measure, we use ideas from \cite{GubinelliJara2012} (established in the study of 
energy solutions in the one-dimensional case) to show that in the scaling regime 
(\ref{e:ScalingRC}) the sequence of solutions is tight, see Theorem \ref{thm:tightness}.
The crucial observation \eqref{e:solPoiNL} is that there exists an explicit functional of $h^N$, called $\pois$, with the property 
that the non-linearity at $h^N$ equals $\gens\pois$, where $\gens$ denotes the generator of the underlying 
linear equation \eqref{e:gens}. 
Using martingale techniques, we are able to obtain bounds which are strong enough to control the non-linearity and 
to establish tightness of the sequence of solutions (see Lemmas~\ref{lem:ito-trick} and~\ref{lem:energy-estimate}). 

We rule out triviality by establishing a non-vanishing lower bound on the second moment of the integral in time of the non linearity, 
see Corollary~\ref{cor:NontrivNonlin}.
Inspired by the analysis of the generator for the one dimensional KPZ equation in~\cite{GPGen} and of the diffusion
coefficient for the asymmetric simple exclusion process in $d=1,2$ of~\cite{Landim2004}, we show that its Laplace transform 
is non-zero in the limit as $N$ tends to infinity. 
The main tool we use for this is the variational formula presented in Lemma~\ref{lem:VarFor}.

\begin{remark}
We want to stress that, in principle, the techniques we adopt are sufficiently flexible 
to be used for other equations at criticality for which 
the invariant measure is explicitly known (e.g. the equations in~\cite[Sections 6 and 7]{GubinelliJara2012}). Moreover, 
since they were inspired by tools introduced in the particle systems context, we think that our approach might prove useful in 
establishing existence of subsequential limits for particle systems and
improve our understanding of their large scale behaviour (e.g. the time evolution).  
\end{remark}

\subsection{Structure of the article}

In Section \ref{S:Malliavin} we recall basic facts from Malliavin calculus, which we use in Section \ref{S:Properties} to show that 
the Gaussian free field is indeed invariant for $h^N$ and to analyse the generator of the Markov process 
$\{h^N(t)\}_t$. In Section \ref{sec:Tightness} we then establish tightness of $h^N$ and prove Theorem~\ref{thm:tight}. 
In Section \ref{S:nontrivialty}, we show non-triviality of the non-linearity and prove Theorem~\ref{thm:nontriviality}. 
We conclude the paper with Section~\ref{sec:Consequence}, in which we explore further consequences of the bounds established 
in Section~\ref{sec:Tightness}.

\subsection*{Notations and function spaces}
The notation $\Z^2_0$ always refers to $\Z^2\setminus\{0\}$ and 
$\T^2$ denotes the two-dimensional torus of side length $2\pi$. 
We equip the space $L^2(\T^2;\C)$ with the Fourier basis $\{e_k\}_{k\in\Z^2}$ defined via 
$e_k(x) \eqdef \frac{1}{2\pi} e^{i k \cdot x}$. 
The basis functions $e_k$ can be decomposed in their real and imaginary part, so that $e_k=a_k+\iota b_k$ and 
the system $\{\sqrt{2}a_k\}_{k \in \Z^2} \sqcup \{\sqrt{2} b_k\}_{k \in \Z^2_0}$ forms a real valued orthonormal basis of $L^2(\T^2)$. 
The Fourier transform, denoted by $\cF$ and at times also by $\hat\cdot$, is given by the formula
	\begin{equation}\label{e:FT}
	\cF(\varphi)(k) \eqdef \varphi_k = \int_{\T^2} \varphi(x) e_{-k}(x)\dd x\,.
	\end{equation}
For any real valued distribution $\eta\in\cD'(\T^2)$ and $k\in\Z^2$, 
its Fourier transform is given by the (complex) pairing 
\begin{equation}\label{e:complexPairing}
\eta_k\eqdef \eta(e_{-k})=\eta(a_k)-\iota \eta(b_k)
\end{equation} 
so that $\overline{\eta(e_k)}=\eta(e_{-k})$. 
Moreover, we recall that the Laplacian $\Delta$ on $\T^2$ has eigenfunctions $\{e_k\}_{k \in \Z^2}$ with eigenvalues $\{-|k|^2\,:\,k\in\Z^2\}$, and we define the operator $(-\Delta)^\theta$
	by its action on the basis elements 
	\begin{equation}\label{e:fLapla}
	(-\Delta)^\theta e_k\eqdef |k|^{2\theta}e_k(x).
	\end{equation}	
	
We will work mostly in Besov spaces. For a thorough exposition on these spaces and their properties, 
we refer the interested reader to~\cite{BCD}, see also~\cite[App.~A]{Para} for a review of the results which we will need below. 
Besov spaces are defined via a dyadic partition of unity $(\chi,\varrho)\in\cD$, i.e. $\chi$
and $\varrho$ are non-negative radial functions such that 
\begin{itemize}[label=-,noitemsep]
\item the supports of $\chi$ and $\varrho$ are respectively contained in a ball and an annulus, 
\item $\chi(x)+\sum_{j\geq 0}\varrho(2^{-j} x)=1$ for all $x\in\R^d$,
\item $\supp(\chi)\cap\supp\varrho(2^{-j}\cdot)=\emptyset$ for all $j\geq 1$ and 
	$\supp(\varrho(2^{-j}\cdot)\cap\supp\varrho(2^{-i}\cdot))=\emptyset$ whenever $|i-j|> 1$. 
\end{itemize}
For any distribution $u\in\cD'(\T^2)$, the Littlewood-Paley blocks are defined as
\begin{equ}
\Delta_{-1} u=\cF^{-1}(\chi\cF(u))\,,\qquad\text{and}\qquad \Delta_{j} u=\cF^{-1}(\varrho_j\cF(u))\,,\quad j\geq 1
\end{equ}
where $\varrho_j\eqdef\varrho(2^{-j\cdot})$. Since $K_j\eqdef \cF^{-1}\varrho_j$ is a smooth function, so is $\Delta_ju=K_j\ast u$.
Given $\alpha\in\R$, $p,q\in[1,+\infty)$, the Besov space $B^\alpha_{p,q}$ is given by 
\begin{equation}\label{def:Besov}
B^\alpha_{p,q}(\T^2)\eqdef\Big\{u\in\cD'(\T^2)\,:\,\|u\|_{B^\alpha_{p,q}}^q\eqdef
\sum_{j\geq -1} 2^{\alpha j q}\|\Delta_j u\|_{L^p(\T^2)}^q<\infty\Big\}\,.
\end{equation}
In the special case $p=q=\infty$, the norm is 
\begin{equ}
\|u\|_{B^\alpha_{\infty,\infty}}\eqdef \sup_{j\geq -1} 2^{\alpha j}\|\Delta_j u\|_{L^\infty(\T^2)}
\end{equ}
and, since this is the space we will mainly work with, we set $\CC^\alpha\eqdef B^\alpha_{\infty,\infty}$ and denote 
the corresponding norm by $\|u\|_\alpha\eqdef\|u\|_{B^\alpha_{\infty,\infty}}$. 
This notation is justified by the fact that for $\alpha>0$, $\alpha \notin\N$ the space $B^\alpha_{\infty,\infty}$ 
coincides with the usual space of $\alpha$-H\"older continuous functions. 
We also point out that for $p=q=2$ and $\alpha\in\R$, $B^\alpha_{2,2}=H^\alpha$, where the latter is the usual 
Sobolev space of regularity index $\alpha$, whose norm (on the torus) can be written as 
\begin{equ}
\|u\|^2_{\alpha,2}=\|u\|^2_{H^\alpha}=\sum_{k\in\Z^2}(1+|k|^2)^\alpha|u_k|^2.
\end{equ}
Restricted to the subspace of distribution $u$ with $u_0=0$, one may replace $1+|k|^2$ by $|k|^2$. 

We will need to following classical embedding theorem for Besov spaces  
(see e.g.~\cite[Lemma~A.2]{Para}). 
\begin{lemma}
For any $\alpha\in\R$, $1\leq p_1\leq p_2\leq\infty$ and $1\leq q_1\leq q_2\leq \infty$
one has
\begin{equation}\label{e:BesovEmb}
	B^\alpha_{p_1,q_1}(\T^2)\hookrightarrow B^{\alpha-2\left(\frac{1}{p_1}-\frac{1}{p_2}\right)}_{p_2,q_2}(\T^2).
\end{equation}
In particular one has $\| u\|_{\alpha-2/p}\leq \|u\|_{B^\alpha_{p,p}}$. 
\end{lemma}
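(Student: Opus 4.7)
The plan is to reduce the statement to a block-by-block estimate on the Littlewood-Paley decomposition by means of Bernstein's inequality, then to repackage these estimates in the Besov norm on the right-hand side.

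First I would recall the Bernstein-type inequality: if $u \in L^{p_1}(\T^2)$ has Fourier spectrum contained in a ball of radius $R \geq 1$, then there is a constant $C=C(p_1,p_2)$ independent of $R$ such that
\begin{equation*}
\|u\|_{L^{p_2}(\T^2)} \leq C\, R^{2\left(\frac{1}{p_1}-\frac{1}{p_2}\right)} \|u\|_{L^{p_1}(\T^2)}.
\end{equation*}
This is standard and follows from writing $u = K_R \ast u$, where $K_R$ is obtained by convolving $u$ against a fixed Schwartz cutoff $\tilde\chi$ (equal to one on the frequency support) dilated by $R$; Young's convolution inequality applied with the appropriate Lebesgue exponents, together with the scaling $\|K_R\|_{L^r} \simeq R^{2(1-1/r)}$, produces the claimed factor with $d=2$.

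Next I would apply this inequality to each Littlewood-Paley block $\Delta_j u$. By construction $\Delta_j u = K_j \ast u$ has spectrum in an annulus of radius $\simeq 2^j$ for $j \geq 0$, and in a ball of bounded radius for $j=-1$. Therefore, writing $\sigma \eqdef \alpha - 2\bigl(\frac{1}{p_1}-\frac{1}{p_2}\bigr)$,
\begin{equation*}
2^{\sigma j}\|\Delta_j u\|_{L^{p_2}} \;\lesssim\; 2^{\sigma j}\, 2^{2j\left(\frac{1}{p_1}-\frac{1}{p_2}\right)} \|\Delta_j u\|_{L^{p_1}} \;=\; 2^{\alpha j}\|\Delta_j u\|_{L^{p_1}},
\end{equation*}
uniformly in $j \geq -1$.

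Finally I would conclude by using the monotonicity of the $\ell^q$ norms on $\Z$: since $q_1 \leq q_2$, one has $\|\cdot\|_{\ell^{q_2}} \leq \|\cdot\|_{\ell^{q_1}}$, so
\begin{equation*}
\|u\|_{B^\sigma_{p_2,q_2}} = \bigl\| 2^{\sigma j}\|\Delta_j u\|_{L^{p_2}} \bigr\|_{\ell^{q_2}(j)} \;\lesssim\; \bigl\| 2^{\alpha j}\|\Delta_j u\|_{L^{p_1}} \bigr\|_{\ell^{q_2}(j)} \;\leq\; \bigl\| 2^{\alpha j}\|\Delta_j u\|_{L^{p_1}} \bigr\|_{\ell^{q_1}(j)} = \|u\|_{B^\alpha_{p_1,q_1}},
\end{equation*}
which is the desired continuous embedding. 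The particular case $\|u\|_{\alpha-2/p} \leq \|u\|_{B^\alpha_{p,p}}$ corresponds to $p_1=q_1=p$, $p_2=q_2=\infty$. The only subtle point (hence the main technical ingredient) is the Bernstein inequality on the torus with explicit dependence on the spectral radius; everything after it is essentially bookkeeping on the indices.
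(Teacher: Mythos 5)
Your argument is correct and is exactly the standard Bernstein-plus-$\ell^q$-monotonicity proof; the paper itself gives no proof, simply citing the literature (where this is precisely the argument used, with $d=2$ accounting for the exponent $2(\frac{1}{p_1}-\frac{1}{p_2})$). The only quibble is a slip of wording in the Bernstein step ($K_R$ is the inverse Fourier transform of the dilated cutoff, not something obtained by convolving $u$), which does not affect the proof.
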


At last, we will denote the space of $\gamma$-H\"older continuous functions on $[0,T]$ with values in a Banach space $B$
by $C_T^\gamma B$.

\section*{Acknowledgements}
We are grateful to Martin Hairer, Milton Jara, Nicolas Perkowski, Fabio Toninelli, and Nikolaos Zygouras for 
helpful discussions and suggestions. A special thanks goes to Nicolas Perkowski for many useful comments and for
pointing out a mistake in an earlier version of the paper. We also thank Ivan Corwin for mentioning
the references~\cite{BCF,BCT}. 
G. C. gratefully acknowledges financial support via the EPSRC grant EP/S012524/1. 
D. E. gratefully acknowledges financial support 
from the National Council for Scientific and Technological Development - CNPq via a 
Universal grant 409259/2018-7. P. S.  acknowledges funding through Martin Hairer's ERC consolidator grant, project 615897.

\section{A primer on Wiener space analysis and Malliavin calculus}
\label{S:Malliavin}

We recall basic tools from Malliavin calculus which we will use below. Most of this is taken from~\cite[Chapter 1]{Nualart2006} to which we refer the interested reader (see also~\cite{GPnotes,GPGen}).

Let $(\Omega, \cF,\P)$ be a complete probability space and $H$ a real separable Hilbert space, with scalar product 
$\langle\cdot,\cdot\rangle$. A stochastic process $\{\eta(h)\,:\,h\in H\}$ is called {\it isonormal Gaussian process} 
if $\eta$ is a family of centred jointly Gaussian random variables whose correlations are given by 
$\E[\eta(h)\eta(g)]=\langle h,g\rangle$. 
Given an isonormal Gaussian process $\eta$ on $H$ and $n\in\N$, we define $\cH_n$ as the closed linear subspace of 
$L^2(\eta)\eqdef L^2(\Omega)$ generated by the random variables $H_n(\eta(h))$, 
where $H_n$ is the $n$-th Hermite polynomial, and 
$h\in H$ is such that $\|h\|_H=1$. For $m\neq n$, $\cH_n$ and $\cH_m$ are orthogonal and 
$L^2(\eta)$ coincides with 
the direct orthogonal sum of the $\cH_n$'s, i.e. $L^2(\eta)=\bigoplus_{n}\cH_n$ (see~\cite[Theorem 1.1.1]{Nualart2006}). 
The subspace $\cH_n$ is called the {\it $n$-th homogeneous Wiener chaos}. 

When the Hilbert space $H$ is of the form $L^2(T)$, for $(T,\cB,\mu)$ a measure space with a 
$\sigma$-finite and atomless measure $\mu$, the decomposition above can be refined. Namely, 
for every $n\in\N$ there exists a canonical contraction $\sint{}:\bigoplus_{n\ge 0} L^2(T^n) \to L^2(\eta)$, 
called (iterated) Wiener-It\^o integral with respect to $\eta$, 
which restricts to an isomorphism $\sint{}:\fock \to L^2(\eta)$ on the Fock space $\fock:=\bigoplus_{n \ge 0} \fock_n$, 
where $\fock_n$ denotes the space $L_\sym^2(T^n)$ of functions in $L^2(T^n)$ which are symmetric
with respect to permutation of variables. Moreover, the restriction of $\sint$ to $\fock_n$, denoted by $\sint_n$, is an isomorphism 
onto the $n$-th homogenous Wiener chaos $\cH_n$ since, by~\cite[Proposition~1.1.4]{Nualart2006}, we have 
\begin{equation}\label{eq:Hermite}
n! H_n(\eta(h))= \sint_n(\hot)\,,\qquad\text{for all $h\in H$ such that $\|h\|_H=1$,}
\end{equation}
where $\hot$ is the tensor product of $n$ copies of $h$. 
We also recall \cite[Proposition~1.1.3]{Nualart2006} that for $f\in L_\sym^2(T^n)$ and $g\in L_\sym^2(T^m)$ one has
\begin{equation}\label{eq:multiplicationiterated}
\sint_n(f) \sint_m(g)  = \sum_{p=0}^{m\land n} p! \binom{n}{p}\binom{m}{p} 
\sint_{m+n-2p} (f \otimes_p g),
\end{equation} 
where
\[
(f \otimes_p g)(x_{1: m+n-2p})\eqdef 
\int_{T^p}\mu(dy_1) \dots \mu(dy_p) f(x_{1:n-p}, y_{1:p})
g(x_{n-p+1:m+n-2p}, y_{1:p}). 
\]
Here, we adopted the short-hand notation $(x_{1:n})\eqdef (x_1,\dots,x_n)$. 
\newline

We call a function $F:\CD'\to\R$ a {\it cylinder function} if there exist $\varphi_1,\dots,\varphi_n\in \CD$ 
and a smooth function $f:\R^n\to\R$
with all partial derivatives growing at most polynomially at infinity such that
\def\eBprime{u}$F(\eBprime)=f(\eBprime(\varphi_1),\dots,\eBprime(\varphi_n))$.
Given a cylinder function $F$ as above, we define its ``directional derivative''
in the direction of $\psi$ by $D_{\psi}F(\eBprime)\eqdef \sum_{i=1}^n \partial_i f(\eBprime(\varphi_1),\dots,\eBprime(\varphi_n)) \langle \vphi_i, \psi \rangle$. 
In case that $\{\vphi_i\}_{i \le n}$ forms an orthonormal system in $L^2(\R^n)$ one has the simplified formula $D_{\vphi_i} F(u) = \partial_i f(\eBprime(\varphi_1),\dots,\eBprime(\varphi_n))$.

Similarly, given Hilbert space $H$ and an isonormal Gaussian process $\eta$ on $H$, we call a random variable $X \in L^2(\eta)$ ``smooth'' (compare \cite[(1.28)]{Nualart2006}), if there exists $h_1 , \dots , h_n \in H$ and a smooth function $f :\R^n \to \R$ with all derivatives growing at most polynomially, such that $X=f(\eta(h_1),\dots,\eta(h_n))$ almost surely. 
For a smooth random variable $F$, we define the Malliavin derivative (see~\cite[Definition 1.2.1]{Nualart2006}) of $X$ by
\begin{equ}
DX\eqdef \sum_{i=1}^nD_{h_i} X(\eta) h_i=\sum_{i=1}^n\partial_if(\eta(h_1),\dots,\eta(h_n)) h_i\,.
\end{equ}
In order to manipulate Malliavin derivatives, an important property is the analog of the integration by parts formula, 
the so called {\it Gaussian integration by parts} given in~\cite[Lemma 1.2.2]{Nualart2006}. 
Let $F,G$ be smooth random variables on $\Omega$, then 
\begin{equation}\label{eq:intbyparts1}
\E[G\langle DF, h\rangle]= \E[-F\langle DG, h\rangle + FG\eta(h)],
\end{equation}
where $\E$ is the expectation with respect to the law of $\eta$. 
\newline

Throughout the rest of the paper, the isonormal Gaussian process $\eta$ we will consider 
is the zero-mean spatial white noise on the two dimensional 
torus $\T^2$. To be more precise, $\eta$ is a centred isonormal Gaussian process on $H\eqdef L^2_0(\T^2)$ 
whose covariance function is given by 
\begin{equ}\label{eq:spatial:white:noise}
\E[\swn(\vphi)\swn(\psi)]=\langle \vphi, \psi\rangle
\end{equ}
for any two functions $\varphi,\psi\in H$, where $\langle\cdot,\cdot\rangle$ is the usual scalar product in $L^2(\T^2)$. 
We will mainly work with the Fourier representation of $\eta$, given by the 
family of complex valued, centred Gaussian random variables $\{\eta_k\}_{k\in\Z^2}$, where $\eta_0=0$, $\overline{\eta_k}=\eta_{-k}$
and  $\E[\eta_k\eta_j]=\delta_{k+j=0}$. Given a smooth random variable 
$F$ in $L^2(\eta)$ (or a cylinder function on $\CS'(\T^2)$) and $k\in\Z^2$ we write 
\begin{equation}\label{e:FDer}
D_k F\eqdef \langle DF,e_k\rangle_{L^2(\T^2;\C)}
\end{equation}
where for any $f,g\in L^2(\T^2;\C)$, the scalar product above is the usual Hermitian scalar product, i.e. 
$\langle f,g\rangle_{L^2(\T^2;\C)}=\int f\bar{g}$. For future use, we remark here that for any $k\in\Z^2$ 
\begin{equation}\label{eq:intbyparts}
\E[G D_kF]=\E[G\langle DF, e_k\rangle_{L^2(\T^2;\C)}]= \E[-F D_kG + FG\eta_{k}].
\end{equation}
where we used~\eqref{eq:intbyparts1} together with~\eqref{e:complexPairing} and~\eqref{e:FT}.


\section{Properties of the approximating equations}
\label{S:Properties}

In order to simplify our analysis below we will be working with $u^N\eqdef (-\Delta)^{\frac{1}{2}}h^N$,
which solves
\begin{equation}\label{e:AKPZ:u}
\partial_t u^N = \frac{\donst}{2} \Delta u^N
+
\const \cN^N[u^N] + \donst^{\frac{1}{2}}(-\Delta)^{\frac{1}{2}}\xi
\end{equation}
where the non-linearity $\cN^N$ is given by 
\begin{equation}\label{e:nonlin}
\cN^N[u^N]\eqdef (-\Delta)^{\frac{1}{2}}\Pi_N \Big((\Pi_N \partial_1(-\Delta)^{-\frac{1}{2}} u^N)^2 - (\Pi_N \partial_2 (-\Delta)^{-\frac{1}{2}} u^N)^2\Big)\,.
\end{equation}
By definition of the H\"older-Besov spaces (\ref{def:Besov}), the fractional Laplacian \eqref{e:fLapla} is a continuous 
and continuously invertible linear bijection $(-\Delta)^\half:\CC^{\alpha}_0 \to \CC^{\alpha-1}_0$, for any $\alpha\in\R$, 
where $\CC^{\alpha}_0$ denotes the closed subspace of $\CC^{\alpha}$ spanned by distributions with vanishing 
$0$-th Fourier component. 
Theorem \ref{thm:tight} therefore reduces to showing tightness of the sequence $u^N$ in $C_T^\gamma\CC^{\alpha-1}$
 (see Theorem~\ref{thm:tightness}), for 
$\alpha$ as in the statement, and of the $0$-th Fourier mode (see Theorem~\ref{thm:tightnessAKPZ}. 
As part of this argument we also show that the anisotropic KPZ equation~\eqref{e:kpz:reg} requires 
no renormalisation other than the coupling constant renormalisation introduced in Theorem \ref{thm:tight} 
(see Remark~\ref{rmk:NoRenom}).   

Passing to Fourier variables, we see that equation~\eqref{e:AKPZ:u} can be equivalently written as an infinite system 
of SDEs 
\begin{equation}\label{e:kpz:u}
\dd u^N_k =
\left(-\frac{\donst}{2} |k|^2 u_k^N
+
\const \cN^N_k[u^N]\right) \dd t + \donst\phalf |k|\dd B_k(t)\,,\qquad k\in\Z^2_0,
\end{equation}
where the complexed valued Brownian motions $B_k$ are defined via  $B_k(t)\eqdef \int_0^t \xi_k(s)\, ds$, which in particular implies that $\overline{B_k}= B_{-k}$. Hence their quadratic covariation is given by
$\dd\langle B_k, B_\ell \rangle_t = \mathds{1}_{\{k+\ell=0\}}\, \dd t$ for $k,l \ne 0$. The $k$-th Fourier component of the non-linearity is
\begin{align}
\cN_k^N[u^N]&\eqdef\cN^N[u^N](e_{-k})= \sum_{\ell+m=k}\nonlin_{\ell,m} u_\ell^N u_m^N\,,\label{e:nonlinF}\\
 \nonlin_{\ell,m} &\eqdef |\ell+m| \frac{\thec(\ell,m)}{|\ell||m|} \indN{\ell,m,\ell+m} \label{e:nonlinCoefficient}
\end{align}
where, for $\ell=(\ell_1,\ell_2),\,m=(m_1,m_2)\in\Z^2_0$,  $c(\ell,m) \eqdef \ell_2m_2- \ell_1m_1$ and 
$\indN{a,b,\ldots}$ is a abbreviation for 
$\mathds{1}_{|a|_\infty\le N, |b|_\infty\le N, \ldots}$.
 
This approximation scheme has the advantage that it completely decouples the equations for 
$\{u_k^N\,:\,|k|_\infty\leq N\}$ and $\{u_k^N\,:\,|k|_\infty> N\}$. The latter is an infinite family of independent 
Ornstein-Uhlenbeck processes, while the first is a finite-dimensional system of SDEs interacting via a quadratic nonlinearity. 
Local existence and uniqueness is classical (since the coefficients are locally Lipschitz continuous) and the process
$t\mapsto \{u^N_k(t)\}_{k\in\Z_0^2}$ is clearly strong Markov.
At this point we refrain from being more specific about the state space for this process. 
As long as we are working with fixed $N$, any ``reasonable'' choice could be used for the sake of the current section 
(one could take e.g. $H^\alpha$, $\alpha<-1$, if one wants to deal with~\eqref{e:AKPZ:u} directly 
or $\R^{\Z_0^2}$ with the product topology, if instead one focuses on the system~\eqref{e:kpz:u}). 
We postpone a detailed discussion of the spaces we actually want to work in to the proof of tightness 
in Section~\ref{sec:Tightness}.
\newline

Returning to the system in~\eqref{e:kpz:u}, we can easily determine the generator $\gen$ for the dynamics of $u^N$
(for example by applying It\^o's formula, single out the drift part, see~\eqref{e:Ito} below, and take the Fourier transform).
Let $F$ be a real valued cylinder function acting on distributions $v \in \CD'(\T^2)$ and depending only on finitely many Fourier components of the distribution $v$,
and decompose the generator into $\gen=\gens+\gena$, where $\gens$ and $\gena$ act on $F$ via
\begin{align}
(\gens F)(v) &\eqdef \frac{\donst}{2} \sum_{k \in \Z^2} |k|^2 (-v_{-k} D_k  +  D_{-k}D_k )F(v) \label{e:gens}\\
(\gena F)(v) &= \const \sum_{m,l \in \Z_0^2} \nonlin_{m,l} v_m v_l D_{-m-l} F(v)\,. \label{e:gena}
\end{align}
As a first step of our analysis, we show that the spatial white noise $\eta$ on $\T^2$
 is invariant for the Markov process $u^N$ for all $N\in\N$.

\begin{lemma}\label{lem:invariant}
For any $N\in\N$, the spatial white noise $\eta$ defined in (\ref{eq:spatial:white:noise}) is invariant for the solution 
$u^N=\{u^N_k\}_{k\in\Z_0^2}$ of~\eqref{e:kpz:u}. 
Moreover, with respect to $L^2(\eta)$ the symmetric and anti-symmetric part of $\gen$ are given by 
$\gens$ and $\gena$, respectively.
\end{lemma}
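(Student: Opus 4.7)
The plan is first to establish the operator-theoretic assertion (that $\gens$ is symmetric and $\gena$ is anti-symmetric on $L^2(\eta)$) and then deduce invariance of $\eta$ as an immediate corollary. Indeed, once we have $\E[F \gens G] = \E[G \gens F]$ and $\E[F \gena G] = -\E[G \gena F]$ for every pair of cylinder functionals $F,G$, setting $G \equiv 1$ and using $\gens 1 = \gena 1 = 0$ (both are purely differential operators) yields $\E[\gen F] = 0$ for every cylinder $F$. Together with the Markov property for the finite-dimensional SDE system~\eqref{e:kpz:u}, this is enough to conclude invariance of $\eta$.

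For $\gens$, the approach is to apply the Gaussian integration by parts formula~\eqref{eq:intbyparts} to each summand of $\E[G \gens F]$. Using the relation $D_k v_j = \mathds{1}_{j=-k}$ (so in particular $D_k v_{-k} = 1$), one IBP applied to the drift term $-v_{-k} D_k F$ and two IBPs applied to the second-order term $D_{-k} D_k F$ should, after all the spurious boundary contributions (of the form $\eta_k \eta_{-k}$ and $\E[FG]$) cancel pairwise between the two terms, produce the Dirichlet-form representation
\begin{equation*}
\E[G \gens F] = -\frac{\donst}{2} \sum_{k \in \Z^2_0} |k|^2\, \E\bigl[ D_k F\, D_{-k} G \bigr],
\end{equation*}
which is manifestly symmetric in $F, G$ after the change of summation index $k \mapsto -k$. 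This is a routine Ornstein--Uhlenbeck-type computation and I expect no difficulty here.

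The anti-symmetry of $\gena$ is the main content. Applying~\eqref{eq:intbyparts} to $\E[F \gena G] = \const \sum_{m,l \in \Z_0^2} \nonlin_{m,l}\, \E[F v_m v_l D_{-m-l} G]$ to transfer the derivative from $G$ to $F$, and noting that $D_{-m-l} v_m = \mathds{1}_{l=0}$ and $D_{-m-l} v_l = \mathds{1}_{m=0}$ both vanish on the summation range (so no cross terms appear), I obtain
\begin{equation*}
\E[F \gena G] = -\E[G \gena F] + \const \sum_{m,l \in \Z^2_0} \nonlin_{m,l}\, \E\bigl[ FG\, \eta_m \eta_l \eta_{-m-l}\bigr].
\end{equation*}
Anti-symmetry thus reduces to showing that the residual cubic form $\sum_{m,l} \nonlin_{m,l}\, \eta_m \eta_l \eta_{-m-l}$ vanishes identically as a random variable. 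Setting $k = -m-l$, the indicator in $\nonlin_{m,l}$ is symmetric in the triple $(m,l,k)$ on the constraint surface $m+l+k=0$, so the symmetry of $\eta_m \eta_l \eta_k$ under permutations of its indices allows me to replace $\nonlin_{m,l}$ by its full symmetrization in $(m,l,k)$, which (up to the positive factor $1/(3|k||l||m|)$) equals $|k|^2 c(l,m) + |l|^2 c(k,m) + |m|^2 c(k,l)$.

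The main (and essentially only) obstacle is the purely algebraic identity
\begin{equation*}
|k|^2 c(l,m) + |l|^2 c(k,m) + |m|^2 c(k,l) = 0 \qquad \text{whenever } k+l+m=0.
\end{equation*}
I would verify it by substituting $k = -l-m$ and expanding $c(a,b) = a_2 b_2 - a_1 b_1$: the mixed terms collapse and the left-hand side reduces to $2(l\cdot m)\, c(l,m) - |l|^2 c(m,m) - |m|^2 c(l,l)$, after which a short direct expansion shows $|l|^2 c(m,m) + |m|^2 c(l,l) = 2(l_1 m_1 + l_2 m_2)(l_2 m_2 - l_1 m_1) = 2(l\cdot m)\, c(l,m)$. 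The identity depends crucially on the sign structure of $c$, i.e.\ on the choice $Q = \mathrm{diag}(1,-1)$ with $\det Q < 0$; this is exactly what singles out the anisotropic class and is ultimately responsible for the Gaussian free field being invariant for the dynamics.
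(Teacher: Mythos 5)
Your proposal is correct and follows essentially the same route as the paper: Gaussian integration by parts reduces everything to the pointwise vanishing of the cubic form $\sum_{m,\ell}\cK^N_{m,\ell}\,\eta_m\eta_\ell\eta_{-m-\ell}$, which both you and the paper establish by symmetrising the coefficient over the triple $(m,\ell,-m-\ell)$ and checking an algebraic cancellation specific to $c(\ell,m)=\ell_2m_2-\ell_1m_1$ (your packaging of it as $|k|^2c(l,m)+|l|^2c(k,m)+|m|^2c(k,l)=0$ on $k+l+m=0$ is a clean equivalent of the paper's four-term expansion). The only cosmetic difference is the order of the argument: you prove symmetry/anti-symmetry first and deduce $\E[\cL^N F]=0$ by taking $G\equiv 1$, whereas the paper verifies $\E[\cL^N G(\eta)]=0$ directly (citing Echeverr\'ia's criterion, which your appeal to the Markov property implicitly relies on as well) and then derives the symmetric/anti-symmetric decomposition.
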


\begin{proof}
According to~\cite{Ech}, for the first statement it is enough to prove that $\E [\gen G(\swn)] =0$ 
for all $C^2$ cylinder functions and 
we will prove the above relation for $\gens$ and $\gena$ separately, beginning with the first. 
Let $G$ be a cylinder function depending on only finitely many Fourier components of $\eta$. 
In the Gaussian integration by parts formula~\eqref{eq:intbyparts} set $F= D_{k}G$ and 
$G\equiv 1$, so that we have
\begin{align*}
\E[D_{-k} D_{k} G(\eta)] 
&= \E[D_{k} G(\eta) \eta_{-k}],
\end{align*}
from which $\E\gens G(\swn)=0$ follows.
For the operator $\gena$ we use again Gaussian integration by parts (this time with $F=G$ and $G(\eta)=g(\eta_m,\eta_\ell)=\eta_m\eta_\ell$ in the notation of~\eqref{eq:intbyparts}) to obtain 
\begin{equation}
\label{e:antisym}\begin{split}
\E[ \eta_m \eta_\ell D_{-m-\ell}G(\eta) ]
&= \E[\eta_m \eta_\ell \eta_{-m-\ell} G(\eta) - G(\eta) D_{-m-\ell} (\eta_m \eta_\ell)]\\
&=\E[\eta_m \eta_\ell \eta_{-m-\ell} G(\eta)]
\end{split}
\end{equation}
where the last passage is a consequence of the choice $\eta(e_0)=0$. 
Now, the function $G$ on the right hand side does not depend on either $m$ or $\ell$. We claim that the following stronger
statement holds, for any $\{\eta_k\}_{k\in\Z^2_0}$ we have 
\begin{equation}\label{e:simmetry}
\sum_{m,\ell \in \Z_0^2} \nonlin_{m,\ell} \eta_m \eta_\ell \eta_{-m-\ell}=0\,.
\end{equation}
Assuming~\eqref{e:simmetry}, upon multiplying by $\nonlin_{m,l}$ and summing over all $\ell$ and $m\in\Z^2_0$ 
at both sides of~\eqref{e:antisym}, we would obtain $\E\gena G(\swn)=0$ and thus conclude the proof of invariance of white noise
for the dynamics of $u^N$.
 
Let us prove~\eqref{e:simmetry} (an alternative proof is provided in Appendix~\ref{sec:alter}). 
Observe that $f(m,\ell)\eqdef\eta_m \eta_\ell \eta_{-m-\ell}$ satisfies the symmetry relation
$f(m,\ell) = f(-m-\ell,\ell) = f(m,-m-\ell)$, so that it suffices to check that $\nonlin_{m,\ell}$ is antisymmetric once we sum over all 
permutation of $m,\ell$ and $-m-\ell$. We compute
\begin{align}\label{e:gena:antisym:prf:1}
\nonlin_{m,l} =  -\frac{(m_1+\ell_1)^2m_1 \ell_1}{|m+\ell ||m||\ell |}
+
\frac{(m_2+\ell_2)^2 m_2 \ell_2}{|m+\ell ||m||\ell |}
+
\frac{(m_1+\ell_1)^2 m_2 \ell_2}{|m+\ell ||m||\ell |}
-
\frac{(m_2+\ell_2)^2 m_1 \ell_1}{|m+\ell ||m||\ell |}
\end{align}
where $\ell_1,\ell_2$ and $m_1,m_2$ are the components of $\ell$ and $m$ respectively. 
Denote these summands by $\nonlin_{m,\ell,(i)}$, $i=1,2,3,4$. 
Then, for $i=1,2$, $\nonlin_{m,\ell,(i)}+\nonlin_{-m-\ell,\ell,(i)}+\nonlin_{m,-m-\ell,(i)}=0$, while
\[
\nonlin_{m,\ell,(3)} + \nonlin_{-m-\ell,\ell,(3)} + \nonlin_{m,-m-\ell,(3)}
=
2m_1\ell_1 m_2 \ell_2 - m_1^2 \ell_2^2 - \ell_1^2 m_2^2,
\]
which cancels the corresponding term coming from $\nonlin_{m,l,(4)}$. 
\newline

We next show that \eqref{e:gens} and \eqref{e:gena} are indeed the symmetric and antisymmetric part of $\gen$. 
The first claim follows directly from 
\begin{align*}
\E[ D_{-k} D_{k} F(\eta) G(\eta)]
&=
\E[D_{k} F(\eta) G(\eta) \eta_{k}]
-
\E[D_{k} F(\eta) D_{-k} G(\eta)]
\end{align*}
so that
\[
\E[\gens F(\eta) G(\eta) ]
=
-\frac{\donst}{2} \sum_{k \in \Z^2} |k|^2 \E[D_{k}F(\eta) D_{-k} G(\eta)],
\]
and the latter is clearly symmetric. For the antisymmetric part we compute
\begin{align*}
\E[ \eta_m \eta_\ell D_{-m-\ell} F(\eta) G(\eta)]
=
\E[\eta_m \eta_\ell \eta_{-m-\ell} F(\eta) G(\eta) ]
-\E[ F(\eta) D_{-m-\ell}(G(\eta)\eta_m \eta_\ell) ]\,.
\end{align*}
Notice that, summing up over $\ell,m\in\Z_0^2$ the first term on the right hand side drops out for the same 
reason as in the proof of stationary. We can apply the Leibniz rule to the second and, recalling that we chose $\eta(e_0)=0$, we get 
\[
\E[\gena F(\eta) G(\eta)]
=
-\const \sum_{m,\ell \in \Z^2} \nonlin_{m,\ell} \E[\eta_m \eta_\ell F(\eta) D_{-m-\ell} G(\eta)]
=
-\E[ F(\eta) \gena G(\eta)]\,,
\]
so that the proof is concluded. 
\end{proof}

\begin{remark}
The previous lemma provides the second advantage of our approximation scheme, namely the fact that the invariant measure
of $u^N$ is independent of $N$. If we decided, in addition to smoothing the nonlinearity, to cut the high Fourier modes 
of the space-time white noise $\xi$ appearing in~\eqref{e:AKPZ:u} (i.e. replace $\xi$ by $\Pi^N\xi$), 
so that $u^N_k\equiv 0$ for every $k$ with $|k|_\infty\geq N$, then the same proof shows that the invariant 
measure would be $\Pi^N\eta$.
\end{remark}

Thanks to the previous lemma and classical theory of stochastic analysis, we immediately deduce the following proposition. 

\begin{proposition}
For any deterministic initial condition $u^N(0)=\{u_k^N(0)\}_{k\in\Z^2_0}$, the solution $t\mapsto u^N(t)=\{u_k^N(t)\}_{k\in\Z^2_0}$
of~\eqref{e:kpz:u} exists globally in time. 
\end{proposition}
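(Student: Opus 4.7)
The plan is to decouple the infinite system~\eqref{e:kpz:u} into two non-interacting blocks and treat them separately. Since the indicator $\indN{\ell,m,\ell+m}$ appearing in $\nonlin_{\ell,m}$ forces $\cN^N_k[u^N]\equiv 0$ whenever $|k|_\infty>N$, every Fourier mode in this range solves an independent Ornstein--Uhlenbeck equation and hence exists globally in time with no further work. The nontrivial block is the finite-dimensional system for $\{u^N_k\}_{|k|_\infty\le N}$: its drift and diffusion are polynomial and therefore locally Lipschitz, so classical SDE theory produces a unique maximal strong solution up to an explosion time $\tau_\infty$. What remains is to show $\tau_\infty=+\infty$ almost surely.

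To rule out finite-time blow-up I would use the Lyapunov function $V(u)\eqdef\sum_{|k|_\infty\le N}|u_k|^2$, which is real-valued (since $u_{-k}=\overline{u_k}$) and proper on the low-mode phase space. Applying It\^o's formula to each $|u^N_k|^2=u^N_k u^N_{-k}$ and summing yields
\begin{equation*}
\dd V(u^N_t)=\Big[-\donst\sum_{|k|_\infty\le N}|k|^2|u^N_k|^2+2\const\sum_{|k|_\infty\le N}u^N_{-k}\cN^N_k[u^N]+\donst\sum_{|k|_\infty\le N}|k|^2\Big]\dd t+\dd M_t,
\end{equation*}
for a local martingale $M$ arising from the Brownian increments. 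The first bracketed term is non-positive and the third is a finite constant $C_N$ depending only on $N$ and $\donst$. The crucial observation is that the middle (nonlinear) term vanishes identically: expanding via~\eqref{e:nonlinF} one has
\begin{equation*}
\sum_{|k|_\infty\le N}u^N_{-k}\cN^N_k[u^N]=\sum_{\ell,m\in\Z^2_0}\nonlin_{\ell,m}\,u^N_\ell u^N_m u^N_{-\ell-m},
\end{equation*}
which is precisely the sum appearing in~\eqref{e:simmetry} with $u^N$ in place of $\eta$. Since the three-cycle antisymmetry of $\nonlin_{\ell,m}$ proved inside Lemma~\ref{lem:invariant} is a purely algebraic identity (and the cutoff $\indN{\ell,m,\ell+m}$ is symmetric under the three-cycle), the cancellation transfers verbatim.

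Combining these ingredients gives $\dd V(u^N_t)\le C_N\,\dd t+\dd M_t$. Setting $\tau_R\eqdef\inf\{t\ge 0:V(u^N_t)\ge R\}$, optional stopping yields $\E[V(u^N_{t\wedge\tau_R})]\le V(u^N_0)+C_N t$, and Markov's inequality then forces $\P(\tau_R\le T)\le(V(u^N_0)+C_N T)/R\to 0$ as $R\to\infty$ for every finite $T>0$, whence $\tau_\infty=+\infty$ almost surely and the solution exists globally. The principal obstacle is that a generic finite-dimensional SDE with quadratic drift could well blow up in finite time; the whole argument hinges on the fact that the same algebraic identity~\eqref{e:simmetry} which underlies the white-noise invariance of $u^N$ also encodes a hidden $L^2$-conservation law, and it is precisely this conservation that forbids the quadratic term from inflating the total energy.
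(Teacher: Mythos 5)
Your proof is correct, but it follows a genuinely different route from the paper's. The paper proves global existence abstractly: it invokes the fact (Lemma~\ref{lem:invariant}) that the system~\eqref{e:kpz:u} admits an invariant measure with finite moments of all orders, combines this with the strong Feller property of the finite-dimensional block $\{u^N_k : |k|_\infty\le N\}$ (quoting the literature for the latter, the high modes being trivially independent Ornstein--Uhlenbeck processes), and concludes non-explosion from these two facts. You instead give a direct, self-contained Lyapunov argument on the low-mode block: the same algebraic three-cycle cancellation~\eqref{e:simmetry} that the paper uses to prove invariance of the white noise also shows that the quadratic term makes no contribution to $\dd\sum_{|k|_\infty\le N}|u^N_k|^2$, so It\^o's formula, the stopping-time localisation and Markov's inequality rule out blow-up for \emph{any} deterministic initial datum. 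This is essentially the alternative the authors themselves point to in the remark following the proposition, where they note that \eqref{e:simmetry} expresses dissipativity of the nonlinearity and could be used to prove global existence directly. Your version is more elementary (no strong Feller input, no appeal to the invariant measure, hence no implicit passage from ``$\mu$-a.e.\ initial condition'' to ``every initial condition''), at the cost of redoing an explicit energy computation; the paper's version is shorter on the page and reuses machinery it needs anyway. The only points worth making explicit in your write-up are (i) that the cyclic symmetry of the cutoff $\indN{\ell,m,\ell+m}$ under $(\ell,m)\mapsto(-\ell-m,\ell)$ is what lets \eqref{e:simmetry} apply verbatim with the truncated coefficients (you do note this), and (ii) that you are implicitly restricting to initial data satisfying the reality constraint $u^N_{-k}(0)=\overline{u^N_k(0)}$, which is the intended setting since $u^N$ is the Fourier transform of a real field and is needed for $V(u)=\sum_{|k|_\infty\le N}u_ku_{-k}$ to be the squared norm you stop against.
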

\begin{proof}
The proof is a consequence of the fact that the system~\eqref{e:kpz:u} has an invariant measure with finite moments of 
all orders and satisfies the strong Feller property. 
The latter is obvious for $\{u^N_k\,:\,|k|_\infty> N\}$, since they are independent Ornstein-Uhlenbek
processes, while for $\{u^N_k\,:\,|k|_\infty\leq N\}$ it follows by~\cite{DPEZ}.
\end{proof}

\begin{remark}
The equality~\eqref{e:simmetry} in the proof of Lemma~\ref{lem:invariant} shows that the nonlinearity~\eqref{e:nonlin} 
is {\it dissipative}. This fact can be used to 
prove the previous proposition directly at the level of the stochastic PDE~\eqref{e:AKPZ:u} either following the strategy
in~\cite[Section 4]{GubinelliJara2012}, or using the fact that, by Lemma~\ref{lem:invariant}, the equation has an invariant measure,  together with the strong Feller property. 
\end{remark}

Similarly to~\cite{GPGen} we want to improve our understanding of the generator associated to $u^N$. More specifically, 
we would like to know how $\gen$ acts on elements of $L^2(\eta)$ and ensure that $\gen$ is reasonably well behaved 
when applied to elements belonging to a homogeneous Wiener chaos.

For that, recall that the Fourier transform $\CF$ maps $\fock_n= L^2_\sym(\T^{2n})$ (isometrically) 
into $ \ell^2((\Z^2)^n)$, i.e. $\CF(\cdot)=\hat\cdot : L^2_\sym(\T^{2n}) \to \ell^2((\Z^2)^n)$. 
Moreover, if $\CO$ is an operator acting on (a subspace of) $L^2(\swn)$, we will denote by 
$\mathfrak  O$ the operator on $\fock$ such that for all 
$\varphi\in \fock$ one has $\CO \sint(\varphi)= \sint(\mathfrak O\varphi)$.

\begin{lemma}\label{lem:generator:fockspace}
For any $n \in \N$ the operator $\gens$ leaves $\wc^n$ invariant and $\gena$ maps $\wc^n$ into $\wc^{n-1}\oplus \wc^{n+1}$, and one has for any $K \in \fock_n$ the identity
\begin{align}\label{e:gens:fock}
\gens \sint_n	K = \frac{\donst}{2} \sint_n \Delta K.
\end{align}
Moreover, one can write $\gena = \genap+ \genam$, where $\genap$ increases and $\genam$ decreases the order of the Wiener chaos by one, 
i.e. $\genap:\wc^n\to\wc^{n+1}$ and $\genam:\wc^n\to\wc^{n-1}$, and
their action in Fock space representation, denoted by $\genapf$ and $\genamf$ respectively, satisfy
\begin{align}\label{e:genap:fock}
\CF(\genapf K)(k_{1:n+1}) 
&= n \const 
\nonlin_{k_1, k_2} \hat K(k_1 + k_2, k_{3:n+1})  
\\
\label{e:genam:fock}
\CF(\genamf K)(k_{1:n-1})
&=
2n(n-1) \const
\sum_{\ell+m = k_1}
\nonlin_{k_1, -\ell} \hat K(\ell,m, k_{2:n-1}),
\end{align}
where we used the short-hand notation $k_{1:n+1}= (k_1,\ldots, k_{n+1})$. 
Finally, the operator $-\genap$ is the adjoint of $\genam$ in $L^2(\swn)$.
\end{lemma}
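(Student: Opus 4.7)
The overall plan is to compute the action of $\gens$ and $\gena$ on a generic Wiener chaos element $F = \sint_n(K)$ directly from the definitions \eqref{e:gens}--\eqref{e:gena}, using two basic tools: the Malliavin derivative rule on Wiener chaos, which in Fourier reads $D_k \sint_n(K) = n\,\sint_{n-1}(\hat K(k,\cdot))$, and the product formula \eqref{eq:multiplicationiterated}.

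For the symmetric part $\gens$: iterating the derivative rule gives $D_{-k} D_k \sint_n(K) = n(n-1)\sint_{n-2}(\hat K(k,-k,\cdot))$, while the product formula applied to $\eta_{-k} D_k \sint_n(K) = n\,\sint_1(e_k)\sint_{n-1}(\hat K(k,\cdot))$ splits this into an $\wc^n$ piece plus $n(n-1)\sint_{n-2}(\hat K(k,-k,\cdot))$. The two $\wc^{n-2}$ contributions cancel exactly in the combination $-\eta_{-k} D_k + D_{-k} D_k$, and summing the remaining $\wc^n$ piece over $k$ with weight $|k|^2$ produces, thanks to the symmetry of $K$, the Laplacian of $K$, giving $\gens\sint_n(K) = \frac{\donst}{2}\sint_n(\Delta K)$; in particular $\gens$ preserves $\wc^n$.

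For $\gena$, I start by writing $v_m v_l = \,:\!v_m v_l\!:\, + \delta_{m+l,0}$; the constant part vanishes once summed against $\nonlin_{m,l}$ because the factor $|m+l|$ in \eqref{e:nonlinCoefficient} forces $\nonlin_{m,-m}=0$. The Wick part lies in $\wc^2$, so multiplying by $D_{-m-l}\sint_n(K)\in\wc^{n-1}$ and applying \eqref{eq:multiplicationiterated} yields an a priori three-term expansion into $\wc^{n+1}\oplus\wc^{n-1}\oplus\wc^{n-3}$. The no-contraction ($p=0$) and single-contraction ($p=1$) terms produce, after a routine Fourier-space computation and the change of variables matching $\ell+m=k_1$ via the symmetry $\nonlin_{-a,-b}=\nonlin_{a,b}$, exactly the kernels \eqref{e:genap:fock} and \eqref{e:genam:fock}. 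The main obstacle is showing the $p=2$ ($\wc^{n-3}$) contribution vanishes; this reduces to the combinatorial identity
\begin{equation}
\nonlin_{a,b} + \nonlin_{a,c} + \nonlin_{b,c} = 0 \qquad\text{whenever } a+b+c=0 \text{ in } \Z^2_0,
\end{equation}
which, after clearing denominators, is equivalent to $|c|^2 q(a,b) + |b|^2 q(a,c) + |a|^2 q(b,c) = 0$ with $q(x,y) = x_2 y_2 - x_1 y_1$. Substituting $c=-a-b$ and expanding, both sides collapse to $2(a_2^2 b_2^2 - a_1^2 b_1^2)$, so the identity holds. This is the algebraic manifestation of the anisotropic structure $Q=\mathrm{diag}(1,-1)$ and is specific to $\det Q\leq 0$.

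Finally, the adjoint relation $\genam^{\ast} = -\genap$ is a direct consequence of the antisymmetry of $\gena$ in $L^2(\eta)$ established in Lemma~\ref{lem:invariant} together with the orthogonality of distinct Wiener chaoses: for $F\in\wc^n$ and $G\in\wc^{n+1}$ one has $\E[(\gena F)G] = \E[(\genap F)G]$ and $\E[F(\gena G)] = \E[F(\genam G)]$, so the identity $\E[(\gena F)G] = -\E[F(\gena G)]$ of Lemma~\ref{lem:invariant} rearranges to $\E[(\genap F)G] = -\E[F(\genam G)]$, as required.
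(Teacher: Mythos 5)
Your proposal is correct and follows essentially the same route as the paper: both compute $\gens\sint_n K$ and $\gena\sint_n K$ from the Malliavin derivative rule on chaos together with the product formula \eqref{eq:multiplicationiterated}, kill the $\wc^{n-3}$ contraction term via the symmetrized antisymmetry of $\nonlin$ on triples summing to zero (your identity is exactly \eqref{e:simmetry}, already established in Lemma~\ref{lem:invariant}, so you could simply cite it rather than re-derive it), and obtain $\genam^{\ast}=-\genap$ from the antisymmetry of $\gena$ plus orthogonality of distinct chaoses. The only cosmetic difference is that the paper reduces by density to kernels of the form $K=\hot$ and computes with the single function $h$, whereas you work with a general symmetric kernel $K$ directly.
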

\begin{proof}
It suffices to show \eqref{e:gens:fock}, \eqref{e:genap:fock} and \eqref{e:genam:fock} for a kernel of the type $K = \hot$ 
for some $h \in H$. 
As a consequence of~\eqref{eq:Hermite} and the fact that Hermite polynomials satisfy $H_n' = H_{n-1}$
(see~\cite[Equation (1.2)]{Nualart2006}), the Malliavin derivatives of stochastic integrals of such kernels can be written as
\begin{align*}
D_k \sint_n(\hot)
&=
n \sint_{n-1}(\hotm) h_{k}
\\
D_{-k} D_k \sint_n(\hot)
&=
n(n-1) \sint_{n-2}(\hotmm) h_k h_{-k}\,.
\end{align*}
We start by analysing $\gens$. To show \eqref{e:gens:fock}, first note that
\begin{equation}
\sint_1(\Delta h) = I_1\Big(-\sum_{k\in\Z^2}|k|^2h_ke_k\Big)= 
-\sum_{k\in\Z^2}|k|^2 h_k \sint_1(e_k)= -\sum_{k\in\Z^2}|k|^2 h_k \swn_{-k},
\end{equation}
where the last equality follows from~\eqref{eq:Hermite}, and the fact that $H_1$ is the identity.
Using the above observation and~\eqref{eq:multiplicationiterated}, we thus see that
\begin{align}
-\sum_{k\in \Z^2} |k|^2 \swn_{-k} &D_k I_n(\hot) 
=
n \sint_{n-1}( \hotm )
\sint_1 (\Delta h) \nonumber
\\
&=
n\sint_n(\hotm \otimes \Delta h)
+
n(n-1)
\sint_{n-2}(\hotmm) \langle h, \Delta h\rangle. \label{e:LzeroPart1}
\end{align}
On the other hand, a similar calculation shows 
\[
\sum_{k \in \Z^2} |k|^2 D_k D_{-k} \sint_n(\hot)
=
-n(n-1) \sint_{n-2}(\hotmm)\langle h, \Delta h\rangle\,.
\]
Hence, the first summand in~\eqref{e:LzeroPart1} drops out and we obtain the identity~\eqref{e:gens:fock}. 
For the operator $\gena$, proceeding as above we see that 
\begin{align*}
\gena  I_n (\hot)
&=
\const 
n\sum_{\ell,m\in\Z^2_0} \nonlin_{\ell,m} \eta_\ell\eta_m \sint_{n-1}(\hotm)h_{-m-\ell}\\
&=
\const n \sint_2\left(\sum_{\ell,m\in\Z^2_0}\nonlin_{\ell,m}\,h_{-m-\ell}\,e_{-m} \otimes e_{-\ell}\right) \sint_{n-1}(\hotm)\,.
\end{align*} 
By the product rule~\eqref{eq:multiplicationiterated}, we get
\begin{align*}
\gena  I_n (\hot)
=&
\const
n \sint_{n+1}\left(\sum_{\ell,m} \nonlin_{l,m}  h_{-\ell-m}\, \left(e_{-l} \otimes e_{-m} \otimes \hotm\right)\right)
\\
&
+\const
2n(n-1) \sint_{n-1}\left(\sum_{\ell,m} \nonlin_{\ell,m} h_{\ell} h_{-\ell-m}\, \left(e_{-m} \otimes \hotmm\right) \right)
\\
&
+\const
n(n-1)(n-2) \left(\sum_{\ell,m} \nonlin_{\ell,m} h_{\ell} h_{m} h_{-\ell-m}\right) \sint_{n-3}(\otimes^{n-3} h). 
\end{align*}
Notice at first that the third summand disappears thanks to~\eqref{e:simmetry}. Moreover, the first and 
second summand live in $\cH_{n+1}$ and $\cH_{n-1}$ respectively and, upon taking the Fourier transform of the integrands, 
we immediately obtain \eqref{e:genap:fock} and \eqref{e:genam:fock}. 

At last, $\genam$ and $-\genap$ are adjoint to each other since $\gena=\genam + \genap$ is an anti-symmetric operator 
on $L^2(\mu)$ and, as noted above, one has $\genam: \wc^n \to \wc^{n-1}$ and $\genap : \wc^n \to \wc^{n+1}$. Indeed, 
if $F(\eta)=\sum_n \sint_n(f_n)$ and $G(\eta)=\sum_n \sint_n(g_n)$, then
\begin{equs}
\E[\genap F(\eta) G(\eta)]&=\sum_n \E[\genap \sint_n(f_n)\sint_{n+1}(g_{n+1})]=\sum_n \E[\gena \sint_n(f_n)\sint_{n+1}(g_{n+1})]\\
&=-\sum_n \E[ \sint_n(f_n)\gena\sint_{n+1}(g_{n+1})]=-\sum_n \E[ \sint_n(f_n)\genam\sint_{n+1}(g_{n+1})]\\
&=-\E[ F(\eta) \genam G(\eta)]
\end{equs}
and the proof is concluded. 
\end{proof}

\section{Upper bounds and tightness of the approximating sequence}\label{sec:Tightness}

In this section we want to show how to obtain suitable bounds 
(depending on the coupling constants $\const$ and $\donst$) on the time integral of 
(non-linear) functionals of the solution $u^N$ of~\eqref{e:AKPZ:u}. The point we want to make is that the technique exploited 
in~\cite{GubinelliJara2012} is sufficiently flexible to be able to handle even cases in which the limiting equation 
is {\it critical}. 

To get a feeling of the procedure followed in the aforementioned paper, consider a generic functional $F$ 
in the domain of the generator $\cL^N$ of the Markov process $\{u^N(t)\}_{t\in\R_+}$ solving~\eqref{e:kpz:u}, whose 
symmetric and antisymmetric part, with respect to the invariant measure $\swn$, are $\gens$ and $\gena$ respectively 
(see Lemma~\ref{lem:invariant}). 
The main idea is that the relation between the forward 
and the backward processes ($u^N(t)$ and $u^N(T-t)$)
can be used in the representation of $F(u^N)$ given by Dynkin's (or It\^o's) formula (see~\eqref{e:Ito} and~\eqref{e:Ito:backward})
in order to get rid of both 
the boundary terms and the terms containing $\gena F(u^N)$. In this way,
the time average of $\gens F(u^N)$ can be expressed as the sum of two martingales (see~\eqref{eq:Ito}) 
which in turn can be controlled 
via their quadratic variation. 
The latter is explicit and depends only on $u^N$ evaluated at a {\it single point in time}. 
The knowledge of the invariant measure for the process is then the key to obtain a bound on (moments of)
the quadratic variation of these martingales (see Lemma~\ref{lem:ito-trick}). At last, once estimates for quantities of the form 
$\int_0^T\gens F(u^N(s))\dd s$ are available, analogous estimates for $\int_0^T V(u^N(s))\dd s$, for more general functionals $V$, 
can be consequently achieved if one is able to determine a solution $F$ to the {\it Poisson equation} given by 
\begin{equation}\label{e:Poisson}
\gens F = V\,.
\end{equation}
In what follows, we will first describe in more detail the strategy outlined above and then
show how we can take advantage of these techniques in the context of the anisotropic KPZ equation. 
\newline

Let $F=F(t,\cdot)$ be a cylinder function depending smoothly 
on time. Thanks to It\^o's formula (and the Fourier representation of $u^N$ given in~\eqref{e:kpz:u}), we can write
\begin{equation}\label{e:Ito}
F(t,u^N(t)) 
= F(0,u^N(0))+
\int_0^t\, \left(\partial_s +\gen\right)F \,(s,u^N(s))\, \dd s + \donsth \martf_t(F),
\end{equation}
where $\martf _\cdot(F)$ is the martingale (depending on $F$) defined by
\begin{equation}\label{e:Mart}
\dd \martf_t(F)
=
\sum_{k\in\Z^2}
(D_k F)(t, u^N(t) )  |k| \dd B_k(t)
\end{equation}
whose quadratic variation is
\begin{align}\label{e:energy}
\dd\qvar{\martf (F)}_t
=
\energy(F)(t,u^N(t))\, \dd t
\eqdef
\sum_{k\in\Z^2}  |k|^2
|D_k F(t,u^N(t))|^2\,
\dd t\,.
\end{align}
For fixed $T>0$, it follows from Lemma~\ref{lem:invariant} that the backward process $\bar u^N(t) \eqdef u^N(T-t)$ 
is itself a Markov process whose generator is given by the adjoint of $\gen$, $(\gen)^\ast=\gens - \gena$. In particular, 
applying again It\^o's formula, but this time on $F(t,\bar u^N(t))$, we get
\begin{align}\label{e:Ito:backward}
F(t,\bar u^N(t)) = F(0,\bar u^N(0)) + \int_0^t (\partial_s+ \gens - \gena)F (s,\bar u^N(s))\, ds + \donsth \martb_t(F),
\end{align}
where $\martb(F)$ is a martingale with respect to the {\it backward} filtration, generated by the process $\bar u^N$, 
and its  quadratic variation is given by $\dd\qvar{\martb(F)}_t
=
\energy(F)(T-t,u^N(T-t))\, \dd t
$.
Summing up (\ref{e:Ito}) and (\ref{e:Ito:backward}) one obtains the following analog of~\cite[eq. (10)]{GubinelliJara2012}
\begin{equation}\label{eq:Ito}
2 \int_0^t \gens F (s,u^N(s)) ds
=
\donsth(-\martf_t(F) + \martb_{T-t}(F) - \martb_{T}(F))\,.
\end{equation}
The right hand side of~\eqref{eq:Ito} can be bounded by the Burkholder-Davis-Gundy inequality and
this yields~\cite[Lemma 2]{GubinelliJara2012} (which in turn was inspired by~\cite[Lemma 4.4]{CLO}), that we here recall. 

\begin{lemma}\label{lem:ito-trick}
\emph{(It\^o-trick)}
For any $p\geq 2$, $T>0$ and cylinder function $F=F(t,\cdot)$ smoothly depending on time, the following estimate holds
\begin{align}\label{e:ito-trick}
\Exp \Big[ \sup_{t \le T} \Big|\int _0 ^t \gens F(s,u^N(s)) \dd s\Big|^p\Big]^\frac{1}{p}
\lesssim
\donst^{\frac{1}{2}}
T^{\frac{1}{2}} \sup_{s\in[0,T]}\E \left[| \energy (F)(s,\swn) |^{\frac{p}{2}}\right]^{\frac{1}{p}}\,.
\end{align}
Moreover, in the specific case in which $F(t,x)=\sum_{i\in I}e^{a_i(T-t)}\tilde F_i(x)$, where $I$ is an index set, $a_i\in \R$ 
and $F_i$ is a cylinder function for every $i\in I$, we have 
\begin{equation}\label{e:itoconvolution}
\begin{split}
\Exp \Big[  \Big|\int _0 ^T \sum_{i\in I}e^{a_i(T-s)}&\gens \tilde F_i(u^N(s)) \dd s\Big|^p\Big]^\frac{1}{p}\\
&\lesssim
\donst^{\frac{1}{2}}
\left(\sum_{i\in I}\Big(\frac{e^{2a_iT}-1}{2a_i}\Big) \E\left[ | \energy (\tilde F_i)(\swn) |^{\frac{p}{2}}\right]^{\frac{2}{p}}\right)^{\frac{1}{2}}\,.
\end{split}
\end{equation}
In both cases, the proportionality constant hidden in $\lesssim$ is independent of both $N$ and $F$.   
Here and below we use the symbol $\Exp$ to denote expectations with respect to the law of $\{u^N(t)\}_{t\in\R_+}$ and $\E$ 
for expectations with respect to the law of $\eta$.
\end{lemma}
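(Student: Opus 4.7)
The plan is to use the forward--backward It\^o identity~\eqref{eq:Ito} to rewrite the time integral of $\gens F$ as a linear combination of two martingales, and then apply Burkholder--Davis--Gundy (BDG) together with stationarity of the law of $\swn$ (Lemma~\ref{lem:invariant}) to estimate their quadratic variations by equilibrium moments of the energy functional $\energy(F)$.

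For the first bound, taking the sup in $t\le T$ inside the $p$-th moment of~\eqref{eq:Ito} and applying BDG to each of $\martf(F)$ and $\martb(F)$ with respect to its own filtration gives
\begin{equation*}
\Exp\Big[\sup_{t\le T}\Big|\int_0^t \gens F(s,u^N(s))\dd s\Big|^p\Big]^{1/p}
\lesssim \donst^{1/2}\,\Exp\Big[\Big(\int_0^T \energy(F)(s,u^N(s))\dd s\Big)^{p/2}\Big]^{1/p}.
\end{equation*}
Since $p/2\ge 1$, Jensen's inequality applied to the probability measure $\dd s/T$ on $[0,T]$ brings the $p/2$-power inside the time integral at the cost of a factor $T^{1/2}$, and stationarity then replaces $u^N(s)$ by $\swn$ and yields the first claim after bounding the integral by $T$ times the sup in $s$.

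For the second estimate I would apply the same identity to the time-dependent $F(s,x)=\sum_i e^{a_i(T-s)}\tilde F_i(x)$: the extra drift $\partial_s F=-\sum_i a_i e^{a_i(T-s)}\tilde F_i$ from the time dependence appears with the same sign in both the forward and backward It\^o formulas and therefore drops out upon subtraction, so~\eqref{eq:Ito} remains valid verbatim. The quadratic variation of $\martf(F)$ is then $\int_0^T\sum_k|k|^2\big|\sum_i e^{a_i(T-s)}D_k\tilde F_i(u^N(s))\big|^2\dd s$. To extract the individual weights $(e^{2a_iT}-1)/(2a_i)$, I would dominate the $\ell^2(k,|k|^2)$-norm of the inner sum by $\sum_i e^{a_i(T-s)}\sqrt{\energy(\tilde F_i)(u^N(s))}$ via Minkowski, apply Jensen's inequality in time against the unnormalised measure $e^{2a_i(T-s)}\dd s$---whose total mass is exactly $(e^{2a_iT}-1)/(2a_i)$---to pull the $p/2$-power inside, and invoke stationarity to replace $u^N(s)$ by $\swn$ on each term.

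The main obstacle will be handling the cross terms $e^{(a_i+a_j)(T-s)}D_k\tilde F_i(x)\overline{D_k\tilde F_j(x)}$ for $i\ne j$ in the expansion of the squared sum: the exponentials $\{e^{a_i(T-\cdot)}\}_i$ are not $L^2$-orthogonal on $[0,T]$ in general, so the double sum over $(i,j)$ does not automatically diagonalise into the claimed $\ell^2$ form in $i$. The way round it is to exploit the positive semidefiniteness of the bilinear form $(f,g)\mapsto\sum_k|k|^2D_kf(x)\overline{D_kg(x)}$---which is a Gram form in $f,g$---and combine it with a Cauchy--Schwarz rearrangement that bounds the cross terms by the geometric mean of the diagonal ones; combined with the Jensen step above this reduces the double sum to a single sum of the required structure and delivers the stated expression.
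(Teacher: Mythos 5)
Your argument for \eqref{e:ito-trick} is fine and is essentially the paper's: the forward--backward identity \eqref{eq:Ito}, Burkholder--Davis--Gundy for both martingales, Jensen in time against $\dd s/T$, and stationarity (the paper simply defers this to \cite[Lemma~2]{GubinelliJara2012}).

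For \eqref{e:itoconvolution}, however, your treatment of the cross terms is a genuine gap, and it is precisely the point where your route diverges from (and falls short of) the paper's. Cauchy--Schwarz for the Gram form only gives $\big|\sum_k|k|^2D_k\tilde F_i\,\overline{D_k\tilde F_j}\big|\le \energy(\tilde F_i)^{1/2}\energy(\tilde F_j)^{1/2}$, hence
\begin{equation*}
\energy(F)(s,\cdot)\;\le\;\Big(\sum_{i\in I}e^{a_i(T-s)}\energy(\tilde F_i)(\cdot)^{1/2}\Big)^{2},
\end{equation*}
which is the square of an $\ell^1$-type sum in $i$; no Jensen or H\"older step in the time variable can convert this into the diagonal quantity $\sum_i\frac{e^{2a_iT}-1}{2a_i}\,\E[|\energy(\tilde F_i)(\swn)|^{p/2}]^{2/p}$ with a constant independent of $I$. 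To see that the proposed rearrangement cannot work, take all $\tilde F_i$ equal and all $a_i$ equal: your bound is then of order $|I|$, while the right-hand side of \eqref{e:itoconvolution} is of order $|I|^{1/2}$ (and so is matched by the left-hand side only when the cross terms genuinely give no contribution). The paper's proof does not argue from positivity at all: it rests on the identity $\dd\qvar{\martf(F)}_t=\sum_{i\in I}e^{2a_i(T-t)}\energy(\tilde F_i)(u^N(t))\,\dd t$, i.e.\ the off-diagonal terms $\sum_k|k|^2D_k\tilde F_i\,\overline{D_k\tilde F_j}$, $i\neq j$, are taken to drop out --- which is how the lemma is meant to be read and how it is used in Theorem~\ref{thm:tightness}, where the $\tilde F_i$ are Fourier-localised --- and then concludes by Minkowski's integral inequality in time plus stationarity, which is exactly your Jensen step. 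So the missing ingredient is an orthogonality property of the family $\{\tilde F_i\}$, not a clever handling of a generic Gram matrix; without it the diagonal bound is simply unavailable.

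A minor additional slip: the $\partial_sF$ terms in \eqref{eq:Ito} do not ``appear with the same sign and drop out upon subtraction''; the cancellation comes from applying It\^o's formula to the time-reversed function along the time-reversed process, which flips the sign of the time derivative so that the two contributions cancel when the identities are summed. Since \eqref{eq:Ito} is stated in the paper for time-dependent cylinder functions, this affects only your explanation, not the validity of the step.
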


\begin{remark}
The crucial aspect of the previous lemma is that we are able to bound the expectation of functionals of $u^N$ with respect 
to the {\it space-time} law of $u^N$ in terms of the expectation with respect to the {\it sole} invariant measure, so that 
explicit computations become indeed possible. 
\end{remark}

\begin{proof}
The proof of~\eqref{e:ito-trick} is that of equation (11) in~\cite[Lemma 2]{GubinelliJara2012}. 
The second bound can be obtained following the proof of~\cite[Lemma 2, eq (12)]{GubinelliJara2012} and 
we provide the details for completeness.
Notice that, given $t\in[0,T]$ and $F$ as in the statement, the left hand side of~\eqref{e:itoconvolution} is bounded from above by 
\begin{align*}
\Exp \Big[  \sup_{t\leq T}\Big|\int _0 ^t \sum_{i\in I}e^{a_i(T-s)}\gens &\tilde F_i(u^N(s)) \dd s\Big|^p\Big]^\frac{1}{p}=
\Exp \Big[  \sup_{t\leq T}\Big|\int _0 ^t \gens F(s,u^N(s)) \dd s\Big|^p\Big]^\frac{1}{p}\\
&\lesssim \donst^{\frac{1}{2}}\Exp \Big[\Big|\int_0^T\energy(F(s,\cdot))(u^N(s))\dd s\Big|^{\frac{p}{2}}\Big]^{\frac{1}{p}}\\
&=
\donst^{\frac{1}{2}}\Exp\Big[\Big|\int_0^T  \sum_{i\in I}e^{2a_i(T-s)}\energy( \tilde F_i)(u^N(s))\dd s\Big|^{\frac{p}{2}}\Big]^\frac{1}{p}
\end{align*}
where for the first equality, we used the fact that $\gens$ acts only on the spatial variable, the subsequent bound follows 
by Burkholder-Davis-Gundy inequality and the last comes from the fact that 
$\dd \langle M_\cdot(F)\rangle_t=\sum_{i\in I}e^{2a_i(T-t)}\energy( \tilde F_i)(u^N(t))\dd t$. The right hand side of the latter 
is trivially bounded by 
\begin{multline*}
\left(\int_0^T\sum_{i\in I}e^{2a_i(T-s)} \Exp\left[\left|\energy( \tilde F_i)(u^N(s))\right|^{\frac{p}{2}}\right]^\frac{2}{p}\dd s\right)^\half
\\
=\donst^{\frac{1}{2}}
\left(\sum_{i\in I}\int_0^Te^{2a_i(T-s)}\dd s \E\left[ | \energy (\tilde F_i)(\swn) |^{\frac{p}{2}}\right]^{\frac{2}{p}}\right)^{\frac{1}{2}}
\end{multline*}
the equality, coming from the fact that $u^N$ appears only evaluated at a single point in time, and its law is that of $\eta$. 
By evaluating the integral we obtain~\eqref{e:itoconvolution}.
\end{proof}

%
%
%

For any test function $\phi$ we are interested in uniform bounds of the linear and the non-linear part of~\eqref{e:AKPZ:u} tested against $\phi$, i.e. on $\const\cN^N[\eta](\varphi)$ and $\frac{\donst}{2}\eta(\Delta\varphi)$, respectively. As mentioned 
above, they will be obtained by combining the It\^o-trick, Lemma \ref{lem:ito-trick}, with an explicit solution of the Poisson equation. 

Thanks to the Fourier representation of the non-linearity given in~\eqref{e:nonlinF}, 
we can write $\cN^N[\eta](\varphi)$ as a second order Wiener-It\^o integral of the form
\begin{equ}
\cN^N[\eta](\varphi)=\sum_{\ell,m\in\Z^2_0}  \nonlin_{\ell,m}
\eta_l \eta_m \varphi_{-\ell-m}=
\sint_2\left(\sum_{\ell,m\in\Z^2_0} 
\nonlin_{\ell,m}
\varphi_{-\ell-m} e_{-\ell}\otimes e_{-m}\right)\,.
\end{equ}
Using~\eqref{e:gens:fock} it is easy to see that the solution of the Poisson
equation $\gens \pois[\eta](\varphi)= \const\cN^N[\eta](\varphi)$ is the cylinder function (clearly, depending on $N$) given by 
\begin{equation}\label{e:solPoiNL}
\pois[\eta](\varphi) \eqdef  2\const\donst^{-1}\sum_{\ell,m\in\Z^2_0}  
\frac{\nonlin_{\ell,m}}{|\ell|^2 + |m|^2} 
\eta_l \eta_m \varphi_{-\ell-m}\,.
\end{equation}
On the other hand, the solution $\poisl[\eta](\varphi)$ of $\gens\poisl[\eta](\varphi)=\frac{\donst}{2}\eta(\Delta\varphi)$ is, 
again by~\eqref{e:gens:fock}, simply $\poisl[\eta](\varphi)=\eta(\varphi)$. 
We are now ready to state and prove the following lemma. 

\begin{lemma}\emph{(Energy estimates)}\label{lem:energy-estimate}
Let $T>0$ be fixed, $\varphi\in H^1$ and $u^N$ be the solution to~\eqref{e:AKPZ:u}. Let $\cN^N$ be defined according 
to~\eqref{e:nonlin}. Then, for any $p\geq 2$ the following estimates hold
\begin{align}
\Exp \left[ \sup_{t \le T} \Big|\int _0 ^t  \const \cN^N[u^N(s)](\varphi) \dd s\Big|^{p}\right]^{\frac{1}{p}}
&\lesssim_p
T^{\half} \const \donst^{-\frac12} (\log N)^\half \|\varphi\|_{1,2}\,,\label{e:nonlin:energy-estimate}\\
\Exp \left[ \sup_{t \le T} \Big|\int _0 ^t  \frac{\donst}{2} u^N(s,\Delta\varphi) \dd s\Big|^{p}\right]^{\frac{1}{p}}
&\lesssim_p
T^{\half}  \donst^{\half} \|\varphi\|_{1,2}\label{e:lin:energy-estimate}
\end{align}
where in both cases the implicit constant does not depend on $\varphi$, $T$ nor $N$. 
\end{lemma}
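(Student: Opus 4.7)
The plan is to apply the It\^o-trick of Lemma~\ref{lem:ito-trick} to the two solutions of the Poisson equation already identified just before the statement: the cylinder function $\poisl[\eta](\varphi)=\eta(\varphi)$ solves $\gens\poisl[\eta](\varphi)=\frac{\donst}{2}\eta(\Delta\varphi)$, while $\pois[\eta](\varphi)$ given in~\eqref{e:solPoiNL} solves $\gens\pois[\eta](\varphi)=\const\cN^N[\eta](\varphi)$. In both cases, invoking~\eqref{e:ito-trick} with $F=\poisl$ respectively $F=\pois$ reduces the problem to a bound on $\E[|\energy(F)(\eta)|^{p/2}]^{1/p}$, i.e. on the $L^p(\eta)$-norm of a random variable living in a \emph{finite} sum of Wiener chaoses (chaos $0$ for $\poisl$, chaoses $0$ and $2$ for $\pois$). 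Gaussian hypercontractivity will then let us absorb the $p$-dependence and reduce to a second-moment computation.

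For the linear estimate~\eqref{e:lin:energy-estimate} this is immediate: one has $D_k\poisl=\hat\varphi_k$, so $\energy(\poisl)$ is the \emph{deterministic} quantity $\sum_k|k|^2|\hat\varphi_k|^2\lesssim\|\varphi\|_{1,2}^2$, and plugging into~\eqref{e:ito-trick} yields the claimed $\donst^{1/2}T^{1/2}\|\varphi\|_{1,2}$ bound.

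For the nonlinear estimate~\eqref{e:nonlin:energy-estimate} the Malliavin derivative of~\eqref{e:solPoiNL} can be computed explicitly using $D_k\eta_\ell=\mathds{1}_{k+\ell=0}$ and the symmetry $\nonlin_{\ell,m}=\nonlin_{m,\ell}$, giving
\begin{equation*}
D_k\pois[\eta](\varphi)=4\const\donst^{-1}\sum_{m\in\Z^2_0}\frac{\nonlin_{-k,m}}{|k|^2+|m|^2}\,\hat\varphi_{k-m}\,\eta_m.
\end{equation*}
Taking expectations and using $\E[\eta_m\overline{\eta_{m'}}]=\delta_{m,m'}$ yields
\begin{equation*}
\E[\energy(\pois)(\eta)]\lesssim \const^2\donst^{-2}\sum_{k,m\in\Z^2_0}|k|^2\,\frac{\nonlin_{-k,m}^2}{(|k|^2+|m|^2)^2}\,|\hat\varphi_{k-m}|^2,
\end{equation*}
and the bound $|\nonlin_{-k,m}|\le|m-k|\,\mathds{1}_{|k|,|m|,|m-k|\le N}$ coming from~\eqref{e:nonlinCoefficient} and $|c(-k,m)|\le|k||m|$ allows the substitution $n=k-m$ to decouple the sums. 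One obtains
\begin{equation*}
\E[\energy(\pois)(\eta)]\lesssim \const^2\donst^{-2}\sum_{n\in\Z^2_0}|n|^2|\hat\varphi_n|^2\,\Bigl(\sup_{n}\sum_{m}\frac{\mathds{1}_{|m|,|m+n|\le N}}{|m|^2+|m+n|^2}\Bigr),
\end{equation*}
and the inner Fourier sum is of order $\log N$ uniformly in $n$. Multiplying these estimates together gives $\E[\energy(\pois)]\lesssim\const^2\donst^{-2}(\log N)\|\varphi\|_{1,2}^2$; a final application of hypercontractivity to pass to the $p$-th moment and then of~\eqref{e:ito-trick} yields~\eqref{e:nonlin:energy-estimate}.

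The only real obstacle is verifying that the inner sum contributes exactly a $\log N$ factor (and no worse): this is precisely where the two-dimensional critical nature of the equation is visible, as the marginal divergence of $\sum_m 1/|m|^2$ on $\{|m|\le N\}$ is what forces the coupling-constant renormalisation~\eqref{e:ScalingRCintro}. All other steps are routine applications of the tools already developed in Sections~\ref{S:Malliavin}--\ref{S:Properties}.
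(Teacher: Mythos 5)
Your proposal is correct and follows essentially the same route as the paper: solve the Poisson equations for the linear and nonlinear parts, apply the It\^o trick of Lemma~\ref{lem:ito-trick}, reduce to the first moment of the energy via Gaussian hypercontractivity, and extract the $\log N$ factor from the Fourier sum. The only (immaterial) difference is that you bound the inner sum by the crude estimate $|c(-k,m)|\le |k||m|$ and $\sum_{0<|m|\le N}|m|^{-2}\lesssim\log N$, whereas the paper compares it with an explicit Riemann-sum/integral of a dominating kernel; both give the same $\log N$ order, which suffices here.
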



\begin{proof}
Let $\varphi$ be a test function in $H^1$ and recall the solutions $\pois(\varphi)$ and $\poisl(\varphi)$ of the Poisson equations 
defined in (and directly below of) \eqref{e:solPoiNL}, so that with the aid of Lemma~\ref{lem:ito-trick}
the proof of~\eqref{e:nonlin:energy-estimate} and~\eqref{e:lin:energy-estimate}
boils down to bounding the moments of $\energy(\pois(\varphi))(\eta)$ and $\energy(\poisl(\varphi))(\eta)$, 
with respect to the white noise measure. Since this measure is Gaussian and both $\pois(\varphi)$ and $\poisl(\varphi)$
live in a homogeneous Wiener chaos (of order $2$ and $1$ respectively), by Gaussian hypercontractivity
\cite[Theorem 3.50]{J} it suffices to bound the first moment of $\energy(\pois(\varphi))(\eta)$ and 
$\energy(\poisl(\varphi))(\eta)$. 
Let us begin with the former. In this proof, we adopt the convention that all sums over $\ell$ and $m$ are truncated by 
$\indN{\ell,m,\ell+m}$. Notice that 
\begin{equ}
\energy(\pois(\varphi))(\eta)=2\const^2\donst^{-2}\sum_{\ell\in\Z^2_0}|\ell|^2\Big|\sum_{m\in\Z^2_0} 
\nonlintilde_{\ell,m} \eta_m \varphi_{-\ell-m}\Big|^2
\end{equ}
where we set $ \nonlintilde_{\ell,m}\eqdef(|\ell|^2 + |m|^2)^{-1} \nonlin_{\ell,m} $. Upon taking expectation we get
\begin{equation}\label{e:QV}
\begin{split}
\E| \energy(\pois(\varphi))(\eta) |&=2\const^2\donst^{-2}\sum_{\ell,m\in\Z^2_0}|\ell|^2 |\nonlintilde_{l,m}|^2|\varphi_{-\ell-m}|^2\\
&=2\const^2\donst^{-2}\sum_{k\in\Z^2_0} |k|^2 \Big(\sum_{\ell\in\Z^2_0} \frac{|\ell|^2}{|k|^2}|\nonlintilde_{\ell,k-\ell}|^2\Big) 
|\varphi_k|^2
\end{split}
\end{equation}
where the first equality is a consequence of the fact that $\{\eta_m\}_m$ is 
a family of standard complex valued Gaussian random variables such that $\E[\eta_m\eta_\ell]=\mathds{1}_{\ell+m=0}$ for $l,m \ne 0$. In order to 
bound the quantity in the parenthesis, recall~\eqref{e:nonlinCoefficient} and set 
\begin{equation}\label{e:RSapprox}
f_k(z)\eqdef \frac{\thec(z, \tilde k-z)^2 }{ (|z|^2 + |\tilde k-z|^2)^2|\tilde k-z|^2 }
\,,\quad
\tilde k\eqdef \frac{k}{|k|},
\end{equation}
for $z \in \R^2$ and $k \in \Z^2_0$, and notice that $|f_k(z)| \lesssim g(z)\eqdef\frac{1}{|z|^2}\mathds{1}_{|z|>1} + 1 $ uniformly in $k \in \Z^2_0$ and $z \in \R^2$. 
Now, plugging in the definition of $\nonlin_{\ell,k-\ell}$, we have 
\begin{equ}
\sum_{\ell\in\Z^2_0} \frac{|\ell|^2}{|k|^2}|\nonlintilde_{\ell,k-\ell}|^2=\sum_{\ell\in\Z^2_0}\frac{1}{|k|^2} f_k\left(\frac{\ell}{|k|}\right)
\lesssim
\int_{|z| \le N} g(z) \dd z
\lesssim
\log N\,.
\end{equ}
Therefore we conclude that 
\begin{align}\label{e:energy-estimate}
\E\left[| \energy(\pois(\varphi))(\eta) |^p\right]^{\frac{1}{p}}\lesssim \const^{2} \donst^{-2} \log N \|\varphi\|^2_{1,2}
\end{align}
from which~\eqref{e:nonlin:energy-estimate} follows.  

The proof of~\eqref{e:lin:energy-estimate} is straightfoward since in this case the quadratic variation of 
$M_\cdot(\poisl(\varphi))$ is deterministic and we have $\energy(\poisl(\varphi))(\eta)=\|\varphi\|^2_{1,2}$. 
\end{proof}

\begin{remark}
At first sight, estimate~\eqref{e:lin:energy-estimate} might come as a surprise. Indeed, if we take $\donst\equiv 1$ it shows 
that {\it no matter how $\const$ behaves as $N\ua\infty$}, the bound would provide tightness for the sequence of 
approximations $\{u^N\}_N$ in a suitable space of {\it space-time distributions} (replace $\Delta\varphi$ with any $\psi$ smooth 
in~\eqref{e:lin:energy-estimate}). To understand this behaviour, consider, as an example, the family of SDEs
\begin{equ}
\dd X^N_t= -X^N_t + C_N
\left(\begin{matrix}
0 &1\\
-1 &0
\end{matrix}\right)
X^N_t \dd t+\dd B_t
\end{equ}
where $B$ is a two dimensional Brownian motion. 
Thanks to the It\^o trick, it is easy to see that the time average of $X^N$ 
stays uniformly bounded, {\it independently of the value of $C_N$}, thus giving tightness of $X^N$
in a space of distributions. That said, the time integral of $X^N$ represents a poor description of its actual behaviour
since, in case $C_N$ goes to $\infty$, $X^N$ is oscillating increasingly fast and the time integral simply converges to its average. 
%
%
\end{remark}

Lemma~\ref{lem:energy-estimate} suggests that, in order to control the non-linearity
in~\eqref{e:AKPZ:u} {\it uniformly} in $N$, we need to tune $\const$ and $\donst$ in such a way that the logarithmic factor 
on the right hand side of~\eqref{e:nonlin:energy-estimate} disappears. Let us define the integral in time of $\const\cN^N[u^N]$, as 
\begin{equation}\label{e:IntNonlin}
\inonlin_t[u^N](\varphi)\eqdef \int_0^t\const\cN^N[u^N(s)](\varphi)\dd s
\end{equation}
for any test function $\varphi\in H^1$. In the following theorem
we show that, under this scaling, the couple $\{(u^N, \inonlin[u^N])\}_N$ admits subsequential limits, 
in a (product) space of continuous functions in time with values in a space of distributions of suitable regularity.  

\begin{theorem}\label{thm:tightness}
Let $T>0$ and, for $N\in\N$, let $u^N$ be the stationary solution of~\eqref{e:AKPZ:u} and 
$\inonlin$ be the functional defined in~\eqref{e:IntNonlin}. Let $C>0$ and assume that $\const$ and 
$\donst$ satisfy 
\begin{equation}\label{e:ScalingRC}
\const\donst^{-\half}\sim \sqrt{\frac{ C}{\log N}}\,,\qquad \text{as $N$ tends to $\infty$.}
\end{equation}
Then, the sequence $\{(u^N,\inonlin[u^N])\}_N$ is tight in $C_T^\gamma\CC^\alpha\times C_T^\gamma\CC^\alpha$ 
for any $\gamma<1/2$ and $\alpha<-2$. 

Moreover, if $\donst=1$ for all $N\in\N$, then the sequence $\{(u^N,\inonlin[u^N])\}_N$ 
is tight in $C_T\CC^\alpha\times C_T\CC^\beta$ for any $\alpha<-1$ and $\beta<-2$.
\end{theorem}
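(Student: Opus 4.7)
The strategy is a Kolmogorov-continuity-plus-Besov-embedding argument fed by the energy estimates of Lemma \ref{lem:energy-estimate} and the stationarity of $u^N$. I treat $u^N$ and $\inonlin[u^N]$ in parallel, since tightness in the product space follows from tightness of each marginal. For fixed-time bounds on $u^N$, Lemma \ref{lem:invariant} yields that $u^N(t)$ is distributed as the spatial white noise $\eta$ for every $t$; a direct Fourier computation (combined with Gaussian hypercontractivity for higher moments) then gives $\sup_{N,t\le T}\E\|u^N(t)\|_{B^\alpha_{p,p}}^p<\infty$ for any $\alpha<-1$ and $p\ge 2$. For $\inonlin_t[u^N]$ the analogous bound holds only at $\alpha<-2$: applying \eqref{e:nonlin:energy-estimate} to $\varphi=e_k$ and using the scaling \eqref{e:ScalingRC} gives $\Exp[|\inonlin_t(e_k)|^p]^{1/p}\lesssim t^{1/2}|k|$, and the corresponding Besov-norm computation is finite precisely when $\alpha<-2$.

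For the time-increment bound on $u^N$, I test \eqref{e:AKPZ:u} against $e_k$ to obtain
\begin{equation*}
u^N_k(t)-u^N_k(s)=-\tfrac{\donst}{2}|k|^2\int_s^t u^N_k(r)\,dr+\const\int_s^t\cN^N_k[u^N(r)]\,dr+\donst^{1/2}|k|(B_k(t)-B_k(s)).
\end{equation*}
The three summands are controlled uniformly in $N$ by \eqref{e:lin:energy-estimate}, \eqref{e:nonlin:energy-estimate} (together with \eqref{e:ScalingRC}), and Burkholder-Davis-Gundy, respectively, each contributing $|t-s|^{1/2}|k|$ at the $L^2$ level; Gaussian hypercontractivity (each summand lies in a fixed Wiener chaos) upgrades this to $\Exp[|u^N_k(t)-u^N_k(s)|^p]^{1/p}\lesssim |t-s|^{1/2}|k|$ for any $p\ge 2$. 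Translation invariance together with a pointwise application of hypercontractivity gives $\Exp\|\Delta_j f\|_{L^p}^p\lesssim(\sum_{|k|\sim 2^j}\Exp|f_k|^2)^{p/2}$ for the processes at hand, yielding
\begin{equation*}
\Exp\|u^N(t)-u^N(s)\|_{B^\alpha_{p,p}}^p\lesssim |t-s|^{p/2}\sum_{j}2^{(\alpha+2)jp},
\end{equation*}
which converges for $\alpha<-2$. Combined with Kolmogorov's criterion and the embedding $B^\alpha_{p,p}\hookrightarrow\CC^{\alpha-2/p}$ (with $p$ arbitrarily large), this gives tightness of $u^N$ in $C_T^\gamma\CC^{\alpha'}$ for any $\gamma<1/2$ and $\alpha'<-2$. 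The identical scheme, using \eqref{e:nonlin:energy-estimate} for both the fixed-time and the increment bounds, gives tightness of $\inonlin[u^N]$ in the same space.

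For the second statement ($\donst=1$), the improvement to $\alpha'<-1$ comes from interpolating the Fourier-mode increment bound $|t-s||k|^2$ with the stationarity a priori bound $\Exp|u^N_k(t)-u^N_k(s)|^2\le 4$, producing $\Exp|u^N_k(t)-u^N_k(s)|^2\lesssim |t-s|^\theta|k|^{2\theta}$ for any $\theta\in(0,1)$. The Besov summation now converges under the weaker condition $\alpha+1+\theta<0$, so any $\alpha'<-1$ is reachable by taking $\theta$ small and $p$ large, at the cost of the Hölder exponent in time degrading as $\alpha'\to -1$, which is why only continuity (not Hölder regularity) is asserted in this regime. No analogous improvement applies to $\inonlin[u^N]$, which lacks a stationary $L^2$ bound, so tightness for the second component remains at $\beta<-2$. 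The subtle point, and the defining feature of the critical regime, is that the $\sqrt{\log N}$ in \eqref{e:nonlin:energy-estimate} is \emph{exactly} absorbed by the scaling \eqref{e:ScalingRC}; any slower decay of $\const\donst^{-1/2}$ would let the nonlinearity contributions diverge and destroy tightness.
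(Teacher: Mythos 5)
Your overall architecture — reduction to increments from time $0$ via stationarity and the Markov property, the energy estimates of Lemma~\ref{lem:energy-estimate}, Besov embedding and Kolmogorov's criterion — is the same as the paper's. Where you genuinely deviate is the time-increment bound for $u^N$: the paper works with the mild formulation, treats the heat-convolved nonlinearity through the convolution form \eqref{e:itoconvolution} of the It\^o trick, and trades time for space regularity via $1-e^{-\donst|k|^2t}\lesssim(\donst|k|^2t)^\kappa$; you instead test the weak form of \eqref{e:AKPZ:u} mode by mode and, for $\donst=1$, interpolate the increment bound $|t-s|\,|k|^2$ against the stationary a priori bound. This is a legitimate variant and reproduces exactly the paper's exponent counting ($\alpha<-1-\theta-2/p$ versus $\alpha<-1-\kappa-d/p$), including the loss of H\"older regularity in time as $\alpha\to-1$.

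The genuine gap is the step that converts per-mode bounds into block bounds: the claim $\Exp\|\Delta_j f\|_{L^p}^p\lesssim\bigl(\sum_{|k|\sim2^j}\Exp|f_k|^2\bigr)^{p/2}$ ``by translation invariance and pointwise hypercontractivity'', and the related assertion that each summand of the mode increment ``lies in a fixed Wiener chaos''. The random variables $\int_s^t u^N_k(r)\,\dd r$, $\int_s^t\cN^N_k[u^N(r)]\,\dd r$, hence $u^N_k(t)-u^N_k(s)$ and $\inonlin_t[u^N](e_k)$, are path functionals of a nonlinear dynamics: only the fixed-time marginal is Gaussian, and these time integrals do not belong to any fixed Wiener chaos of $(\eta,\xi)$, so hypercontractivity cannot be invoked for them. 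Replacing that step by the triangle inequality over the $\sim2^{2j}$ modes in a block loses a factor $2^j$ and would only yield $\alpha<-3$ (resp.\ $\alpha<-2-\theta$), ruining both conclusions. The repair is precisely the paper's device: apply \eqref{e:lin:energy-estimate} and \eqref{e:nonlin:energy-estimate} directly with $\varphi=K_j(x-\cdot)$, using $\|K_j\|_{1,2}\sim2^{2j}$ — these lemmas already give all moments $p\ge2$, because hypercontractivity enters only through the energy functional $\energy(\pois(\varphi))(\eta)$ evaluated under the invariant measure — treat the stochastic term as a genuinely Gaussian object, and get the stationary block bound $\Exp\|\Delta_j(u^N(t)-u^N(s))\|_{L^p}^p\lesssim2^{jp}$ from Gaussianity of $\eta$; your interpolation can then be carried out at the level of these block estimates and the rest of your argument goes through. (Your closing remark that slower decay of $\const\donst^{-1/2}$ would ``destroy tightness'' is also not established by the failure of an upper bound, but it plays no role in the proof.)
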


\begin{remark}
It is not surprising that in case $\donst$ goes to $0$, we can prove tightness only in the same space where the space
time white noise lives. Indeed, although in this scenario the noise disappears in the limit, 
we also lose the smoothing effect of the Laplacian so that we cannot expect any regularisation coming from it. 
Since we are starting from a space white noise $\eta$ whose regularity is $-1-\eps$ 
for any $\eps>0$, then heuristically, power-counting suggests that the regularity of the nonlinearity 
(and consequently of the limit of $u^N$) should be $-2-2\eps$. 
\end{remark}

\begin{proof}
Choose sequences of coupling constants $\const$ and $\donst$ such that \eqref{e:ScalingRC} holds, 
let $u^N$ be the stationary solution of~\eqref{e:AKPZ:u}
and let $\inonlin$ be given by~\eqref{e:IntNonlin}. 
A natural way to establish tightness for a sequence of random processes is
Kolmogorov's criterion which, in the present context, requires a uniform control over the moments of the 
$\CC^\alpha\times\CC^\beta$-norm of the time increments of $(u^N,\inonlin[u^N])$. 
Thanks to the Markov property and the fact that $u^N$ is stationary for any $N\in\N$ we have
\begin{equs}
\Exp\big[\|u^N(t)-&u^N(r)\|_\alpha^p\big]=\Exp\left[\Exp\left[\|u^N(t)-u^N(r)\|_\alpha^p|\filt_r\right]\right]\\
&=\Exp\left[\Exp^{u^N(r)}\left[\|u^N(t-r)-u^N(0)\|_\alpha^p\right]\right]=\Exp\left[\|u^N(t-r)-u^N(0)\|_\alpha^p\right]
\end{equs} 
where $\{\filt_r\}_r$ is the filtration generated by $u^N$ and the previous holds for all $0\leq r<t\leq T$. An 
analogous computation can be carried out for $\inonlin[u^N]$ so that, for both, we can 
simply focus on the case $r=0$. 

Now, in order to obtain uniform bounds on $(u^N(t)-u^N(0), \inonlin_t[u^N])$ (clearly, $\inonlin_0[u^N]=0$) 
in a Besov space we need to understand 
the behaviour of their Littlewood-Paley blocks. For $\inonlin[u^N]$ we can immediately exploit Lemma~\ref{lem:energy-estimate}
and in particular~\eqref{e:nonlin:energy-estimate}. Indeed, it suffices to choose $\varphi$ to be 
$j$-th Littlewood-Paley kernel ($j\geq -1$)
so that 
\begin{equs}
\Exp\left[\|\Delta_j\inonlin_t[u^N]\|_{L^p(\T^2)}^p\right]&=\int_{\T^2} \E\left[\left|\inonlin_t[u^N](K_j(x-\cdot))\right|^p\right]\dd x\lesssim t^\frac{p}{2}\const^p\donst^{-\frac{p}{2}}(\log N)^{\frac{p}{2}}2^{2jp}
\end{equs}
where $K_j$ was defined above \eqref{def:Besov} and we used that $\|K_j\|^2_{1,2}\sim\sum_{|k|\sim 2^j} |k|^2\sim 2^{4j}$. Hence, by Besov embedding~\eqref{e:BesovEmb}
we have
\begin{equs}
\Exp\left[\|\inonlin_t[u^N]\|_{\alpha}^p\right]&\lesssim \Exp\left[\|\inonlin_t[u^N]\|_{B^{\alpha+d/p}_{p,p}}^p\right]=\sum_{j\geq -1} 2^{(\alpha+d/p)jp}\Exp\left[\|\Delta_j\inonlin_t[u^N]\|_{L^p(\T^2)}^p \right]\\
&\lesssim t^\frac{p}{2}\const^p\donst^{-\frac{p}{2}}(\log N)^{\frac{p}{2}} \sum_{j\geq -1} 2^{(\alpha+d/p+2)jp}
\end{equs}
and the latter sum converges if and only if $\alpha<-2-d/p$. Since the previous bound holds for any $p\geq 2$, by choosing
the renormalisation constants $\const$ and $\donst$ according to~\eqref{e:ScalingRC}, Kolmogorov implies that 
$\{\inonlin[u^N]\}_N$ is tight in $C_T^\gamma\CC^\alpha$ for any $\alpha<-2$ and $\gamma<1/2$. 
\newline

We now focus on $u^N$. By writing~\eqref{e:AKPZ:u} in its mild formulation and 
convolve both sides of the resulting expression with the $j$-th Littlewood-Paley kernel ($j\geq -1$), we obtain 
\begin{equation}\label{e:mildLPB}
\Delta_j(u^N(t)-\eta)=\Delta_j(P^N \eta(t)-\eta)+\const \Delta_jP^N \cN^N[u^N](t)+\donst^\half(-\Delta)^\half \Delta_jP^N\xi(t)
\end{equation}
where $P^N$ is the fundamental solution of $(\partial_t-\frac{\donst}{2}\Delta)P^N=0$ and, for any space-time distribution $f$,
$P^Nf$ denotes the space-time convolution between $P^N$ and $f$. 
At first we want to determine
bounds on the $p$-th moment of the $L^p$ norm of the three summands on the right hand side. 
For the first, using Gaussian hypercontractivity of $\eta$, we have
\begin{equs}
\E\Big[\|\Delta_j(P^N \eta(t)-&\eta)\|_{L^p(\T^2)}^p\Big]=\int_{\T^2} \E\left[\left|\Delta_j(P^N \eta(t)-\eta)\right|^p\right]\dd x\\
&\lesssim \int_{\T^2} \E\left[\left|\Delta_j(P^N \eta(t)-\eta)\right|^2\right]^\frac{p}{2}\dd x\\
&=\frac{1}{4\pi^2}\Big(\sum_{k\in\Z^2_0} \varrho_j(k)\left(e^{-\frac{\donst}{2}|k|^2t}-1\right)^2\Big)^{\frac{p}{2}}\lesssim t^{\frac{\kappa}{2} p} \donst^{\frac{\kappa}{2} p} 2^{jp(1+\kappa)}
\end{equs}
where the last bound is a consequence of the fact that $\varrho_j$ is supported on those $k\in\Z^2$ such that $|k|\sim 2^j$
and the geometric interpolation inequality, i.e. $1-e^{-\donst|k|^2t}\leq \min\{1,\donst|k|^2t\}\lesssim (\donst|k|^2t)^\kappa$, valid 
for any $\kappa\in[0,1]$ (applied above for $\tilde\kappa\eqdef\kappa/2$, $\kappa\in[0,2]$). 

To treat the second summand in~\eqref{e:mildLPB}, we want to rewrite it in such a way that Lemma~\ref{lem:ito-trick} is applicable. 
This is indeed possible since 
\begin{equ}
\const \Delta_jP^N \cN^N[u^N](t,x)=\int_0^t \gens \tilde H^N_{(t,x)}(s,u^N(s))\dd s
\end{equ}
where $\tilde H^N_{(t,x)}$ is the cylinder function depending smoothly on time defined by 
\begin{equ}
\tilde H^N_{(t,x)}(s,\eta)\eqdef \sum_{k\in\Z^2_0}e^{-\frac{\donst}{2}|k|^2(t-s)} \pois_k(\eta)\varrho_j(k) e_k(x)
\end{equ} 
and $\pois_k$ is the $k$-th Fourier component of the solution of the Poisson equation given in~\eqref{e:solPoiNL}.  
Hence, the $L^p$ norm of the Littlewood-Paley block is controlled by
\begin{equs}
\Exp\Big[\|\const \Delta_jP^N \cN^N&[u^N](t,\cdot)\|_{L^p(\T^2)}^p\Big]=\int_{\T^2} \Exp\Big[\Big|\int_0^t \gens \tilde H^N_{(t,x)}(s,u^N(s))\dd s\Big|^p\Big]\dd x\\
&\lesssim \donst^{\frac{p}{2}}
\left(\sum_{k\in \Z^2_0}\varrho_j(k)\Big(\frac{1-e^{-\donst|k|^2t}}{\donst|k|^2}\Big) \E\left[ | \energy ( \pois_k)(\swn) |^{\frac{p}{2}}\right]^{\frac{2}{p}}\right)^{\frac{p}{2}}\\
&\lesssim \donst^{\frac{p}{2}}
\left(\sum_{|k|\sim 2^j}\Big(\frac{1-e^{-\donst|k|^2t}}{\donst|k|^2}\Big) \const^{2} \donst^{-2} \log N |k|^2\right)^{\frac{p}{2}}\\
&\lesssim t^{\frac{\kappa}{2}p}\left(\const\donst^{-1+\frac{\kappa}{2}} (\log N)^\half\right)^p\, 2^{jp(1+\kappa)}
\end{equs}
where we went from the first to the second line via~\eqref{e:itoconvolution}, we subsequently bounded the $p/2$-moment 
of the energy through~\eqref{e:energy-estimate} and in the last line we used the same interpolation inequality as above. 

For the last term in~\eqref{e:mildLPB}, we can apply once more Gaussian hypercontractivity, but this time for the 
space time white noise $\xi$, to get 
\begin{equs}
\Exp\Big[\|&\donst^\half \Delta_jP^N \xi(t,\cdot)\|_{L^p(\T^2)}^p\Big]=
\int_{\T^2} \Exp\Big[\left|\donst^\half \Delta_jP^N \xi(t,x)\right|^p\Big]\dd x\\
&\lesssim \int_{\T^2} \Exp\Big[\left|\donst^\half \Delta_jP^N \xi(t,x)\right|^2\Big]^\frac{p}{2}\dd x=\left(\sum_{|k|\sim2^j} (1-e^{-\donst|k|^2t})\right)^{\frac{p}{2}}\lesssim t^{\frac{\kappa}{2}p} \donst^{\frac{\kappa}{2}p} 2^{jp(1+\kappa)}
\end{equs}
where in the last passage is again a consequence of the interpolation inequality.
\newline

Putting these three bounds together and applying Besov embedding~\eqref{e:BesovEmb}, we see that, for any $t> 0$ and 
$p\geq 2$, we have
\begin{equs}
\Exp\left[\|u^N(t)-u^N(0)\|_\alpha^p\right]&\lesssim \Exp\left[\|u^N(t)-u^N(0)\|_{B^{\alpha+d/2}_{p,p}}^p\right]\\
&=\sum_{j\geq -1}2^{(\alpha+d/p)j p}\Exp\left[\|\Delta_j(u^N(t)-u^N(0))\|_{L^p}^p\right]\\
&= 
t^{\frac{\kappa}{2}p} \donst^{\frac{\kappa}{2}p} \left(2+\left(\const\donst^{-1} (\log N)^\half\right)^p\right)
\sum_{j\geq -1}2^{j p (\alpha+d/p+1+\kappa)}\,.
\end{equs}
Now, notice that the last sum converges as soon as $\alpha<-1-\kappa-d/p$. Hence, 
if $\donst$ is a constant independent of $N$ and $\const\sim (\log N)^{-\half}$, 
we can conclude, by Kolmogorov's criterion, that the sequence $\{u^N\}_N$ is tight 
in the space $C_T\CC^\alpha$, with $\alpha$ arbitrarily close to (but strictly smaller than) $-1$. 
Otherwise, we are forced to choose $\kappa=1$ and the sequence 
$\{u^N\}_N$ is tight in $C_T^\gamma\CC^\alpha$ for all $\gamma<1/2$ and $\alpha<-2$. 
 \end{proof}
 
\begin{remark}
The previous theorem guarantees that if $\const$ and $\donst$ 
satisfy~\eqref{e:ScalingRC} then the couple $(u^N, \inonlin)$ converges (at least along a subsequence) to some limit $(u,\cB)$. In case that $\donst \to 0$ the energy estimate \eqref{e:lin:energy-estimate} of Lemma~\ref{lem:energy-estimate} implies that for any test function $\varphi\in H^1$ one has
\begin{equ}
u_t(\varphi)-u_0(\varphi)=\cB_t[u](\varphi)\,.
\end{equ}
Hence, a characterisation of the limit $u$ is connected to a deeper understanding of the process $\cB$. 
We are currently neither able to show that $\cB$ is $0$ nor are we able to define its law, so we leave its study to future 
investigations. 
\end{remark}
 
We define the integral in time of the nonlinearity for $h^N$, the solution of the approximation of the anisotropic KPZ equation 
in~\eqref{e:kpz:reg}, as
\begin{equation}\label{e:IntNonlinAKPZ}
\inonlintilde_t[h^N](\varphi)\eqdef \int_0^t\const\tilde\cN^N[h^N(s)](\varphi)\dd s\,
\end{equation}
where $\varphi$ is a generic test function and 
\begin{equation}\label{e:nonlin:KPZ}
\tilde \cN^N[h^N]\eqdef \Pi_N \Big((\Pi_N \partial_1 h^N)^2 - (\Pi_N \partial_2 h^N)^2\Big)\,,
\end{equation}
we can prove tightness for the sequence $\{(h^N,\inonlintilde[h^N])\}_N$.  
%
%
 
\begin{theorem}\label{thm:tightnessAKPZ}
Let $T>0$ and, for $N\in\N$, let $h^N$ be the solution of~\eqref{e:kpz:reg} started at $0$ from $\tilde\eta$, where for all 
$k\in\Z^2_0$, $\tilde\eta_k\eqdef |k|^{-1}\eta_k$, $\eta$ a space white noise, and $\tilde\eta_0=0$, and 
$\inonlintilde[h^N]$ be defined according to~\eqref{e:IntNonlinAKPZ}. 
Let $C>0$ and assume $\const$ and $\donst$ satisfy~\eqref{e:ScalingRC}. 

Then, the sequence $\{(h^N,\inonlintilde[h^N])\}_N$ is tight in $C_T^\gamma\CC^{\alpha+1}\times C_T^\gamma\CC^{\alpha+1}$ 
for any $\gamma<1/2$ and $\alpha<-2$. 
Moreover, if~\eqref{e:ScalingRC} is satisfied with $\donst$ a constant that is independent of $N$, 
then the sequence $\{(h^N,\inonlintilde[h^N])\}_N$ 
is tight in $C_T\CC^{\alpha+1}\times C_T\CC^{\beta+1}$ for any $\alpha<-1$ and $\beta<-2$.
\end{theorem}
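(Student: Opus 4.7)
The strategy is to reduce to Theorem~\ref{thm:tightness} on the zero-mean subspace and to handle the zero Fourier mode separately via an It\^o-trick argument. First I would observe that, since $(-\Delta)^{1/2}\tilde\eta = \eta$ on $\Z^2_0$, the process $u^N = (-\Delta)^{1/2} h^N$ (restricted to nonzero Fourier modes) is precisely the stationary solution of~\eqref{e:AKPZ:u} starting from the spatial white noise $\eta$. Moreover, comparing~\eqref{e:nonlin} and~\eqref{e:nonlin:KPZ} gives $\cN^N[u^N] = (-\Delta)^{1/2}\tilde\cN^N[h^N]$ on nonzero modes, so $\inonlintilde[h^N] = (-\Delta)^{-1/2}\inonlin[u^N]$ restricted to that subspace. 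Since $(-\Delta)^{-1/2}:\CC^\alpha_0 \to \CC^{\alpha+1}_0$ is a continuous linear bijection (as noted before~\eqref{e:AKPZ:u}), tightness transfers, and Theorem~\ref{thm:tightness} directly yields tightness of the mean-zero projections of $(h^N,\inonlintilde[h^N])$ in the claimed spaces, with regularity index shifted up by one in both factors.

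It then remains to treat the zero Fourier mode. Evaluating~\eqref{e:kpz:reg} at $k=0$ kills both the Laplacian and the noise contributions, so $\partial_t h^N_0(t) = \lambda_N (\tilde\cN^N[h^N(t)])_0$ and, using $\tilde\eta_0 = 0$, this integrates to $h^N_0(t) = (\inonlintilde_t[h^N])_0$. Hence tightness for the zero mode of \emph{both} $h^N$ and $\inonlintilde[h^N]$ reduces to uniform moment bounds for the scalar process $t \mapsto (\inonlintilde_t[h^N])_0$. A Parseval computation using $h^N_k = u^N_k/|k|$ for $k\neq 0$ yields
\[
(\tilde\cN^N[h^N(s)])_0 = \frac{1}{2\pi}\sum_{0<|k|_\infty\le N}\frac{k_1^2 - k_2^2}{|k|^2}\,|u^N_k(s)|^2,
\]
and the symmetry $k \leftrightarrow (k_2,k_1)$ kills its stationary expectation, so the (already centered) quantity lives in the second Wiener chaos of $\eta$.

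To obtain uniform moment bounds, I would solve the Poisson equation $\gens F^N = \lambda_N (\tilde\cN^N)_0$ (viewed as an identity between functionals of $\eta$) within $\wc^2$ using~\eqref{e:gens:fock}. Since the operator acts diagonally in Fock space, the solution is the explicit quadratic functional
\[
F^N(\eta) = \frac{\lambda_N}{\pi\nu_N}\sum_{0<|k|_\infty\le N}\frac{k_1^2 - k_2^2}{|k|^4}(|\eta_k|^2 - 1).
\]
A direct computation paralleling the one in Lemma~\ref{lem:energy-estimate} then gives $\E[\energy(F^N)(\eta)] \lesssim \lambda_N^2\nu_N^{-2}\log N$, since $(k_1^2-k_2^2)^2/|k|^6 \lesssim |k|^{-2}$ and summing over $|k|_\infty\le N$ produces one logarithm. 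Inserting this into Lemma~\ref{lem:ito-trick} and using Gaussian hypercontractivity yields $\Exp[\sup_{t\le T}|h^N_0(t)|^p]^{1/p} \lesssim T^{1/2}\lambda_N\nu_N^{-1/2}(\log N)^{1/2}$, which is uniform in $N$ under~\eqref{e:ScalingRC}. H\"older-in-time increments follow by the same interpolation inequality $1-e^{-\nu_N|k|^2(t-s)} \le (\nu_N|k|^2(t-s))^\kappa$ used in the proof of Theorem~\ref{thm:tightness}, and Kolmogorov's criterion then gives tightness of $h^N_0$ in $C_T^\gamma$ for $\gamma<1/2$ (and simply in $C_T$ when $\nu_N\equiv 1$), completing the proof in combination with the first step.

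The main technical obstacle is the zero-mode analysis: while the nonzero modes are essentially free from Theorem~\ref{thm:tightness} via continuity of the fractional Laplacian, the zero mode requires a fresh Poisson-equation computation, and some care is needed to extract precisely the $\log N$ factor matching the scaling~\eqref{e:ScalingRC}. Fortunately only the second Wiener chaos contributes, so Lemma~\ref{lem:generator:fockspace} makes the inversion of $\gens$ fully explicit and the energy estimate mirrors that of Lemma~\ref{lem:energy-estimate}.
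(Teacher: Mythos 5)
Your proposal is correct and follows essentially the same route as the paper: transfer the mean-zero part from Theorem~\ref{thm:tightness} via the homeomorphism $(-\Delta)^{-1/2}:\CC^\alpha_0\to\CC^{\alpha+1}_0$, observe that $h^N_0(t)=\inonlintilde_t[h^N](e_0)$ since $\tilde\eta_0=0$ and the Laplacian and noise have no zero mode, and then bound the zero mode by solving the Poisson equation in the second chaos and applying the It\^o trick, with the resulting $\log N$ absorbed by~\eqref{e:ScalingRC}. Two harmless slips: your explicit $F^N$ is off by a constant factor (the solution is $-\lambda_N/(2\pi\nu_N)\sum_k (k_1^2-k_2^2)|k|^{-4}(|\eta_k|^2-1)$, which changes nothing in the estimate), and the heat-kernel interpolation inequality is not needed for the zero mode, since the $(t-r)^{1/2}$ increment bound already follows from the It\^o trick on $[r,t]$ together with stationarity.
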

\begin{proof}
For $\alpha\in\R$, define $\CC^\alpha_0$ as the set of functions in $\CC^\alpha$ whose $0$-th Fourier mode is $0$. Then, 
$\Delta^{1/2}$ is a homeomorphism between $\CC^\alpha_0$ and $\CC^{\alpha-1}_0$. Since by definition
$(u^N,\inonlin[u^N])=(\Delta^{1/2} h^N, \Delta^{1/2}\tilde\inonlin[h^N])$ and by Theorem~\ref{thm:tightness} $(u^N,\inonlin[u^N])$
is tight in $C_T^\gamma\CC^\alpha\times C_T^\gamma\CC^\alpha$ for any $\gamma<1/2$ and $\alpha<-2$ (resp. 
$C_T\CC^\alpha\times C_T\CC^\beta$ for any $\alpha<-1$ and $\beta<-2$, if $\donst$ constant) 
then the sequence $(h^N-h^N(e_0), \tilde\inonlin[h^N]-\tilde\inonlin[h^N](e_0))$ is tight in 
$C_T^\gamma\CC^{\alpha+1}\times C_T^\gamma\CC^{\alpha+1}$ (resp. $C_T\CC^{\alpha+1}\times C_T\CC^{\beta+1}$).
Therefore, it suffices to focus on $(h^N(e_0),\tilde\inonlin[h^N](e_0))$. Notice also that, since we chose $\tilde\eta_0=0$, 
$h^N_t(e_0)=\inonlintilde_t[h^N](e_0)$. 

In order to show tightness for the $0$-th Fourier mode of $h^N$ we want to apply again Lemma~\ref{lem:ito-trick}. To do so, 
we need to solve the Poisson equation $\gens \tilde H^N_0=\tilde\cN^N_0$. Proceeding as in~\eqref{e:solPoiNL}, we get
\begin{equation}\label{e:PoisAKPZ}
\tilde H^N_0[\eta]=2\const\donst^{-1}\sum_{\substack{\ell,m\in\Z^2_0\\\ell+m=0}}\frac{c(\ell,m)}{|\ell||m|}\eta_\ell\eta_m 
\end{equation}
and 
\begin{equs}
\E\left[\energy(\tilde H^N_0)(\eta) \right]= \sum_{k\in\Z^2_0}|k|^2 \E \left[|D_k\tilde H^N_0[\eta]|^2\right]
=4\const^2\donst^{-2}\sum_{|k|\leq N} \frac{c(k,-k)^2}{|k|^6}
\end{equs}
which, by~\eqref{e:ito-trick}, implies
\begin{equ}
\Exp[|h_t^N(e_0)|^p]^{\frac{1}{p}}=\Exp[|\tilde\inonlin[h^N](e_0)|^p]^{\frac{1}{p}}\lesssim t^{\frac{1}{2}}\const^2\donst^{-1}\log N
\end{equ}
and tightness follows. 
\end{proof}

\begin{remark}\label{rmk:NoRenom}
As opposed to the isotropic KPZ equation treated in~\cite{CSZ},~\cite{Gu} and \cite{CD19}, in the present context there is 
no average growth that needs to be subtracted in order to guarantee the convergence of the approximation. 
This is due do the fact that the nonlinearity in~\eqref{e:kpz:reg} has a further (anti-)symmetry with respect to change of variables
$\R^2\ni(x_1,x_2)\mapsto (x_2,x_1)$. 
\end{remark} 
%
%

\section{Lower bounds and non-triviality}
\label{S:nontrivialty}

Throughout this section, we will be assuming that for every $N\in\N$, $\donst=1$, so that the only renormalisation 
constant that we allow to vanish is $\const$. Notice that in this case the symmetric part of the generator $\gen$, $\gens$, 
does not depend on $N$ so we will simply denote it by $\gensy$

We aim at obtaining lower bounds on functionals of the solution $u^N$
to~\eqref{e:AKPZ:u} and to show that any subsequential limit $u$ is not trivial. By ``trivial'' here we mean that $u$ is  
the solution of the original equation without the nonlinearity, a scenario that could materialise in case 
$\const$ converges to $0$ too fast. 

To do so, we apply a technique, coming from particle systems (see~\cite{Landim2004}), which consists in determining
(and bounding) a variational formula for the Laplace transform of the integral in time of a suitable
functional of our process. We begin with the following lemma. 

\begin{lemma}\label{lem:Laplace}
Let $\{u^N(t)\}_{t\geq 0}$ be the stationary solution to~\eqref{e:AKPZ:u}
and $F\in L^2(\eta)$. 
Then, for every $\lambda>0$ the following equality holds
\begin{equation}\label{e:Laplace}
\int_0^\infty \dd t \,
	e^{-\lambda t}\Exp\left[ \left(\int_0^tF(u^N(s))\dd s\right)^2 \right] = 
\frac{2}{\lambda^2} \E\left[ F(\eta) (\lambda- \gen)^{-1} F(\eta)\right].
\end{equation}
\end{lemma}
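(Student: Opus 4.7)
The plan is to identify the left-hand side as the Laplace transform of a double time integral which, after using the Markov property and stationarity, collapses to an integral against the semigroup $P_t^N = e^{t\gen}$ of $u^N$; the resolvent formula $\int_0^\infty e^{-\lambda t} P_t^N \, dt = (\lambda-\gen)^{-1}$ then gives the claimed identity.

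More concretely, I would proceed in three steps. First, write $I_t \eqdef \int_0^t F(u^N(s))\,ds$, expand $I_t^2$ as a double integral over $[0,t]^2$, and use the symmetry $r\leftrightarrow s$ to rewrite
\begin{equation*}
\Exp[I_t^2] = 2\int_0^t \int_0^s \Exp\bigl[F(u^N(s)) F(u^N(r))\bigr]\,dr\,ds.
\end{equation*}
By the Markov property together with the stationarity granted by Lemma~\ref{lem:invariant}, for $r\le s$ one has $\Exp[F(u^N(s)) F(u^N(r))] = \E[F(\eta)\, (P_{s-r}^N F)(\eta)]$, so after the change of variables $u = s-r$ and a Fubini one gets
\begin{equation*}
\Exp[I_t^2] = 2\int_0^t (t-u)\, \E[F(\eta)\, P_u^N F(\eta)]\,du.
\end{equation*}

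Second, I would take the Laplace transform in $t$ of this expression, swap the order of integration (which is legitimate because $F\in L^2(\eta)$ and $P_u^N$ is a contraction on $L^2(\eta)$, making the integrand nonnegative after replacing $F$ with $|F|$ for a dominating bound), and use the elementary identity $\int_0^\infty e^{-\lambda t}(t-u)\,\mathbf{1}_{t\ge u}\,dt = e^{-\lambda u}/\lambda^2$ to obtain
\begin{equation*}
\int_0^\infty e^{-\lambda t}\Exp[I_t^2]\,dt = \frac{2}{\lambda^2}\int_0^\infty e^{-\lambda u}\, \E[F(\eta)\, P_u^N F(\eta)]\,du.
\end{equation*}

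Third, I would invoke the standard resolvent identity $(\lambda-\gen)^{-1} = \int_0^\infty e^{-\lambda u} P_u^N\,du$, valid as an operator on $L^2(\eta)$ for any $\lambda>0$ since $\gen$ generates a strongly continuous contraction semigroup (again by the invariance of $\eta$). Applying this identity inside the $L^2(\eta)$-inner product with $F$ yields the right-hand side of \eqref{e:Laplace} and concludes the proof.

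The only genuinely delicate point is the justification of Fubini and of the resolvent identity in the present Hilbert space setting; once the invariance of $\eta$ from Lemma~\ref{lem:invariant} is in hand and one knows $F\in L^2(\eta)$ so that $\|P_u^N F\|_{L^2(\eta)} \le \|F\|_{L^2(\eta)}$ uniformly in $u$, both are routine. The rest of the argument is essentially an exchange of integrations, so I would not expect any real obstacle beyond carefully writing down these exchanges.
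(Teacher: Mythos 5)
Your proposal is correct and follows essentially the same route as the paper's proof: expanding the square into a double time integral, using the Markov property and stationarity to express the two-time correlation through the semigroup, computing the Laplace transform via Fubini, and concluding with the resolvent identity $\int_0^\infty e^{-r(\lambda-\gen)}\,\dd r=(\lambda-\gen)^{-1}$. The only difference is that you spell out the integrability justification (via the contraction bound $\|P_u^N F\|_{L^2(\eta)}\leq\|F\|_{L^2(\eta)}$), which the paper leaves implicit.
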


\begin{proof}
Notice that we can rewrite the expectation at the left hand side of~\eqref{e:Laplace} as
\begin{equs}
\Exp\left[ \left(\int_0^tF(u^N(s))\dd s\right)^2 \right]
&=
2 \int_0^t \dd s \int_0^s \dd r \, \Exp\left[  F(u^N(r))F(u^N(s)) \right]
\\
&=
	2 \int_0^t \dd s \int_0^s \dd r \, \Exp[ F(u^N(r)) \Exp[ F(u^N(s)) | \filt_r] ],
\end{equs}
where $\filt$ denote the natural filtration of the process $\{u^N(t)\}_{t\geq 0}$. 
Now, $u^N$ is a Markov process, it generates a semi group which we denote by $\{e^{t\gen}\}_{t\geq 0}$, and at fixed time is distributed according to the 
law of $\eta$. Therefore, the right hand side of the previous is equal to
\begin{equs}
2 \int_0^t \dd s \int_0^s \dd r \, \E \Big[ F(\swn) &\Exp^{\swn}\left[F(u^N(s-r))\right]\Big]
=
2 \int_0^t \dd s \int_0^s \dd r \, \E \left[ F (\swn) e^{(s-r)\gen} F (\swn)\right]
\\
&=
2 \int_0^t \dd r \, (t-r) \E[ F (\swn)\, e^{r\gen} F (\swn)]\,.
\end{equs}
Here we use the symbol $\Exp^\swn$ to denote the expectation with respect to the law of the process 
$\{u^N(t)\}_{t\geq0}$ conditioned to start at $t=0$ from $\eta$.
Notice that the expectation in the last term above does not depend on $t$, hence its Laplace transform is given by 
\begin{equs}
2 \int_0^\infty \dd t  \int_0^t \dd r \, 
(t-r) &e^{-\lambda(t-r)}
\E\left[ F(\swn) e^{- r(\lambda - \gen)} F(\swn) \right]\\
&=
\frac{2}{\lambda^2} 
\E\Big[ F(\swn) \int_0^\infty dr \, e^{- r(\lambda - \gen)} F(\swn) \Big]
\end{equs}
where the equality is obtained by simply changing the order of integration. 
The conclusion now follows by applying the equality $\int_0^\infty dr \, e^{-r(\lambda-\gen)} = (\lambda-\gen)^{-1}$. 
\end{proof}

The advantage of the previous statement is twofold. At first,  notice that, while in principle the expectation at the left hand side 
of~\eqref{e:Laplace} depends on the distribution of the solution at different (at least $2$) points in time 
the right hand side only depends on the law of the invariant measure, which is explicitly known. 
Moreover, even though it is hard in general to invert the full generator (which is what seems to be required
in order to exploit Lemma~\ref{lem:Laplace}), the expression on the right hand side of~\eqref{e:Laplace} 
allows for a variational formulation which turns out to be easier to manipulate. 

This variational formula is given in~\cite[Theorem 4.1]{Komorowski2012} and, below, we 
state it in the way in which we will use it in the remainder of the section.

\begin{lemma}\label{lem:VarFor}
\emph{(Variational formula)} Let $\gen$ be the generator of the Markov process $\{u^N(t)\}_{t\geq 0}$ and let
$\gensy$ and $\gena$ defined in~\eqref{e:gens} and~\eqref{e:gena} be its symmetric and antisymmetric part with respect to the white noise measure $\swn$. Let $F\in L^2(\swn)$
and denote by $\langle\cdot,\cdot\rangle_\swn$ the scalar product in $L^2(\swn)$, then 
for every $\lambda>0$, 
\begin{equation}\label{e:VarFor}
\begin{split}
\langle F,(\lambda- \gen)^{-1} F\rangle_\swn=\sup_{G}\Big\{ 2\langle F,G\rangle_\swn -& \langle(\lambda-\gensy)G ,G\rangle_\swn\\
&-  \langle\gena G ,(\lambda-\gensy)^{-1}\gena G\rangle_\swn   \Big\}
\end{split}
\end{equation}
where $G$ ranges over a fixed core of $\gen$. 
\end{lemma}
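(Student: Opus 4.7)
My plan is to prove the identity by a two-sided argument: upper bound the functional inside the supremum by the left-hand side for every $G$ in the core via a completion-of-the-square manipulation, and then argue that the upper bound is attained (in the limit) along a sequence of core elements. Throughout, abbreviate $S \eqdef \lambda-\gensy$, which is a positive self-adjoint operator on $L^2(\eta)$ with bounded inverse, and write $\Phi(G)$ for the expression inside the supremum in~\eqref{e:VarFor}.

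First I would set $u \eqdef (\lambda-\gen)^{-1}F$, so that $Su - \gena u = F$. Using that $S$ is symmetric and $\gena$ antisymmetric with respect to $\langle\cdot,\cdot\rangle_\eta$, two identities follow at once: $\langle F,u\rangle_\eta = \langle Su,u\rangle_\eta$ (because $\langle \gena u,u\rangle_\eta=0$) and, for any $G$ in the core,
\begin{equation*}
\langle F,G\rangle_\eta = \langle Su,G\rangle_\eta - \langle \gena u,G\rangle_\eta = \langle u,SG\rangle_\eta + \langle u,\gena G\rangle_\eta.
\end{equation*}
Plugging these into $\langle F,u\rangle_\eta - \Phi(G)$ and rearranging, I expect to obtain the clean identity
\begin{equation*}
\langle F,u\rangle_\eta - \Phi(G) = \bigl\|S^{1/2}(u-G) - S^{-1/2}\gena G\bigr\|_{L^2(\eta)}^{2},
\end{equation*}
where the cross-term $2\langle S^{1/2}G,S^{-1/2}\gena G\rangle_\eta = 2\langle G,\gena G\rangle_\eta$ vanishes by antisymmetry of $\gena$ (this is the key cancellation). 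This yields $\Phi(G)\le \langle F,u\rangle_\eta$ for every $G$ in the core, hence for every $G$ in the closure of the core in the Dirichlet-type norm $G\mapsto \langle SG,G\rangle_\eta+\langle \gena G,S^{-1}\gena G\rangle_\eta$.

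For the matching lower bound I would identify the formal optimiser. The square above vanishes precisely when $S(u-G^\star)=\gena G^\star$, i.e. when $(\lambda-\gensy+\gena)G^\star = Su = F+\gena u$, which means $G^\star = (\lambda-\gen^\ast)^{-1}(F+\gena u)$ with $\gen^\ast = \gensy-\gena$ the $L^2(\eta)$-adjoint. Since $F\in L^2(\eta)$ and $u$ lies in the domain of $\gen$, this $G^\star$ lies in the domain of $\gen^\ast$, which by assumption shares a common core with $\gen$. Approximating $G^\star$ by a sequence $G_n$ from the core in the relevant graph norm, one has $\Phi(G_n)\to \langle F,u\rangle_\eta$, which closes the inequality to an equality and proves~\eqref{e:VarFor}.

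The main obstacle is the functional-analytic step of checking that the supremum over the core actually attains the value computed from the (generally non-core) optimiser $G^\star$. This amounts to verifying that $S^{1/2}(\cdot)$ and $S^{-1/2}\gena(\cdot)$ are simultaneously continuous in the chosen graph topology on the common core of $\gen$ and $\gen^\ast$, so that $\Phi(G_n)\to\Phi(G^\star)$ along an approximating sequence. For our setting this is standard since $\gensy$ is the generator of a Gaussian dynamics and $\gena$ has a tractable Wiener-chaos decomposition (see Lemma~\ref{lem:generator:fockspace}), so cylinder functions form a natural core on which all these operators act continuously; consequently, the approximation argument reduces to the usual density of smooth cylinder functions in the relevant Sobolev-type space associated to $S$, and the variational identity~\eqref{e:VarFor} follows.
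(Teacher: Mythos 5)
Your algebra is correct and your route is genuinely different from the paper's: the paper disposes of this lemma in two lines by citing \cite[Theorem 4.1]{Komorowski2012} and applying its first equality twice to rewrite the term $\|\gena G\|_{-1,\lambda}^2$ as $\langle \gena G,(\lambda-\gensy)^{-1}\gena G\rangle_\swn$, whereas you give a self-contained proof by completing the square — which is, in substance, a re-derivation of the cited result rather than an application of it. The completion-of-the-square identity checks out (I verified the expansion: both sides equal $\langle (\lambda-\gensy)u,u\rangle_\swn-2\langle(\lambda-\gensy)u,G\rangle_\swn-2\langle u,\gena G\rangle_\swn+\langle(\lambda-\gensy)G,G\rangle_\swn+\langle\gena G,(\lambda-\gensy)^{-1}\gena G\rangle_\swn$), and it gives the upper bound $\Phi(G)\le\langle F,(\lambda-\gen)^{-1}F\rangle_\swn$ for core elements $G$ cleanly, modulo interpreting $\langle(\lambda-\gensy)u,u\rangle_\swn$ in the form sense. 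What your sketch buys is transparency (one sees exactly where antisymmetry of $\gena$ enters and what the optimiser is); what the citation buys is precisely the functional-analytic content you defer to the last paragraph: that $u=(\lambda-\gen)^{-1}F$ lies in the domains of $\gensy$ and $\gena$ separately (so that $\gena u\in L^2(\swn)$ and $G^\star=(\lambda-\gen^\ast)^{-1}(F+\gena u)$ is well defined), and that the fixed core is dense in the topology generated by $\|(\lambda-\gensy)^{1/2}\cdot\|_\swn+\|(\lambda-\gensy)^{-1/2}\gena\cdot\|_\swn$, so that $\Phi(G_n)\to\Phi(G^\star)$. Neither point is automatic for an abstract Markov generator — the domain of $\gen$ need not split along the symmetric/antisymmetric decomposition — and these verifications are exactly what the Komorowski--Landim--Olla machinery ($H_{1}$/$H_{-1,\lambda}$ spaces, sup taken over a common core so the optimiser is never evaluated) is designed to handle. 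In the present model they can indeed be checked by hand through the Fock-space representation of Lemma~\ref{lem:generator:fockspace} (cylinder functions, $\gensy$ diagonal on chaoses, $\gena=\genap+\genam$ shifting chaoses by one), but as written your final paragraph asserts rather than proves them; either carry out that verification or, as the paper does, quote the cited theorem and reduce the lemma to rewriting the $\|\gena G\|_{-1,\lambda}^2$ term.
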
 
\begin{proof}
The lemma is a direct consequence of~\cite[Theorem 4.1]{Komorowski2012}. Indeed, it suffices to apply the first equality 
in the previous statement twice, so to simplify the term $\| A g\|_{-1,\lambda}^2$. 
\end{proof}

Thanks to the variational formula above, in order to obtain the lower bounds we 
are looking for, it suffices to find \emph{one} $G$ for which the quantity in brackets in~\eqref{e:VarFor} is bounded from below by a positive constant uniformly in $N$. 
The functional $F$ to which we will apply Lemmas \ref{lem:Laplace} and \ref{lem:VarFor} is the nonlinearity $\const\cN^N$, which, 
for fixed $N$, is a cylinder function belonging to a {\it fixed} (the second) Wiener chaos. 

Using the explicit expressions for $\gens$ and $\gena$, and 
the decomposition of $\gena$ from Lemma~\eqref{lem:generator:fockspace}, 
we are indeed able to determine such a function $G$ and consequently prove the following proposition. 

\begin{proposition}\label{p:LowerBound}
Let $\varphi\in H^1$ and $\cN^N$ be defined according to~\eqref{e:nonlin}. Let $C>0$ be such that 
\begin{equation}\label{e:ScalingRC1}
\const\sim\sqrt{\frac{C}{\log N}}\,,\qquad\text{as $N\to\infty$.}
\end{equation}
Then, there exists a constant $\delta>0$ independent of $N$ and $\varphi$ such that 
\begin{equation}\label{e:LowerBound}
\E\left[ \const\cN^N[\eta](\varphi)(\lambda- \gen)^{-1} \const\cN^N[\eta](\varphi)\right]\geq C\pi \delta\|\varphi\|_{1,2}^2
\end{equation}
for all $\lambda>0$. 
\end{proposition}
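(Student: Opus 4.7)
The plan is to apply the variational formula from Lemma~\ref{lem:VarFor} with $F = \const\cN^N[\eta](\varphi)$ and a test function $G$ in the second Wiener chaos (where $F$ itself lives). The natural choice, which decouples the first two terms of~\eqref{e:VarFor}, is $G = (\lambda-\gensy)^{-1}F$; since $F\in\cH_2$ and $\gensy$ preserves each chaos, $G$ is again in $\cH_2$, with Fourier kernel $\hat g(k_1,k_2) = \const\nonlin_{k_1,k_2}\varphi_{-k_1-k_2}/(\lambda + \tfrac12(|k_1|^2+|k_2|^2))$. Substituting into~\eqref{e:VarFor} gives
$$\langle F,(\lambda-\gen)^{-1}F\rangle_\eta \ge \langle F,(\lambda-\gensy)^{-1}F\rangle_\eta - \langle \gena G,(\lambda-\gensy)^{-1}\gena G\rangle_\eta,$$
so the task reduces to bounding the first term from below and the second from above.

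For the first (``main'') term, using Lemma~\ref{lem:generator:fockspace} and changing variables to $k=k_1+k_2$, $\ell=k_1$ yields
$$\langle F,(\lambda-\gensy)^{-1}F\rangle_\eta = 2\const^2\sum_{k\in\Z^2_0}|k|^2|\varphi_k|^2\,S_N(k,\lambda),$$
where
$$S_N(k,\lambda) = \sum_{\ell\in\Z^2_0}\frac{c(\ell,k-\ell)^2\,\mathds{1}_{|\ell|,|k-\ell|\le N}}{|\ell|^2|k-\ell|^2\bigl(\lambda+\tfrac12(|\ell|^2+|k-\ell|^2)\bigr)},\qquad c(\ell,m)=\ell_2m_2-\ell_1m_1.$$
The analytic heart of the argument is the uniform lower bound $S_N(k,\lambda) \ge \tfrac{\pi}{2}\log N\,(1-o_N(1))$. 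This should follow from a Riemann-sum approximation: for $|\ell|$ large the summand is close to $(\ell_1^2-\ell_2^2)^2/(2|\ell|^6)$, whose angular average over the unit circle equals $\pi/2$, while the radial integral $\int_1^N r^{-1}\dd r$ produces the logarithm. Combined with $\const^2\sim C/\log N$, this yields $\langle F,(\lambda-\gensy)^{-1}F\rangle_\eta \ge C\pi(1-o_N(1))\|\varphi\|_{1,2}^2$.

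For the antisymmetric correction, the decomposition $\gena G = \genap G + \genam G \in \cH_3 \oplus \cH_1$ together with orthogonality splits the term into two explicit nested sums, computable from~\eqref{e:genap:fock}--\eqref{e:genam:fock}. The $\cH_3$ piece is proportional to $\const^4$ times a triple sum involving two factors of $\nonlin^2$ and two resolvent denominators; showing that the iterated sum produces at most one power of $\log N$ gives a bound of order $\const^4\log N\cdot\|\varphi\|_{1,2}^2 = O((\log N)^{-1})\|\varphi\|_{1,2}^2$, which vanishes as $N\to\infty$. The $\cH_1$ piece should be of the same order.

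The main obstacle is precisely this bound on the $\cH_3$ contribution: the concern is that the ``inner'' momentum integration could generate a second logarithmic divergence, leaving a correction comparable to (and possibly eating into) the main term. Ruling this out requires careful exploitation of the cancellations carried by the signed kernel $c(\ell,m)$---in particular its antisymmetry under swapping the two coordinates, which is the very feature that distinguishes the anisotropic from the isotropic regime. Once this is established, inserting both bounds into the variational formula delivers~\eqref{e:LowerBound} with an explicit $\delta>0$.
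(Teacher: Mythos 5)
Your overall frame (the variational formula of Lemma~\ref{lem:VarFor} with a chaos-two test function and a Riemann-sum asymptotic for the main term) matches the paper, but your treatment of the antisymmetric correction has a genuine gap, and it is the heart of the matter. With your choice $G=(\lambda-\gensy)^{-1}F$ the correction $\langle \gena G,(\lambda-\gensy)^{-1}\gena G\rangle_\swn$ does \emph{not} vanish as $N\to\infty$. For the $\cH_3$ part the kernel $\nonlin$ enters \emph{squared}, so the antisymmetry of $c(\ell,m)$ you invoke provides no sign cancellation there; the double momentum sum produces $(\log N)^2$, which is exactly compensated by $\const^4\sim C^2(\log N)^{-2}$, leaving a contribution of order one times $\|\varphi\|_{1,2}^2$ (this is precisely what the paper proves for that term -- it never claims decay). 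For the $\cH_1$ part the situation is worse than ``of the same order'': a brutal bound on $\cF(\genamf\cdot)(k)=8\const^2\sum_{\ell+m=k}c(k,-\ell)c(\ell,m)\,|\ell|^{-2}(|\ell|^2+|m|^2)^{-1}\varphi_{-k}$ grows polynomially in $N$, and one needs a symmetrization of the signed kernel (this is where the cancellation is actually used) to reduce it to $|k|^2\log N$, which after multiplication by $\const^2$ is again $O(1)$, not $o(1)$. So your lower bound reads ``main term minus an $O(1)$ correction'' with no control on whether the correction constant stays below $C\pi$, and the argument does not close.

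The missing idea, and the route the paper takes, is to insert a smallness parameter in the test function: take $G=\delta\,\pois(\varphi)$, a small multiple of the Poisson solution $\gensy\pois(\varphi)=\const\cN^N(\varphi)$. Then in \eqref{e:VarFor} the gain $2\langle F,G\rangle_\swn$ is \emph{linear} in $\delta$ and asymptotically equals $2\delta C\pi\|\varphi\|_{1,2}^2$, while the three loss terms -- including the non-vanishing $O(1)$ bounds for the $\genap$ and $\genam$ contributions described above, and the term $\langle(\lambda-\gensy)G,G\rangle_\swn$ -- are \emph{quadratic} in $\delta$. Choosing $\delta$ small (independently of $N$, $\lambda$ and $\varphi$) yields a lower bound of the form $\delta C\pi(2-\delta\tilde C)\|\varphi\|_{1,2}^2>0$, which is also why the statement carries the factor $\delta$ rather than the full constant $C\pi$. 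Your computation of the main term is fine and coincides with the paper's; but without this $\delta$-trick (or a genuinely sharper estimate on the correction, which the squared kernel rules out for the $\cH_3$ piece) the proposal cannot be completed as written.
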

\begin{proof}
Firstly, we obtain a lower bound of the right hand side of ~\eqref{e:VarFor} by restricting to supremum to (smooth) random variables living in $\cH_2$,
the second homogeneous Wiener chaos of $\eta$.
With this choice, since, by Lemma~\ref{lem:generator:fockspace}, $\gena=\genap+\genam$ and $\genap$
maps $\cH_2$ to $\cH_3$ while $\genam$ maps $\cH_2$ to $\cH_1$, the quantity inside the brackets can be rewritten as
\begin{equation}\label{e:TermsBound}
2\langle \const\cN^N(\varphi), G\rangle-\langle(\lambda-\gensy)G ,G\rangle_\swn- \| (\lambda-\gensy)^{-\half}\genap G \|^2_{\swn} -
\| (\lambda-\gensy)^{-\half}\genam G \|^2_{\swn}
\end{equation}
where $\|\cdot\|_\swn\eqdef \|\cdot\|_{L^2(\swn)}$. Denote by $\one,\,\two,\,\three,\,\four$ each of the summands 
in~\eqref{e:TermsBound}, so that it equals $2\one-\two-\three-\four$.

Notice that, $\one$ is linear while the others are quadratic. In order to take advantage of this fact, 
it suffices to determine a function $G^N$, allowed to depend on $N$, such that, under the scaling~\eqref{e:ScalingRC1}, $\two$-$\four$ are bounded uniformly, while $\one$ is bounded uniformly from below by a positive constant. 

Hence, let $\delta>0$ be a constant to be fixed later and take $G$ to be the solution of the Poisson equation~\eqref{e:solPoiNL}
with $\donst=1$, multiplied by $\delta$, i.e. $G(\eta)\eqdef \delta \pois[\eta](\varphi)$. Then, we have
\begin{equ}
\one= \langle \const\cN^N(\varphi), \delta\pois(\varphi)\rangle_\eta= 
4 \delta \sum_{k\in\Z^2_0}|k|^2\left(\const^2\sum_{\ell+ m=k} \frac{1}{|k|^2}\frac{(\nonlin)_{\ell,m}^2}{|\ell|^2+|m|^2}\right)|\varphi_{-k}|^2\,.
\end{equ}
Now, the quantity in bracket can be analysed with the same tools used in the proof of~\eqref{e:LimitQV}, so we 
address the reader to the section below for the details and, here, limit 
ourselves to outline the procedure and highlight the main steps. By a Riemann sum
approximation, we have 
\begin{equs}
\const^2&\sum_{\ell+ m=k} \frac{1}{|k|^2}\frac{(\nonlin)_{\ell,m}^2}{|\ell|^2+|m|^2}=
\const^2\sum_{|\ell|,|k-\ell|\leq N} \frac{c(\ell,k-\ell)}{|\ell|^2|k-\ell|^2(|\ell|^2+|k-\ell|^2)}\\
&=\const^2\sum_{|\ell|,|k-\ell|\leq N} \frac{1}{|k|^2}\tilde f_k\left(\frac{\ell}{|k|}\right)\approx\const^2
\int_{\frac{5}{2}\leq|x|\leq \frac{N}{|k|}}\frac{c(x,\tilde k-x)^2}{|x|^2|\tilde k-x|^2(|x|^2+|\tilde k-x|^2)}\dd x
\end{equs}
where $\tilde f_k(x)$ coincides with the integrand on the last term of the previous equality and $\tilde k= k/|k|$. Since $\tilde k$
has norm one, let $\theta_k\in[0,2\pi)$ be such that $\tilde k= (\cos \theta_k,\sin\theta_k)$. Then, passing to polar coordinates
and neglecting all terms in the integral which are uniformly bounded in $N$ (since they are then 
killed by the vanishing constant $\const$), the previous is approximated by 
\begin{equs}
\frac{\const^2}{4}&\int_0^{2\pi} \int_{\frac{5}{2}}^{\frac{N}{|k|}}\frac{r\cos^2(2\theta)}{(r-\cos(\theta-\theta_k))^2} \dd r\dd \theta\\
&\approx 
\frac{1}{4} \int_0^{2\pi} \cos^2(2\theta) \left(\const^2 \log\left(\frac{N/|k|-\cos(\theta-\theta_k)}{5/2-\cos(\theta-\theta_k)}\right)\right)
\dd \theta\overset{N\to\infty}{\longrightarrow}C\,\frac{\pi}{4}\,.
\end{equs}
which implies that as $N$ goes to $\infty$, 
\begin{equation}\label{b:one}
\one\sim \delta C_\one \|\varphi\|_{1,2}^2
\end{equation}
where $C_\one\eqdef C\pi$. For $\two$, notice that, by definition of $\pois(\varphi)$, as $N\to\infty$ we have
\begin{equation}\label{b:two}
\two= \delta^2 \langle (\lambda-\gensy) \pois(\varphi), \pois(\varphi)\rangle_\eta=\lambda\delta^2 \| \pois(\varphi)\|_\eta^2-\delta \one\sim -\delta^2 C_\one \|\varphi\|_{1,2}^2
\end{equation}
locally uniformly in $\lambda$. The last passage will be justified in detail in the proof of Corollary~\ref{cor:Cherry} 
where we will see that the $L^2(\eta)$-norm of $\pois(\varphi)$ converges to $0$ as $N\to\infty$ (see~\eqref{e:Pois0}). 

We can now focus on $\three$. By Lemma~\ref{lem:generator:fockspace}, the Fourier transform of the kernel (in Fock space
representation) of $(\lambda-\gensy)^{-\half}\genap \pois(\varphi)$ is given by 
\begin{equ}
\cF\left( (\lambda-\gensyf)^{-\half}\genapf\mathfrak{H}^N_\varphi\right)(\ell,m,n)= 
4\const^2 \frac{\nonlin_{\ell,m}\nonlin_{\ell+m,n}\, \varphi_{-\ell-m-n}}{(\lambda +\frac{1}{2}(|\ell|^2+|m|^2+|n|^2)^\half(|\ell+m|^2+|n|^2)}
\end{equ}
where we denoted by $\mathfrak{H}^N_\varphi$ the kernel of $\pois(\varphi)$. Hence, we get 
\begin{equs}
\|(\lambda&-\gensy)^{-\half}\genap \pois(\varphi)\|_{\swn}^2\\
&\lesssim  \sum_{k\in\Z^2_0}\left(\const^4\sum_{\ell+m+n=k}\frac{(\nonlin_{\ell,m}\nonlin_{\ell+m,n})^2}{(\lambda +\frac{1}{2}(|\ell|^2+|m|^2+|n|^2))(|\ell+m|^2+|n|^2)^2}\right)|\varphi_{-k}|^2\\
\end{equs}
By bounding brutally $(\nonlin_{\ell,m}\nonlin_{\ell+m,n})^2\leq |k|^2 |n|^{-2}$, the quantity in brackets above can be 
treated as
\begin{equs}
\const^4&\sum_{\ell+m+n=k}\frac{(\nonlin_{\ell,m}\nonlin_{\ell+m,n})^2}{(\lambda +\frac{1}{2}(|\ell|^2+|m|^2+|n|^2))(|\ell+m|^2+|n|^2)^2}\\
&\leq |k|^2 \const^2\sum_{|n|\leq N}\frac{1}{|n|^2}\const^2\sum_{|\ell|\leq N}\frac{1}{\lambda +\frac{1}{2}(|\ell|^2+|m|^2+|n|^2)}
\leq |k|^2 \left(\const^2\sum_{|\ell|\leq N}\frac{1}{|\ell|^2}\right)^2
\end{equs}
and the choice of $\const$ guarantees that the previous is uniformly bounded by $|k|^2$. Therefore, there exists a constant
$C_\three>0$ independent of $N,\lambda,\varphi$ such that 
\begin{equation}\label{b:three}
\three\leq \delta^2 C_\three \|\varphi\|_{1,2}^2\,.
\end{equation}
It remains to study $\four$. Again by Lemma~\ref{lem:generator:fockspace}, the Fourier transform of the kernel (in Fock space
representation) of $\genam \pois(\varphi)$ is given by 
\begin{equs}
\cF\left( \genamf\mathfrak{H}^N_\varphi\right)(k)=
4\const  \sum_{\ell+m=k} \nonlin_{k,-\ell}\cF(\mathfrak{H}^N_\varphi)(\ell,m)
=8\const^2\sum_{\ell+m=k} \frac{c(k,-\ell)c(\ell,m)}{|\ell|^2(|\ell|^2+|m|^2)}\varphi_{-k}\,.
\end{equs}
Let us observe the inner sum more carefully. Define $K(\ell,m)\eqdef c(\ell,m)(|\ell|^2+|m|^2)^{-1}$, which is clearly 
symmetric in $\ell$ and $m$. Then, by changing variables in the sum ($\ell\to k-\ell$) and using the fact that $c$ is a 
symmetric bilinear form in its arguments (the antisymmetry is only by swapping the coordinates of both variables), so that 
in particular $c(\ell,-m)=-c(\ell,m)$, we have
\begin{equs}
&\sum_{\ell+m=k} \frac{c(k,-\ell)}{|\ell|^2}K(\ell,k-\ell)=
\half\sum_{|\ell|,|k-\ell|\leq N}\left( \frac{c(k,-\ell)}{|\ell|^2}-\frac{c(k,k-\ell)}{|k-\ell|^2} \right) K(\ell,k-\ell)\\
&=-\frac{c(k,k)}{2}\sum_{|\ell|,|k-\ell|\leq N}\frac{K(\ell,k-\ell)}{|k-\ell|^2}+\half\sum_{|\ell|,|k-\ell|\leq N}c(k,\ell)\left( \frac{1}{|k-\ell|^2}-\frac{1}{|\ell|^2} \right) K(\ell,k-\ell)\,.
\end{equs} 
Now, since for any $\ell,m$, $|K(\ell,m)|\lesssim 1$ and $|c(\ell,m)|\leq |\ell||m|$, it is immediate to see 
that the first summand is bounded by $|k|^2\log N$. For the second, by Taylor formula (holding at least for $|\ell|$ large enough, 
say $|\ell|>2|k|$) we have
\begin{equ}
\left|\sum_{|\ell|,|k-\ell|\leq N}c(k,\ell)\left( \frac{1}{|k-\ell|^2}-\frac{1}{|\ell|^2} \right) K(\ell,k-\ell)\right|
\lesssim |k|^2\sum_{|\ell|\leq N} \frac{1}{|\ell|^2}\lesssim |k|^2 \log N\,.
\end{equ}
Exploiting~\eqref{e:ScalingRC1} to get rid of the $\log$ divergence, it follows that there exists a constant $C_\four>0$ for which 
\begin{equ}
\|(\lambda-\gensy)^{-\half}\genam \pois(\varphi)\|_\swn^2=\sum_{k\in\Z^2_0}\frac{|\cF\left( \genamf\mathfrak{H}^N_\varphi\right)(k)|^2}{\lambda+|k|^2}\leq C_\four\sum_{k\in\Z^2_0}\frac{|k|^4|\varphi_{-k}|^2}{\lambda+|k|^2}\leq C_\four \|\varphi\|_{1,2}^2
\end{equ}
and consequently
\begin{equation}\label{b:four}
\four\leq \delta^2C_\four\|\varphi\|_{1,2}^2\,.
\end{equation}
Collecting~\eqref{b:one},~\eqref{b:two},~\eqref{b:three},~\eqref{b:four}, we see that~\eqref{e:TermsBound} is bounded below by 
$\delta{C_\one}(2-\delta \tilde C)\|\varphi\|_{1,2}^2$, where $\tilde C\eqdef 2 C_\one^{-1}\max\{C_\one,C_\three,C_\four\}-1$, 
and therefore, for any $\delta\in (0, 2\tilde C^{-1})$,~\eqref{e:LowerBound} holds. 
\end{proof}

As an immediate consequence of the previous proposition, we can show that the 
Laplace transform of the integral in time of the (rescaled) nonlinearity of~\eqref{e:AKPZ:u} is (uniformly) bounded 
from above and below. 

\begin{corollary}\label{cor:NontrivNonlin}
For any $N\in\N$, let $u^N$ be the solution of~\eqref{e:AKPZ:u} and $\cN^N$ be the functional defined by~\eqref{e:nonlin}. 
Assume $\donst=1$ for all $N$ and the sequence of positive constants 
$\const$ satisfies the scaling relation~\eqref{e:ScalingRC1}. Then, 
there exists a constant $\delta>0$ such that for any $\lambda>0$, $\varphi\in H^1$ and $N\in\N$ we have
\begin{equation}\label{e:L-Ubound}
\frac{\delta}{\lambda^2}\|\varphi\|_{1,2}^2\leq \int_0^\infty e^{-\lambda t} \Exp\left[ \left(\int_0^t\const \cN^N[u^N(s)](\varphi)\dd s\right)^2 \right]\dd t\leq \frac{\delta^{-1}}{\lambda^2}\|\varphi\|_{1,2}^2\,.
\end{equation}
\end{corollary}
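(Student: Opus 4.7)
The plan is to obtain the two bounds in \eqref{e:L-Ubound} from essentially different tools: the upper bound follows from the energy estimate of Lemma~\ref{lem:energy-estimate}, while the lower bound follows from the identity of Lemma~\ref{lem:Laplace} combined with the variational lower bound of Proposition~\ref{p:LowerBound}.

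For the upper bound, I would simply apply \eqref{e:nonlin:energy-estimate} of Lemma~\ref{lem:energy-estimate} with $p=2$ and $\donst=1$, together with the scaling assumption \eqref{e:ScalingRC1}. This yields
\begin{equation*}
\Exp\Big[\Big|\int_0^t \const \cN^N[u^N(s)](\varphi)\,\dd s\Big|^2\Big]\lesssim t\,\const^2(\log N)\|\varphi\|_{1,2}^2\lesssim t\,\|\varphi\|_{1,2}^2\,,
\end{equation*}
uniformly in $N$. Multiplying by $e^{-\lambda t}$ and integrating in $t$ over $\R_+$ gives the factor $\int_0^\infty t\,e^{-\lambda t}\,\dd t=\lambda^{-2}$ and hence the right-hand inequality in \eqref{e:L-Ubound} with a suitable constant.

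For the lower bound, Lemma~\ref{lem:Laplace} (applied to the $L^2(\eta)$-random variable $F=\const\cN^N[\eta](\varphi)$, which lives in the second Wiener chaos and is therefore square-integrable for each fixed $N$) rewrites the Laplace transform as
\begin{equation*}
\int_0^\infty e^{-\lambda t}\Exp\Big[\Big(\int_0^t\const\cN^N[u^N(s)](\varphi)\,\dd s\Big)^2\Big]\dd t
=\frac{2}{\lambda^2}\,\E\bigl[\const\cN^N[\eta](\varphi)\,(\lambda-\gen)^{-1}\const\cN^N[\eta](\varphi)\bigr].
\end{equation*}
Proposition~\ref{p:LowerBound}, whose proof has just been carried out, then bounds the right-hand side from below by $2C\pi\delta\lambda^{-2}\|\varphi\|_{1,2}^2$ for some $\delta>0$ independent of $\lambda$, $N$ and $\varphi$. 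Adjusting the single constant $\delta$ so that it handles simultaneously the upper and the lower bound then gives~\eqref{e:L-Ubound}.

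I do not expect a real obstacle here: all the heavy lifting has already been done. The only point one has to be slightly careful about is that the identity in Lemma~\ref{lem:Laplace} requires $F\in L^2(\eta)$, which for fixed $N$ is automatic because the truncated nonlinearity $\const\cN^N[\eta](\varphi)$ is a finite sum of Hermite polynomials of order two in the $\eta_k$'s with $|k|_\infty\le N$; and that the lower bound in Proposition~\ref{p:LowerBound} is genuinely uniform in $\lambda>0$, which it is by inspection of its proof (the constant $\delta$ there is chosen independently of $\lambda$). Both hypotheses being in place, \eqref{e:L-Ubound} follows without further work.
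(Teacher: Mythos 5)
Your argument is correct and coincides with the paper's own proof: the upper bound is obtained from \eqref{e:nonlin:energy-estimate} with $p=2$ and the Laplace transform of $f(t)=t$, while the lower bound follows by combining Lemma~\ref{lem:Laplace} with Proposition~\ref{p:LowerBound}. The additional checks you make (that $\const\cN^N[\eta](\varphi)\in L^2(\eta)$ for fixed $N$ and that the constant in Proposition~\ref{p:LowerBound} is uniform in $\lambda$) are sound and consistent with the paper.
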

\begin{proof}
The proof of the lower bound in~\eqref{e:L-Ubound} is a direct consequence of Lemma~\ref{lem:Laplace} and 
Proposition~\ref{p:LowerBound}. The upper bound instead follows by~\eqref{e:nonlin:energy-estimate} in 
Lemma~\ref{lem:energy-estimate}, upon taking $p=2$ and evaluating the Laplace transform of $f(t)=t$. 
%
\end{proof}

In the following proposition we collect the results obtained so far and provide a description of the limit points of the 
sequence $u^N$. 

\begin{proposition}\label{p:LimitPoints}
For $N\in\N$, let $u^N$ be the stationary solution of~\eqref{e:AKPZ:u}.
Assume $\donst=1$ and the sequence of constants $\const$ satisfies~\eqref{e:ScalingRC1}. Then,  
any subsequential limit $(u,\inonlin[u])$ of $\{(u^N,\inonlin[u^N])\}_N$ satisfies
\begin{equation}\label{e:LimitPoints}
u_t(\varphi)-u_0(\varphi)=\frac{1}{2}\int_0^t u_s(\Delta\varphi)\dd s+\cB_t[u](\varphi)+B_t(\varphi)
\end{equation}
for any $\varphi\in H^1$, and $\cB_t[u](\varphi)$ is a stationary stochastic process such that 
\begin{equation}\label{e:smalltime}
\Exp[\cB_t[u](\varphi)^2]\sim t\,,\qquad\text{as $t\to 0$. }
\end{equation}
In particular, it has non-zero finite energy. 
\end{proposition}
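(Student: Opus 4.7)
The plan is to derive (\ref{e:LimitPoints}) by passing to the limit in the weak formulation of (\ref{e:AKPZ:u}), and then to pin down the small-time behaviour of $\Exp[\cB_t[u](\varphi)^2]$ by combining the upper bound from Lemma~\ref{lem:energy-estimate} with the Laplace lower bound from Corollary~\ref{cor:NontrivNonlin}.

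For the identity, I would test (\ref{e:AKPZ:u}) against $\varphi \in H^1$ and rewrite it in integral form as
\begin{equation*}
u^N_t(\varphi) - u^N_0(\varphi) = \tfrac{1}{2}\int_0^t u^N_s(\Delta\varphi)\,\dd s + \inonlin_t[u^N](\varphi) + B_t(\varphi),
\end{equation*}
where the stochastic forcing $B_t(\varphi)\eqdef \int_0^t ((-\Delta)^{1/2}\xi)(s,\varphi)\,\dd s$ is, thanks to the assumption $\donst\equiv 1$, \emph{independent of $N$} and given by a Brownian motion with variance $t\|\varphi\|_{1,2}^2$. Theorem~\ref{thm:tightness} then provides subsequential convergence $(u^N,\inonlin[u^N])\to (u,\cB[u])$ in $C_T^\gamma \CC^\alpha\times C_T^\gamma \CC^\beta$; since the linear drift term passes to the limit by continuity of $\Delta$ and of time integration, (\ref{e:LimitPoints}) follows.

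Stationarity of $u$ is inherited from $u^N$ (Lemma~\ref{lem:invariant}) under weak convergence, and because $\cB[u]$ is the limit of additive functionals of a stationary Markov process, its increments are stationary. The upper bound $\Exp[\cB_t[u](\varphi)^2]\lesssim t\|\varphi\|_{1,2}^2$ is obtained by applying (\ref{e:nonlin:energy-estimate}) with $p=2$ together with the scaling (\ref{e:ScalingRC}) (which absorbs the $\log N$ factor), and then passing to the limit via uniform integrability coming from the higher-$p$ versions of the same bound. For the matching lower bound, I would pass Corollary~\ref{cor:NontrivNonlin} to the limit via Fatou to obtain
\begin{equation*}
\int_0^\infty e^{-\lambda t}\,\Exp[\cB_t[u](\varphi)^2]\,\dd t \;\ge\; \frac{\delta\|\varphi\|_{1,2}^2}{\lambda^2}\qquad\text{for all }\lambda>0,
\end{equation*}
and combine this with the linear upper bound through a Tauberian split: bounding the tail $\int_\epsilon^\infty e^{-\lambda t}\cdot C t\,\dd t \lesssim e^{-\lambda\epsilon}/\lambda^2$ forces, for $\lambda$ sufficiently large, $\sup_{t\le\epsilon}\Exp[\cB_t[u](\varphi)^2]/t \gtrsim \|\varphi\|_{1,2}^2$.

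The main obstacle is upgrading this Tauberian information into a pointwise control as $t\to 0$. For this I would use the subadditivity of $g(t)\eqdef \Exp[\cB_t[u](\varphi)^2]$ coming from stationarity of the increments (via Cauchy--Schwarz applied to $\cB_t=\cB_s+(\cB_t-\cB_s)$ together with $\cB_t-\cB_s\stackrel{d}{=}\cB_{t-s}$), which combined with the Laplace lower bound yields $g(t)/t$ bounded away from zero for small $t$. Independently of this refinement, the Laplace inequality alone already rules out $\cB[u](\varphi)\equiv 0$, so together with the linear upper bound and the partition definition (\ref{eq:energy}) the final clause on non-zero finite energy is immediate.
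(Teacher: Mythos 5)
Your route is essentially the paper's: the identity \eqref{e:LimitPoints} is read off from the weak formulation of \eqref{e:AKPZ:u} together with the tightness of Theorem~\ref{thm:tightness} (with $\donst=1$ the noise term is $N$-independent), the linear-in-$t$ upper bound comes from \eqref{e:nonlin:energy-estimate} with the scaling \eqref{e:ScalingRC1} (higher $p$ giving the uniform integrability needed to pass second moments to the limit), the lower bound comes from transferring \eqref{e:L-Ubound} of Corollary~\ref{cor:NontrivNonlin} to the limit, and your ``Tauberian split'' at $t=\epsilon$ (equivalently at $t=a/\lambda$) is exactly the content and the proof of the paper's Lemma~\ref{lem:ST} in Appendix~\ref{ap:HLTT}. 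The finiteness and non-vanishing of the energy then follow as in the paper.

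The one piece I would push back on is the final ``upgrade'' via subadditivity. First, it is not needed: Lemma~\ref{lem:ST} (and your split) only yield $\limsup_{t\to0}t^{-1}\Exp[\cB_t(\varphi)^2]>0$ together with the upper bound $\Exp[\cB_t(\varphi)^2]\lesssim t\|\varphi\|_{1,2}^2$, and this is all the paper proves (and all that is used for the non-zero finite energy claim). Second, as stated it does not work: Cauchy--Schwarz with stationary increments gives subadditivity of $g(t)^{1/2}$ with $g(t)\eqdef\Exp[\cB_t(\varphi)^2]$, i.e.\ $g(t)^{1/2}\le g(s)^{1/2}+g(t-s)^{1/2}$, and iterating this only yields $g(t/n)\ge n^{-2}g(t)$, a \emph{quadratic} lower bound $g(s)\gtrsim s^2$ at small $s$, not a linear one; so subadditivity alone cannot bound $g(t)/t$ away from zero along every sequence $t\to0$. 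If you drop that paragraph and stop at the Tauberian split, your argument coincides with the paper's proof.
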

\begin{proof}
The validity of~\eqref{e:LimitPoints} is a consequence of Theorem~\eqref{thm:tightness}, hence the only thing to prove 
is that the process $\{\cB_t[u](\varphi)\}_{t\geq 0}$ satisfies~\eqref{e:smalltime}. Once the latter is established, 
we can immediately conclude that the process has non-zero energy, and the fact that it is finite follows 
by~\eqref{e:nonlin:energy-estimate}. 

Now, for~\eqref{e:smalltime}, we need the estimate~\eqref{e:L-Ubound}, which clearly holds also for $\cB_\cdot(\varphi)$, i.e.
\begin{equ}
\frac{\delta}{\lambda^2}\|\varphi\|_{1,2}^2\leq \int_0^\infty e^{-\lambda t} \Exp\left[ \cB_t(\varphi)^2 \right]\dd t\leq \frac{\delta^{-1}}{\lambda^2}\|\varphi\|_{1,2}^2\,,
\end{equ}
and~\eqref{e:nonlin:energy-estimate}. Lemma~\ref{lem:ST}, whose proof is provided in Appendix~\ref{ap:HLTT}, 
allows to conclude.  
\end{proof}

\begin{remark}
The previous proposition marks the difference between the $1$ and the $2$ dimensional case. Indeed, for 
$d=1$,~\cite{GubinelliJara2012}
shows that the solution of the KPZ equation (or stochastic Burgers) is a Dirichlet process, i.e. the sum of a martingale 
and a zero quadratic variation process. In particular, the integral in time of the nonlinearity converges to a $0$-quadratic 
variation process. In the two dimensional anisotropic case instead, the relation~\eqref{e:smalltime} suggests that 
the integral in time of the nonlinearity should morally 
contain a martingale part (hence in particular, if it admitted quadratic variation, it would be non zero) 
whose understanding would represent the main step in the characterisation of the limit points. 
\end{remark}

We conclude this section by stating (and proving) an analogous result at the level of the anisotropic KPZ equation. 
In this context we show that, assuming the noise to have zero average, the time increment of the average of the solution 
does not vanish, thus distinguishing it from the solution of the stochastic heat equation $X$ defined in~\eqref{e:SHE}. 

\begin{theorem}\label{thm:AKPZ-nontriviality}
For $N\in \N$, let $h^N$ be the solution of the smoothened anisotropic KPZ equation~\eqref{e:kpz:reg} and $\tilde X$ be 
the solution of the stochastic heat equation obtained by setting $\const=0$ in~\eqref{e:kpz:reg}, both 
started at $0$ from $\tilde\eta$, defined as in the statement of Theorem~\ref{thm:tightnessAKPZ}. Assume 
that the constants $\donst$ and $\const$ are such 
that $\donst=1$ and $\const$ satisfies~\eqref{e:ScalingRC1}. 

Then, any limit point $\{h(t,\cdot)\}_{t\geq 0}$ of the sequence $\{h^N\}_N$, is a stochastic processes different in law from 
$\{\tilde X(t,\cdot)\}_{t\geq 0}$.
\end{theorem}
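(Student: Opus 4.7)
The strategy is to distinguish $h$ from $\tilde X$ via their zero Fourier modes. Projecting~\eqref{e:kpz:reg} with $\const=0$ against $e_0$ gives $\dd\tilde X_t(e_0)=0$, since $\Delta$ annihilates constants and $\xi$ has vanishing spatial mean; together with $\tilde\eta(e_0)=0$ this forces $\tilde X_t(e_0)\equiv 0$ almost surely. The same projection applied to~\eqref{e:kpz:reg} yields
\[
h^N_t(e_0)=\inonlintilde_t[h^N](e_0)=\int_0^t\const\sum_{|\ell|_\infty\leq N}\frac{\ell_1^2-\ell_2^2}{|\ell|^2}|u^N_\ell(s)|^2\dd s,
\]
with $u^N\eqdef(-\Delta)^{1/2}h^N$; hence it suffices to show that $\Exp[(h^N_t(e_0))^2]$ stays bounded below by a strictly positive constant, uniformly in $N$, for some $t>0$.

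Set $F^N\eqdef\const\tilde\cN^N_0$, a mean-zero element of the second Wiener chaos of the invariant measure $\eta$ of $u^N$ (the mean vanishes by the $\ell_1\leftrightarrow\ell_2$ symmetry of the box $|\ell|_\infty\leq N$). Lemma~\ref{lem:Laplace} gives
\[
\int_0^\infty e^{-\lambda t}\Exp[(h^N_t(e_0))^2]\dd t=\frac{2}{\lambda^2}\E\bigl[F^N(\lambda-\gen)^{-1}F^N\bigr].
\]
I then invoke the variational formula of Lemma~\ref{lem:VarFor} with trial function $G\eqdef -\delta\const\tilde H^N_0$, where $\tilde H^N_0$ is the explicit Poisson solution of $\gensy\tilde H^N_0=\tilde\cN^N_0$ (of the same flavour as~\eqref{e:solPoiNL}), and rerun the four-term analysis of the proof of Proposition~\ref{p:LowerBound}. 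The key input is the Riemann-sum asymptotic
\[
\const^2\sum_{|\ell|_\infty\leq N}\frac{(\ell_1^2-\ell_2^2)^2}{|\ell|^6}\;\xrightarrow[N\to\infty]{}\;c_\ast>0,
\]
valid under the scaling~\eqref{e:ScalingRC1}, which makes the linear contribution $2\langle F^N,G\rangle$ bounded below by a positive constant, while the three quadratic contributions $\langle(\lambda-\gensy)G,G\rangle$, $\|(\lambda-\gensy)^{-\half}\genap G\|_\eta^2$ and $\|(\lambda-\gensy)^{-\half}\genam G\|_\eta^2$ are each $O(\delta^2)$ uniformly in $N$ and in $\lambda$ on compact subsets of $(0,\infty)$. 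Choosing $\delta>0$ small enough then produces $\E[F^N(\lambda-\gen)^{-1}F^N]\geq c>0$.

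Inverting this Laplace estimate via Lemma~\ref{lem:ST}, just as in the proof of Proposition~\ref{p:LimitPoints}, yields $\liminf_N\Exp[(h^N_t(e_0))^2]\gtrsim t$ for all small $t>0$. Combined with the uniform $L^p$ control on $h^N_t(e_0)$ (provided by Lemma~\ref{lem:ito-trick} and Gaussian hypercontractivity in a fixed chaos), uniform integrability of $\{(h^N_t(e_0))^2\}_N$ lets us pass to the limit along the convergent subsequence supplied by Theorem~\ref{thm:tightnessAKPZ} and conclude $\Exp[h_t(e_0)^2]>0$ for some $t>0$, whereas $\tilde X_t(e_0)=0$ almost surely; hence $h$ and $\tilde X$ cannot share the same law.

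The hard part is the variational lower bound in the second paragraph. Although the architecture parallels the proof of Proposition~\ref{p:LowerBound}, the kernel of the zero-mode nonlinearity $\tilde\cN^N_0$ is supported on the diagonal $\ell+m=0$ instead of on $\ell+m=k\neq 0$, so the Fourier representations of $\tilde H^N_0$ and of $\genap\tilde H^N_0$, $\genam\tilde H^N_0$ must be recomputed. In particular, one has to verify that the delicate cancellation inside $\genam\pois(\varphi)$ that was crucial in Proposition~\ref{p:LowerBound} admits a counterpart here, preventing an additional logarithmic divergence that the $\const^2$ prefactor would no longer be able to absorb.
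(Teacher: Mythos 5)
Your proposal follows the paper's proof essentially step by step: distinguish $h$ from $\tilde X$ through the zero Fourier mode, write $h^N_t(e_0)=\inonlintilde_t[h^N](e_0)$, bound its second moment from below via Lemma~\ref{lem:Laplace} and the variational formula of Lemma~\ref{lem:VarFor} with trial function proportional to the Poisson solution $\tilde H^N_0$ of~\eqref{e:PoisAKPZ}, use the Riemann-sum asymptotics under~\eqref{e:ScalingRC1} for the linear term, and conclude as in Proposition~\ref{p:LimitPoints}. The only point you leave open --- checking that the $\genam$ contribution does not produce an unabsorbable logarithm --- is in fact immediate: since the kernel of $\tilde H^N_0$ is supported on the diagonal $\ell+m=0$, formula~\eqref{e:genam:fock} shows that the contracted output would sit in the zero Fourier mode, which is absent (and carries the factor $|\ell+m|=0$ in~\eqref{e:nonlinCoefficient}), so $\genam\tilde H^N_0=0$ identically and the fourth term in~\eqref{e:VarFor} simply drops out; no analogue of the cancellation in Proposition~\ref{p:LowerBound} is needed. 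Your passage to the limit (uniform $L^p$ bounds, uniform integrability, then Lemma~\ref{lem:ST}) is if anything spelled out more carefully than in the paper, though note that Lemma~\ref{lem:ST} applies to a single function, so one should first transfer the Laplace bound to the limit point and then invert it, rather than claim a $\liminf$ in $N$ uniformly in small $t$ directly.
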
 
\begin{proof}
Let $h$ be a limit point of the sequence $\{h^N\}_{N\in\N}$ and $\tilde X$ be the solution of the stochastic heat equation. 
In order to prove the statement, it suffices to exhibit {\it any} observable which is different for $h$ and $\tilde X$. 
An observable easy to treat is the $0$-th Fourier mode $h_0$ and $\tilde X_0$ of $h$ and $X$, i.e. their spatial average. 
Notice that by construction $\tilde X_0=0$, while, by \eqref{e:kpz:reg} one has $h_0^N(t) = \const \tilde\CN^N_0[h^N(s)] ds$, 
so that it suffices to show that there exists $\delta>0$ (a priori depending on $t$) for which
\begin{equation}\label{e:0FM}
\Exp\left[\left( \int_0^t\const \tilde\cN^N_0[h^N(s)]\dd s\right)^2 \right]>\delta\,.
\end{equation}
For this we will exploit the same strategy 
as in the proof of Proposition~\ref{p:LowerBound} and Corollary~\ref{cor:NontrivNonlin}.
To be more precise, we consider the 
Laplace transform of $\Exp[(\int_0^t\const \tilde\cN^N_0[h^N(s)]\dd s)^2]$, to which we apply Lemmas~\ref{lem:Laplace} 
and~\ref{lem:VarFor}. In the variational problem, we take $G$ to be $\theta \tilde H^N_0$, 
where $\theta$ is a positive constant and $\tilde H^N_0$ is the solution 
of the Poisson equation $\gensy \tilde H^N_0(\eta)=\const \cN^N[\eta]$ obtained in~\eqref{e:PoisAKPZ}. 
We now need to control the four terms in the brackets of the right hand side of~\eqref{e:VarFor}.  
We treat the second summand as in~\eqref{b:two}
and, since $\genam\tilde H^N_0=0$, we are left to consider the first and the third, which give
\begin{equs}
\langle \const\tilde \cN^N_0,\tilde H^N_0\rangle&=\theta\const^2\sum_{|\ell|\leq N} \frac{c(\ell,\ell)^2}{|\ell|^6}\overset{N\to\infty}{\longrightarrow} \theta C\pi\\ 
&\text{while}\\
\| (\lambda-\gensy)^{-\half} \genap\tilde H^N_0\|^2_{L^2(\swn)}&=\theta^2\const^4 \sum_{\ell+m=n}\frac{c(n,n)^2}{|n|^6}
\frac{c(\ell,m)^2}{(\lambda +\half(|\ell|^2+|m|^2+|n|^2)|m|^2|\ell|^2}\\
&\lesssim \theta^2\const^4 \left(\sum_{|n|\leq N}\frac{1}{|n|^2}\right)^2\lesssim \theta^2\,.
\end{equs}  
Hence, following the same steps as in the proof of Proposition~\ref{p:LowerBound}, we conclude that there exists $t>0$ and
$\delta>0$ (a priori depending on $t$) for which~\eqref{e:0FM} holds and the proof is concluded.
\end{proof}

\section{Further consequences of the It\^o trick}\label{sec:Consequence}

In this section,
we want to make some further observation on the martingales appearing on the right hand side of~\eqref{eq:Ito}. 
This is done in order to shed some light on the behaviour we might expect for these limit points and could
represent a starting point for their characterisation. 

We want to analyse the martingale associated to the solution of the Poisson equation~\eqref{e:Poisson}
for $V$ given by $\const\cN^N[\eta](\varphi)$, where $\varphi$ is some test function, say $\varphi\in H^1$.
We define
\begin{equation}\label{def:Mart}
\martnl_t(\varphi)\eqdef \donst^\half \martf_t(\pois[u^N](\varphi))
\end{equation}
where the definition of the martingale $\martf$ on the right hand side and $\pois(\varphi)$ can be found in~\eqref{e:Mart} 
and~\eqref{e:solPoiNL} respectively. 
In the following proposition, we show that, upon choosing the renormalising constants in such a way that 
the right hand side of~\eqref{e:nonlin:energy-estimate} is uniformly bounded, in the limit as $N\to\infty$, 
$\martnl_t(\varphi)$ converges to a Brownian motion. 

\begin{proposition}\label{p:MartConv}
Let $\varphi\in H^1$ and, for any $N\in\N$, $\{\martnl_t(\varphi)\}_{t\geq 0}$ be the martingale defined in~\eqref{def:Mart}. 
Let $C>0$ be a real constant for which~\eqref{e:ScalingRC} holds. Then, the sequence of martingales $\{\martnl(\varphi)\}_N$ converges in distribution to a Brownian motion whose 
quadratic variation is given by $t Q(\varphi)$, where $Q(\varphi)$ is defined as
\begin{equation}\label{e:LimitQV}
Q(\varphi)= 4\,C\,\pi\,\|\varphi\|_{1,2}^2\,.
\end{equation} 
\end{proposition}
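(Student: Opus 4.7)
The natural approach is to invoke the martingale central limit theorem for continuous local martingales. Since $\martnl(\varphi)$ is a continuous martingale with $\martnl_0(\varphi)=0$, it suffices to show that the quadratic variation $\langle\martnl(\varphi)\rangle_t$ converges in probability to the deterministic function $t\,Q(\varphi)$ as $N\to\infty$; tightness of $\martnl(\varphi)$ then follows from Burkholder-Davis-Gundy together with the uniform $L^2$-bound on $\langle\martnl(\varphi)\rangle_t$, and the limit must be a continuous centred Gaussian martingale with quadratic variation $Q(\varphi)t$, i.e.\ a time-changed Brownian motion.

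By the definition of $\martnl$ in~\eqref{def:Mart} and the explicit expression~\eqref{e:energy} for the quadratic variation of $\martf$, one has
\begin{equation*}
\langle\martnl(\varphi)\rangle_t=\donst\int_0^t\energy(\pois[u^N](\varphi))(u^N(s))\,\dd s\,.
\end{equation*}
Using stationarity of $u^N$ and the explicit Fock space representation of $\pois(\varphi)$ from~\eqref{e:solPoiNL}, I would compute the expectation of the right-hand side in Fourier variables. A Riemann sum approximation entirely analogous to the one carried out in~\eqref{e:RSapprox} and in the treatment of the term $\one$ in the proof of Proposition~\ref{p:LowerBound} yields a representation of the form
\begin{equation*}
\donst\,\E[\energy(\pois[\eta](\varphi))]=c\,\const^2\donst^{-1}\sum_{n\in\Z^2_0}|n|^2|\varphi_n|^2\,\Sigma_N(n)\,,
\end{equation*}
where $\Sigma_N(n)\sim\sigma\log N$ as $N\to\infty$ for a constant $\sigma$ independent of $n$, and where all numerical constants can be tracked explicitly. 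Combined with the scaling relation~\eqref{e:ScalingRC}, which forces $\const^2\donst^{-1}\log N\to C$, one identifies the limit of $\E[\langle\martnl(\varphi)\rangle_t]$ as $Q(\varphi)\,t=4C\pi\|\varphi\|_{1,2}^2\,t$.

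The remaining step is to control the fluctuations around this mean. Decompose $\energy(\pois(\varphi))(\eta)=\E[\energy(\pois(\varphi))(\eta)]+V^N(\eta)$; since $\pois(\varphi)\in\cH_2$, the centred remainder $V^N$ also belongs to $\cH_2$, and by~\eqref{e:gens:fock} the Poisson equation $\gensy\Phi^N=V^N$ can be solved inside $\cH_2$ by dividing the Fock kernel of $V^N$ by $-\frac{\donst}{2}(|\ell|^2+|m|^2)$. Applying the It\^o trick~\eqref{e:ito-trick} with $p=2$,
\begin{equation*}
\Exp\Big[\Big|\donst\int_0^t V^N(u^N(s))\,\dd s\Big|^2\Big]^{\half}\lesssim \donst^{\frac{3}{2}}t^{\half}\,\E[\energy(\Phi^N)(\eta)]^{\half}\,,
\end{equation*}
and a further Fourier computation -- in which the extra factor $(|\ell|^2+|m|^2)^{-2}$ produced by the Poisson inversion compensates the logarithmic counting of modes responsible for the nontrivial limit in the mean -- shows, under~\eqref{e:ScalingRC}, that the right-hand side tends to $0$. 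Hence $\langle\martnl(\varphi)\rangle_t\to Q(\varphi)t$ in $L^2$, and the martingale central limit theorem gives the desired convergence. The hardest point is the last Fourier estimate: one must carefully track the chaos-two kernel of $V^N$ (a triple convolution of the coefficients $\nonlin_{\ell,m}$ against $\varphi$) through the Poisson inversion and verify that $\donst^3\,\E[\energy(\Phi^N)(\eta)]$ indeed vanishes as $N\to\infty$ in the critical scaling.
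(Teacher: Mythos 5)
Your overall architecture is the same as the paper's: reduce the statement, via the martingale CLT for continuous martingales, to showing that $\langle \martnl(\varphi)\rangle_t=\donst\int_0^t \energy(\pois(\varphi))(u^N(s))\,\dd s$ converges in probability to the deterministic limit $tQ(\varphi)$, and identify the limit of the mean by the Riemann-sum/polar-coordinate computation (this is exactly what the paper does inside this proof). Where you genuinely diverge is the control of the fluctuations: the paper bounds $\mathrm{Var}[\langle \martnl(\varphi)\rangle_t]$ directly, using Minkowski's inequality in time and a Wick computation of $\E[\energy(\pois(\varphi))(\eta)^2]$, which yields $\mathrm{Var}[\langle \martnl(\varphi)\rangle_t]\lesssim t^2\const^4\donst^{-2}\|\varphi\|_{1,2}^4$; since $\const^4\donst^{-2}=(\const^2\donst^{-1})^2$, this vanishes under~\eqref{e:ScalingRC} for \emph{every} admissible choice of $\donst$. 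You instead centre the energy, solve a second Poisson equation in $\cH_2$ (which is legitimate: the centred part $V^N$ does lie in $\cH_2$ and $\gensy$ is invertible there) and apply the It\^o trick a second time.

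The gap is in the final quantitative claim. Track the powers of $\donst$: the kernel of $V^N$ carries the prefactor $\const^2\donst^{-2}$, inverting $\gensy$ costs a further $\donst^{-1}$, and $\E[\energy(\Phi^N)]$ is comparable to $\donst^{-2}\sum_{k,m}(|k|^2+|m|^2)^{-1}|\hat V^N(k,m)|^2$. Contrary to your heuristic, there is no $\log N$ in this object to be ``compensated'': the logarithm in the mean comes from the trace ($\cH_0$) part of the energy, while the chaos-two fluctuation kernel is summable uniformly in $N$ (this is precisely the content of the paper's Wick remainder bound $R_N(\varphi)\lesssim\const^4\donst^{-2}\|\varphi\|_{1,2}^4$), and the extra factor $(|k|^2+|m|^2)^{-1}$ produces no decay in $N$ either, since the sum is dominated by modes of order one. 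Hence the square of your bound is of order $t\,\const^4\donst^{-3}\|\varphi\|_{1,2}^4\sim t\,(C/\log N)^2\,\donst^{-1}$: you lose a factor $\donst^{-1}$ with respect to the paper's estimate. This vanishes when $\donst$ is bounded below (in particular for $\donst\equiv 1$, the case used later in the paper), but the proposition as stated only assumes~\eqref{e:ScalingRC}, which allows $\donst\to 0$ arbitrarily fast; for $\donst$ decaying faster than $(\log N)^{-2}$ your estimate does not close. To prove the statement in full generality you should either replace the second It\^o trick by the direct second-moment (Wick) computation of $\E[\energy(\pois(\varphi))(\eta)^2]$, or explicitly restrict the range of $\donst$.
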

\begin{proof}
According to~\cite[Theorem 7.1.4]{EK}, since for every $N$ the martingale $\martnl(\varphi)$ is continuous, the proof of
the statement follows once we show that its quadratic variation converges in probability to a deterministic function of time. 
Now, the quadratic variation of $\martnl(\varphi)$ is explicit and can be deduced by~\eqref{e:energy}. 
The choice of the renormalisation
constants in~\eqref{e:ScalingRC} and~\eqref{e:energy-estimate} imply that $\langle \martnl_\cdot(\varphi)\rangle$
has bounded moments of all orders so we are left to prove that its variance vanishes in the limit $N\to\infty$ 
and show~\eqref{e:LimitQV}. 
Notice that, by~\eqref{e:QV}, we have 
\begin{equ}
\Exp\left[\langle \martnl_\cdot(\varphi)\rangle_t\right]=t \donst\E\left[\energy(\pois(\varphi))(\eta)\right]
\end{equ}
and 
\begin{equ}
\Exp\left[\langle \martnl_\cdot(\varphi)\rangle_t^2\right]^\frac{1}{2}=\donst\Exp\left[\left(\int_0^t\energy(\pois[u^N(s)](\varphi))\dd s\right)^2\right]^\frac{1}{2}\lesssim t\,\donst \E\left[\energy(\pois(\varphi))(\eta)^2\right]^\frac{1}{2}
\end{equ}
and we can compute the last expectation explicitly. Wick's theorem for the product of 
Gaussian random variables~\cite[Theorem 1.36]{J} gives
\begin{equ}
\donst^2\E\left[\energy(\pois(\varphi))(\eta)^2\right]=
\donst^2
\Big(
	\E\left[\energy(\pois(\varphi))(\eta)\right]
\Big)^2 +R_N(\varphi)
\end{equ}
where the remainder $R_N(\varphi)$ is given by 
\begin{equ}
R_N(\varphi)\eqdef 4^4\const^4\donst^{-2}
\sum_{\substack{\ell_1,\ell_2\in\Z^2_0\\k_1,\dots,k_4\in\Z^2_0}}\Big(\prod_{\substack{i=1,2\\j=1,\dots,4}}
|\ell_i|\nonlintilde_{\ell_i,k_j-\ell_i}\Big)\varphi_{k_1}\varphi_{-k_2}\varphi_{k_3}\varphi_{-k_4} 
\left(\1_{A}+\1_{B}\right)
\end{equ}
and, to shorten the notation, we set $\nonlintilde_{\ell,m}\eqdef (|\ell|^2+|m|^2)^{-1}\nonlin_{\ell,m}$, $\nonlin$ 
as in~\eqref{e:nonlinCoefficient},  and the two sets appearing at the right hand side are 
$A=\{\ell_1,\ell_2,k_1,\dots,k_4\in\Z^2_0\,:\,k_1-\ell_1+k_3-\ell_2=0=\ell_1-k_2+k_4-\ell_2\}$
and $B=\{\ell_1,\ell_2,k_1,\dots,k_4\in\Z^2_0\,:\,k_1-\ell_1+\ell_2-k_4=0=\ell_1-k_2+k_3-\ell_2\}$. 
The two terms can the treated similarly, so we will focus on the second. 
Brutally bounding $|\nonlintilde_{\ell_i,k_j-\ell_i}|\lesssim|k_j|(|\ell_i||k_j-\ell_i|)^{-1}$ and using that, 
when restricted to $B$, we can express $\ell_2$ and $k_4$ in terms of $\ell_1$ 
and $k_1,k_2,k_3$, respectively, we can estimate the above sum by 
\begin{equs}
\const^4\donst^{-2}&\sum_{\substack{k_1,k_2,k_3\in\Z_0^2\\k_4=k_3+k_2-k_1}}\prod_{i=1}^4 |k_i| 
\left(\sum_{\ell\in\Z^2_0} \frac{1}{|k_1-k_2+\ell|^2|\ell|^2}\right)\varphi_{k_1}\varphi_{-k_2}\varphi_{k_3}\varphi_{-k_4} \\
&\lesssim \const^4\donst^{-2}\sum_{\substack{k_1,k_2,k_3\in\Z_0^2\\k_4=k_3+k_2-k_1}}\prod_{i=1}^4 |k_i| 
\varphi_{k_1}\varphi_{-k_2}\varphi_{k_3}\varphi_{-k_4}\lesssim \const^4\donst^{-2}\|\varphi\|^4_{1,2}
\end{equs}
where the passage from the first to the second line is due to the fact that the inner sum converges, while the second is a 
consequence of Young's convolution inequality. 

Therefore, collecting the observations made so far we have
\begin{equ}
\mathrm{Var}\left[\langle \martnl_\cdot(\varphi)\rangle_t\right]=\Exp\left[\langle \martnl_\cdot(\varphi)\rangle_t^2\right]-
\Exp\left[\langle \martnl_\cdot(\varphi)\rangle_t\right]\lesssim t^2 \const^4\donst^{-2}\|\varphi\|^4_{1,2}
\end{equ}
Now, by~\eqref{e:ScalingRC}, the right hand side converges to $0$ as $N$ tends to $\infty$, 
for every $t>0$. In particular, by dominated convergence, this means that $\langle \martnl_\cdot(\varphi)\rangle_t$ 
converges to the limit of its expectation (i.e. to a deterministic function of time). Therefore, it remains to 
identify $\lim_N \Exp [\langle \martnl_\cdot(\varphi)\rangle_t]$ for which 
we need to refine the estimates in the proof of Lemma~\ref{lem:energy-estimate}. 
In~\eqref{e:QV}, we showed the following identity
\begin{equs}
\donst\E\left[\energy(\pois(\varphi))(\eta)\right]=
4^2 \sum_{k\in\Z^2_0} |k|^2 \Big(\const^2\donst^{-1}\sum_{\ell\in\Z^2_0} \frac{|\ell|^2}{|k|^2}|\nonlintilde_{\ell,k-\ell}|^2\Big) 
|\varphi_k|^2
\end{equs}
and the part to control is the one in bracket. By Riemann-sum approximation, we can rewrite the latter (for $k\in\Z^2_0$
fixed such that $|k|\leq N$) as
\begin{equs}
\const^2\donst^{-1}&\sum_{\ell\in\Z^2_0} \frac{|\ell|^2}{|k|^2}|\nonlintilde_{\ell,k-\ell}|^2=
\const^2\donst^{-1}\sum_{|\ell|,|k-\ell|\leq N}\frac{c(\ell,k-\ell)^2}{|\ell|^2(|\ell|^2+|k-\ell|^2)}\\
&=\const^2\donst^{-1}\sum_{|\ell|,|k-\ell|\leq N} \frac{1}{|k|^2}f_k\left(\frac{\ell}{|k|}\right)
\approx \const^2\donst^{-1}\int_{\frac{3}{2}\leq|x|\leq \frac{N}{|k|}} \frac{c(x,\tilde k-x)^2}{|x|^2(|x|^2+|\tilde k-x|^2)^2}\dd x
\end{equs}
where $f_k$ was defined in~\eqref{e:RSapprox} and $\tilde k=k/|k|$. Since $\tilde k$ has Euclidean norm $1$, 
let $\theta_k\in[0,2\pi)$ be such that $\tilde k=(\cos \theta_k,\sin \theta_k)$, so that, by passing to polar coordinates
(and exploiting basic trigonometric identities) the integral becomes
\begin{equs}
\const^2\donst^{-1}\int_0^{2\pi}\int_{\frac{3}{2}}^{\frac{N}{|k|}}&\frac{(r\cos(2\theta) -\cos (\theta+\theta_k))^2}{(2 r^2-2r\cos(\theta-\theta_k)+1)^2}\,r\,\dd r\,\dd \theta\\
&\approx \const^2\donst^{-1}\int_0^{2\pi}\int_{\frac{3}{2}}^{\frac{N}{|k|}}\frac{r\cos^2 (2\theta) }{4(r-\cos(\theta-\theta_k))^2}\,\dd r\,\dd \theta
\end{equs}
and the last approximation holds since the integrals in which at the numerator $r$ is raised to a power smaller 
than $2$ are uniformly bounded in $N$ and therefore they converge to $0$ because of the prefactor $\const^2\donst^{-1}$. 
Now, adding and subtracting $\cos(\theta-\theta_k)\cos^2 (2\theta)$ at the numerator, and arguing as above, 
we can further approximate the quantity above by
\begin{equs}
\approx \const^2\donst^{-1}&\int_0^{2\pi}\int_{\frac{3}{2}}^{\frac{N}{|k|}}\frac{\cos^2 (2\theta) }{4(r-\cos(\theta-\theta_k))}\,\dd r\,\dd \theta\\
&=\frac{1}{4}\int_0^{2\pi} \cos^2 (2\theta) \left(\const^2\donst^{-1}\log\left(\frac{N/|k|- \cos(\theta-\theta_k)}{3/2- \cos(\theta-\theta_k)}\right)\right) \dd \theta\overset{N\to\infty}{\longrightarrow} C\, \frac{\pi}{4}
\end{equs}
where in the last passage we used dominated convergence theorem and the proportionality constant $C>0$
in~\eqref{e:ScalingRC}.
Hence, we conclude that 
\begin{equ}
\Exp\left[\langle \martnl_\cdot(\varphi)\rangle_t\right]\overset{N\to\infty}{\longrightarrow} 4\,\pi\,C\, t\, \|\varphi\|_{1,2}^2
\end{equ}
which completes the proof. 
\end{proof}

\begin{remark}
Notice that, the previous proposition allows to understand the behaviour of the two martingales on the right hand 
side of~\eqref{e:Ito}. In order to obtain a characterisation of the limit points of the nonlinearity we would need to understand the
joint correlation between them. The problem is not easy and out of reach of the techniques of the present paper. 
\end{remark}

As an easy corollary of the previous proposition, we show how to construct the time average of nonlinear unbounded 
functionals of the solution to the stochastic heat equation, purely by martingale techniques. The 
stochastic heat equation
we have in mind, is the stochastic PDE whose expression is given by
\begin{equation}\label{e:SHE}
\partial_t X=\frac{1}{2}\Delta X +(-\Delta)^{\half}\xi\,,\qquad X(0,\cdot)=\eta
\end{equation}
where $\xi$ and $\eta$ are respectively a space time and a space white noise on $\T^2$. Existence and uniqueness
of probabilistically strong solution is well-known and a martingale characterisation can be found, e.g., in~\cite[Appendix D]{MW}. 
We are now ready to state and prove the following. 

\begin{corollary}\label{cor:Cherry}
Let $X$ be the unique stochastic process solving the stochastic heat equation~\eqref{e:SHE}
started at $0$ from the stationary measure $\eta$, a space white noise on $\T^2$. Assume there exists $C>0$ 
such that $\const\sim \sqrt{C (\log N)^{-1}}$. Then, for any $\varphi\in H^1$, $\{\inonlin_t[X^N](\varphi)\}_{t\geq 0}$ 
converges in distribution to a Brownian motion independent from $B_t(\varphi)\eqdef \int_0^t (-\Delta)^{\half}\xi(\dd s,\varphi)$ 
and its covariance function is the same as in Proposition~\eqref{p:MartConv}. 
\end{corollary}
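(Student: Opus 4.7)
The plan is to realise $\inonlin_t[X^N](\varphi)$ as a martingale (modulo vanishing boundary terms) by applying It\^o's formula to the Poisson solution $\pois(\varphi)$ from~\eqref{e:solPoiNL} along the linear SHE dynamics~\eqref{e:SHE}, and then to invoke the multidimensional martingale CLT jointly with $B(\varphi)$. With $\donst=1$, the operator $\gensy$ simultaneously generates \eqref{e:SHE} and satisfies $\gensy\pois(\varphi) = \const\cN^N(\varphi)$ by construction, so It\^o's formula immediately gives
\begin{equation*}
\inonlin_t[X^N](\varphi) = \pois[X^N(t)](\varphi) - \pois[X^N(0)](\varphi) - \tilde\martnl_t(\varphi),
\end{equation*}
where $\tilde\martnl$ is defined exactly as $\martnl$ in~\eqref{def:Mart} but with $X^N$ replacing $u^N$; its quadratic variation is $\int_0^t \energy(\pois(\varphi))(X^N(s))\,ds$.

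The first step is to show that the boundary terms vanish in $L^2$, which follows from the bound $\E|\pois(\varphi)(\eta)|^2 \lesssim \const^2 \|\varphi\|_{1,2}^2$ (an instance of the computation~\eqref{e:QV}) together with the scaling $\const^2\sim C/\log N$. The marginal convergence of $\tilde\martnl_\cdot(\varphi)$ to a Brownian motion of variance $Q(\varphi)\,t$ is then a verbatim replay of the proof of Proposition~\ref{p:MartConv}: both the expected value and the Wick-type variance of the quadratic variation involve only the one-point marginal of the driving process, which is $\eta$ for $X^N$ just as for $u^N$, so everything transfers without change.

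The genuinely new step is the promotion to joint convergence with $B(\varphi)$ towards a pair of independent Brownian motions. By the multidimensional martingale CLT (e.g.~\cite[Theorem 7.1.4]{EK}), it suffices to show that $\langle\tilde\martnl(\varphi), B(\varphi)\rangle_t \to 0$ in probability. A direct Fourier computation yields
\begin{equation*}
\langle\tilde\martnl(\varphi), B(\varphi)\rangle_t = \int_0^t Y(X^N(s))\,ds, \qquad Y(\eta) = \sum_{k\in\Z^2_0} |k|^2 \varphi_k\,(D_k \pois(\varphi))(\eta),
\end{equation*}
so that $Y$ lies in the first Wiener chaos. The plan is to invoke the It\^o trick (Lemma~\ref{lem:ito-trick}) with the first-chaos solution $F' = \sint_1(h)$ of $\gensy F' = Y$, obtained by dividing the Fourier coefficients of the kernel of $Y$ by $-\tfrac{1}{2}|m|^2$. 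This should produce an upper bound of the form
\begin{equation*}
\Exp\Big[\Big(\int_0^t Y(X^N(s))\,ds\Big)^2\Big] \lesssim t\,\const^2\,\|\varphi\|_{1,2}^2\,\|\varphi\|_{0,2}^2,
\end{equation*}
which vanishes under $\const^2\sim C/\log N$.

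The main obstacle will be verifying that the Fourier sum $\sum_m |\hat g_m|^2/|m|^2$ entering $\energy(F')$ is bounded uniformly in $N$, i.e.\ that no logarithmic divergence compensates the gain from $\const^2$. Using $|\nonlin_{-k,m}|\leq|k-m|$ and the change of variables $\ell=k-m$, this will reduce to the two-dimensional lattice estimate $\sum_k |k|^2/(|k-\ell|^2(|k|^2+|k-\ell|^2)^2) \lesssim |\ell|^{-2}$, which cancels the $|\ell|^2$ coming from the assumption $\varphi\in H^1$ and leaves a summable expression. In contrast to the non-triviality estimates of Section~\ref{S:nontrivialty}, no logarithmic cancellation at the integrand level is required here; the $\const^2$ prefactor alone suffices once the Fourier sum is shown to be finite, which is why this corollary is strictly simpler than the lower-bound analysis that preceded it.
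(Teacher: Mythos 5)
Your decomposition of $\inonlin_t[X^N](\varphi)$ via It\^o/Dynkin applied to the Poisson solution $\pois(\varphi)$, the vanishing of the boundary terms through $\E[\pois(\varphi)(\eta)^2]\lesssim\const^2\|\varphi\|_{1,2}^2$, and the transfer of Proposition~\ref{p:MartConv} (which only uses the one-time marginal $\eta$) are exactly the paper's argument. Where you genuinely diverge is the independence step: the paper argues softly through the chaos structure --- $\inonlin_t[X^N](\varphi)$ lives in the second homogeneous Wiener chaos of $(\eta,\xi)$, hence is uncorrelated with the first-chaos variable $B_t(\varphi)$, and since its limit is Gaussian, uncorrelatedness upgrades to independence --- whereas you prove a joint functional CLT for the pair of martingales by showing the cross bracket $\langle\tilde\martnl(\varphi),B(\varphi)\rangle_t=\int_0^t Y(X^N(s))\,\dd s$ vanishes, with $Y$ a first-chaos functional carrying a factor $\const$. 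Your route is more quantitative and arguably more self-contained (the paper's ``uncorrelated $+$ Gaussian limit $\Rightarrow$ independent'' step implicitly leans on the joint-Gaussianity of chaos limits), at the price of an extra Fourier estimate; the paper's route avoids any computation on the cross bracket.

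Two caveats on your quantitative step. First, the It\^o trick is not needed there: by stationarity and Cauchy--Schwarz, $\Exp[(\int_0^tY(X^N(s))\dd s)^2]\le t^2\,\E[Y(\eta)^2]=t^2\sum_m|\hat g_m|^2$, and since $\hat g_m$ carries the prefactor $\const$ it suffices that $\sum_m|\hat g_m|^2$ be bounded uniformly in $N$ --- there is no logarithm to beat, exactly as you observe. Second, your stated bounds are looser than they look: the lattice estimate $\sum_k |k|^2/(|k-\ell|^2(|k|^2+|k-\ell|^2)^2)$ picks up an extra $\log(2+|\ell|)$ from the region $|k-\ell|\le|\ell|$, and the final bound cannot be closed as $\|\varphi\|_{1,2}^2\|\varphi\|_{0,2}^2$ by Young's inequality alone (the endpoint $\ell^2\ast\ell^2\to\ell^2$ fails). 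For a fixed $\varphi\in H^1(\T^2)$ this is harmless: since $|k|\hat\varphi_k\in\ell^2$ implies $\hat\varphi\in\ell^p$ for every $p>1$ in $d=2$, one bounds $|\hat g_m|\lesssim\const\,( a\ast|\hat\varphi|)(m)$ with $a_j=|j||\hat\varphi_j|$ and closes the sum with a constant depending on $\varphi$ but not on $N$, which is all that is required. With that repair your argument is complete.
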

\begin{proof}
Let $\varphi\in H^1$ and $\pois[\eta](\varphi)$ be the solution of the Poisson equation determined in~\eqref{e:solPoiNL}. 
Notice that the generator of the process $\she$ is $\gensy$ which coincides with $\gens$ once we choose $\donst=1$. 
$\pois(\varphi)$ is a cylinder function and therefore
(for $N$ fixed) belongs to the domain of $\gensy$. By Dynkin's formula we have
\begin{equ}
\pois[X(t)](\varphi)=\pois[X(0)](\varphi)+\int_0^t \gensy \pois[X(s)](\varphi)\dd s+M_t^N(\varphi)
\end{equ}
where the martingale $M_t^N(\varphi)$ has the {\it exact same expression} as $\martnl$ (to see this, apply It\^o's formula 
to $\pois[X(t)](\varphi)$). From the previous we deduce
\begin{equ}
\int_0^t \const\cN^N[X(s)](\varphi)\dd s=\pois[X(0)](\varphi)-\pois[X(t)](\varphi)-M_t^N(\varphi)
\end{equ} 
so that it suffices to study the terms appearing at right hand side. Since the proof of Proposition~\ref{p:MartConv} 
does not depend on the law of $u^N$ as a process, but only on its invariant measure, we conclude that 
$\{M_t^N(\varphi)\}_{t\geq 0}$ converges to a Brownian motion with the covariance prescribed by the statement. 
Concerning the boundary conditions notice that
\begin{equ}
\Exp[\pois[X(t)](\varphi)^2]=\Exp[\pois[X(0)](\varphi)^2]=\E[\pois[\eta](\varphi)^2]
\end{equ}
since $X$ is started at the invariant measure. But now, by Wick's theorem we have 
\begin{equation}\label{e:Pois0}
\E[\pois[\eta](\varphi)^2]\lesssim \const^2\sum_k|k|^2\sum_{\ell+m=k}\frac{1}{(|\ell|^2+|m|^2)^2}|\varphi_{k}|^2\lesssim \const^2\|\varphi\|_{1,2}^2
\end{equation}
which converges to $0$. Collecting the observations made so far, we see that $\{\inonlin_t[X^N](\varphi)\}_{t\geq 0}$ converges 
to a Brownian motion with the covariance prescribed by the statement. 

Independence is a consequence of the fact that for any $N$, $\inonlin_t[X^N](\varphi)$ belongs to the second 
homogeneous Wiener chaos associated to $\xi$ and $\eta$ together. Hence, $\inonlin_t[X^N](\varphi)$ and $\xi$ are uncorrelated, 
and, since the former is bounded in $L^p$, also their covariance converges to $0$. Since the limit of $\inonlin_t[X^N](\varphi)$
is Gaussian and it is uncorrelated from $\xi$, the two are independent. 
\end{proof}

\begin{remark}
The interest in the previous corollary is twofold. First, it provides an example of a situation in which a deterministic
ill-posed operation (in this case the AKPZ nonlinearity $\cN^N$) when suitably rescaled and evaluated at a Gaussian measure, 
produces a {\it new} noise {\it independent} from the one we started with. 
Similar phenomena are observed in situations in which the nonlinearity becomes {\it critical} (in terms of regularity) 
for the equation, and have been observed also in the context of the Isotropic KPZ equation, see~\cite{CSZ, Gu}. 

On a different note, Corollary~\ref{cor:Cherry}, provides a purely probabilistic construction of the first (relevant) stochastic process
one would need to analyse in the context of Regularity Structures~\cite{Hai} and (one of) the reason why the theory 
is not expected to work
if applied to the $2$ dimensional (A)KPZ. Indeed, without entering the details, the approach is based on 
the ability of performing a partial expansion of the solution around the solution of the linearised equation, in which 
the terms appearing can be obtained via a Picard iteration and are increasingly more regular. The expansion is {\it partial} since 
from some point on there is no more gain in regularity and a deterministic argument needs to be invoked in order 
to conclude the fixed point. 

Now, the problem here is that the Picard iteration does not provide any gain in regularity, 
and consequently there is also no point where one could stop. 
Hence, one would end up with an infinite series of stochastic processes
each of which could in principle (as it happens for the so-called ``cherry'') converge to a new white noise, potentially independent 
of the others and there is no hope 
for such a series to be summable!
\end{remark}

\begin{appendix}

\section{An alternative proof of~\eqref{e:simmetry}}\label{sec:alter}

It suffices to notice that the right hand side of~\eqref{e:simmetry} is simply the scalar product of the non-linearity evaluated at 
$\eta$, i.e. $\cN^N(\eta)$, and $\eta$ itself. Setting $\mu\eqdef (-\Delta)^{-\frac{1}{2}}\eta$ we have
\begin{align*}
\sum_{m,l \in \Z_0^2} &\nonlin_{m,l} \eta_m \eta_l \eta_{-m-l}=\langle \cN^N(\eta),\eta\rangle=\langle (-\Delta)( (\partial_1\mu)^2- (\partial_2\mu)^2),\mu\rangle\\
&=\sum_{i=1}^2 \langle \partial_i (\partial_i\mu)^2,\partial_i\mu\rangle 
+ \sum_{\substack{i,j\in\{1,2\}\\i\neq j}}(-1)^i\langle \partial_i(\partial_j\mu)^2,\partial_i\mu\rangle\\
&=\frac{1}{3} \sum_{i=1}^2 \langle \partial_i (\partial_i\mu)^3,1\rangle + 
2 \sum_{\substack{i,j\in\{1,2\}\\i\neq j}}(-1)^i\langle \partial_i\mu\,\partial_j\mu \,\partial_{i,j}\mu),1\rangle
\end{align*}
from which we see that the first sum is $0$ since each summand is, while the second sum vanishes because the two summands 
are the same but they have opposite sign. 

\section{Laplace transform and short time behaviour}\label{ap:HLTT}

In this appendix, we provide a proof of the following Lemma. 

\begin{lemma}\label{lem:ST}
Let $f:\R_+\to\R_+$ be a continuous non negative function. Assume there exist two strictly positive constants, $c< C$, such that
\begin{align}
f(t)&\leq Ct  &\text{for all $t\geq0$}\label{e:UB}\\
\int_0^\infty e^{-\lambda t} f(t)\dd t&\geq \frac{c}{\lambda^2} &\text{for all $\lambda>0$}\label{e:LB}
\end{align}
then,  $f(t)\sim t$ as $t$ converges to $0$. 
\end{lemma}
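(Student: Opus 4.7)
The plan is to reduce the lemma to a statement about the asymptotic behaviour of the ratio $\psi(t) := f(t)/t$ as $t \to 0^+$. Interpreting ``$f(t) \sim t$'' as the conjunction of the upper estimate $\psi(t) \leq C$ (immediate from~\eqref{e:UB}) and a strictly positive lower estimate $\limsup_{t \to 0^+} \psi(t) \geq c$, the content of the lemma is precisely the latter, which is what I will establish.

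The key step is a rescaling inside the Laplace integral. Substituting $s = \lambda t$ yields the identity
\[
\lambda^2 \int_0^\infty e^{-\lambda t} f(t) \dd t = \int_0^\infty e^{-s}\, s\, \psi(s/\lambda)\dd s,
\]
so that hypothesis~\eqref{e:LB} becomes
\[
\int_0^\infty e^{-s}\, s\, \psi(s/\lambda)\dd s \geq c \qquad \text{for every }\lambda > 0.
\]
Hypothesis~\eqref{e:UB} provides the uniform domination $e^{-s}\, s\, \psi(s/\lambda) \leq C s\, e^{-s}$, with $\int_0^\infty C s\, e^{-s}\dd s = C < \infty$, so the reverse Fatou lemma applies as $\lambda \to \infty$. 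Since for each fixed $s > 0$ one has $s/\lambda \downarrow 0$, the pointwise $\limsup$ equals $L := \limsup_{t\to 0^+} \psi(t)$, a constant in $s$, and we obtain
\[
c \leq \limsup_{\lambda \to \infty} \int_0^\infty e^{-s}\, s\, \psi(s/\lambda)\dd s \leq L \int_0^\infty s\, e^{-s}\dd s = L.
\]

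This shows $\limsup_{t\to 0^+} f(t)/t \geq c$, i.e.\ there exists a sequence $t_n \downarrow 0$ along which $f(t_n)$ is of order $t_n$; combined with the uniform upper bound $f(t) \leq Ct$, this is the quantitative statement underlying ``$f(t) \sim t$ as $t \to 0$'', and it is what feeds the non-trivial energy argument in the proof of Proposition~\ref{p:LimitPoints} (via a partition supported on such a sequence together with stationarity of $\cB[u](\varphi)$). I do not foresee any real obstacle: the only nontrivial ingredient beyond elementary measure theory is the pointwise upper bound $f(t) \leq Ct$, which is precisely what legitimises the domination needed for reverse Fatou.
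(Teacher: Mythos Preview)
Your proof is correct and reaches the same conclusion as the paper (namely $\limsup_{t\to 0^+}f(t)/t>0$), but the route is different. The paper argues by contradiction: assuming $\limsup_{t\to 0}f(t)/t=0$, it splits $\int_0^\infty e^{-\lambda t}f(t)\,\dd t$ at the point $a/\lambda$, bounds the near-zero part by $\eps/\lambda^2$ and the tail by $C(a+1)e^{-a}/\lambda^2$, and then chooses $\eps$ and $a$ so that the sum falls below $c/\lambda^2$. Your argument replaces this splitting by the change of variables $s=\lambda t$ and a single application of reverse Fatou, which is slicker and yields the sharper quantitative conclusion $\limsup_{t\to 0^+}f(t)/t\ge c$ rather than merely $>0$. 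One minor remark: for the sequential version of reverse Fatou you only need $\limsup_{n}\psi(s/\lambda_n)\le L$ pointwise, which is immediate; the statement that the pointwise limsup \emph{equals} $L$ is true for the continuous parameter but not needed.
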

\begin{proof}
Notice that it suffices to prove that $\limsup_{t\to 0} t^{-1} f(t)>0$. 
We argue by contradiction. Assume $\lim_{t\to 0} t^{-1} f(t)=0$, 
so that for every $\eps>0$ there exists $\delta(\eps)>0$ such that for all $t\leq \delta(\eps)$,
$f(t)<\eps t$. Let $\eps>0$ (to be fixed later) and $a,\lambda>0$ be such that $a/\lambda\leq \delta(\eps)$. Then, 
an easy computation shows that
\begin{equ}
\int_0^{a/\lambda} e^{-\lambda t} f(t)\dd t\leq \eps \int_0^{a/\lambda} e^{-\lambda t} t\dd t 
=\frac{\eps (1-a e^{-a}-e^{-a})}{\lambda^2}\leq \frac{\eps}{\lambda^2}
\end{equ}
while, by~\eqref{e:UB}, we have
\begin{equ}
\int_{a/\lambda}^\infty e^{-\lambda t} f(t)\dd t\leq C \int_{a/\lambda}^\infty e^{-\lambda t} t\dd t=C\frac{(a+1)e^{-a}}{\lambda^2}
\end{equ}
which implies
\begin{equ}
\int_{0}^\infty e^{-\lambda t} f(t)\dd t\leq \frac{\eps+C(a+1)e^{-a} }{\lambda^2}\,.
\end{equ}
But now, if we choose $\eps= c/4$, where $c$ is the constant in~\eqref{e:LB}, and $a$ and $\lambda$ sufficiently large
so that $a/\lambda<\delta(c/4)$ and $C (a+1)e^{-a} <c/4$, then, by~\eqref{e:LB}, we obtain the desired contradiction. 
\end{proof}

\end{appendix}

\bibliography{bibtex}
\bibliographystyle{Martin}

\end{document}